\documentclass[11pt,reqno]{amsart}
\usepackage{fullpage}
\usepackage{amsmath}
\usepackage{amsfonts}
\usepackage{amssymb}
\usepackage{amssymb}
\usepackage{graphicx}
\usepackage{mathrsfs}
\usepackage{epsfig}
\usepackage{amsthm}
\usepackage{verbatim}
\usepackage{graphicx}
\usepackage{color}
\usepackage{url}

\usepackage[colorlinks,citecolor=black,linkcolor=black,
            bookmarksopen,
            bookmarksnumbered
           ]{hyperref}

\usepackage{color}

 \newcommand{\red}{\color{black}}
 \newcommand{\black}{\color{black}}

 \newcommand{\beq}{\begin{equation}}
 \newcommand{\eeq}{\end{equation}}
 \newcommand{\e}{\mathrm{e}}

  \newcommand{\Osc}{\text{Osc}}
  
    \newcommand{\oD}{\overline{\mathcal{L}}}
    
        \newcommand{\cnab}{\langle \nabla\rangle_m}

\newtheorem{theorem}{Theorem}

\newtheorem{lemma}[theorem]{Lemma}
\newtheorem{cor}[theorem]{Corollary}
\newtheorem{proposition}[theorem]{Proposition}

\theoremstyle{definition}

\newtheorem{ass}{Assumption}[section]

\newtheorem{rem}[theorem]{Remark}

\author{Fr\'ed\'eric Rousset}
\address{Laboratoire de Math\'ematiques d'Orsay (UMR 8628), Universit\'e Paris-Saclay,  91405 Orsay Cedex, France (F. Rousset)}
\email{frederic.rousset@universite-paris-saclay.fr}

\author{Katharina Schratz}
\address{LJLL (UMR 7598), Sorbonne Universit\'e, UPMC, 4 place Jussieu, 75005, Paris, France (K. Schratz)}
\email{katharina.schratz@ljll.math.upmc.fr}

\begin{document}
\begin{abstract}
We introduce a new general framework for the approximation of evolution equations at low regularity
 and  develop a new class of schemes for a  wide range of equations under  lower regularity assumptions than classical methods require.  In contrast to previous works, our new framework allows a \emph{unified} practical formulation and  the construction of the new schemes does not rely on any Fourier based expansions. This allows us  for the first time to overcome  the severe restriction to periodic boundary conditions, to embed in the same framework parabolic and dispersive equations and to handle
 nonlinearities that are not polynomial. In particular, as our new formalism does no longer require periodicity of the problem, one may couple the new  time discretisation technique not only with spectral methods, but rather with various spatial discretisations.  
  We  apply our general theory to the time discretization of various concrete PDEs, such as the nonlinear  heat equation, the  nonlinear Schr\"odinger equation, the complex Ginzburg-Landau equation, the half wave and Klein--Gordon equations,  set 
 in  $\Omega \subset \mathbb{R}^d$, $d \leq 3$  with suitable boundary
conditions.
\end{abstract}

\title[]{A general framework of low regularity integrators}

\maketitle

\section{Introduction}
We consider a general class of evolution equations under the form
\begin{equation}\label{ev}
 \partial_t u - \mathcal{L} u = f(u,\overline u) \qquad (t,x) \in \mathbb{R}\times \Omega
\end{equation}
with 
$\Omega \subset \mathbb{R}^d$. We add an initial condition
\begin{equation}
\label{init}
u_{/t=0}= u_{0}
\end{equation}
 and when $\partial \Omega \neq \emptyset$ some appropriate homogeneous  boundary conditions.
  The unknown $u$ can be complex valued or real-valued. The precise assumptions for  the general problem \eqref{ev} will  be given in Section \ref{introAss} and concrete examples that  fit the assumptions are  illustrated in Section \ref{introEx}.

In the last decades, a large variety of  discretisation techniques  was introduced  for the time resolution of evolution equations of type \eqref{ev} reaching from splitting  methods over exponential   integrators up to Runge--Kutta and Lawson   schemes \cite{Faou12,HNW93,HLW,HochOst10,HLO20,HLRS10,LR04,McLacQ02,SanBook}.  
 While  such \emph{classical} discretisation techniques  in general  provide  a good approximation to smooth solutions of partial differential equations (PDEs),   they often drastically break down at low regularity:   rough  data and high oscillations can cause  severe order reduction leading to loss of convergence and huge computational costs.

Lack of smoothness  is thereby mostly  negligible for parabolic problems:  thanks to the parabolic smoothing property the solution is regularised away from time $t =0$ such that classical schemes in general provide a good approximation  as time evolves.  This stands in stark contrast to the dispersive setting where no pointwise smoothing can be expected. Rough (or oscillatory)  data spreads in time and in space  which causes the breakdown of classical approximation techniques.   The control of nonlinear terms in dispersive PDEs at low regularity is thus an ongoing challenge in computational mathematics at large.


Recently, a new type of discretisation (so-called resonance based discretisation) was introduced  for various dispersive equations with periodic boundary conditions (that is $\Omega = \mathbb{T}^d$) when the linear part is defined by a  differential operators $\mathcal{L} =\sum_{i=1}^d \partial_{x_i}^n$ with $n \in \mathbb{N}$.  First   for  the periodic Korteweg--de Vries equation (\cite{HS16}), then for periodic Schr\"odinger equations  (\cite{OS18})   and lately for other periodic dispersive equations such as Boussinesq  (\cite{OstSu19}) and Dirac equations (\cite{SWZ20}). The main idea behind the new discretisation technique is  the following:  Instead of  discretising the evolution equation \eqref{ev}  directly with classical techniques based on a Taylor series expansion of the exact solution, one embeds the underlying structure of  nonlinear frequency interactions   into the numerical discretisation. The  latter is achieved by expanding the exact solution  into a Fourier series expansion $u(t,x) = \sum_{k \in \mathbb{Z}^d} \hat{u}_k(t) e^{i k x}$.
In the discretisation of the equation written on the  Fourier side  one can then (easily) tackle the  nonlinear interactions of the Fourier modes   and embed their dominant parts into the numerical discretisation. 
In many cases this enables us to solve the dispersive PDE under (much) lower regularity assumptions than required by  classical numerical schemes  which are based on linearised frequency approximations. For an extensive overview on the comparison of this  approach to classical  methods,  we refer to  \cite{BS20}.  

While the novel  approach allowed us to solve dispersive PDEs in a more general setting, i.e., under lower regularity assumptions, its main drawback lies in the fact that each and every equation  has to be carefully analyzed on its own since   the 
nonlinear frequency interaction  strongly \black differs from PDE to PDE.
 With the aid of decorated tree series we could recently show that  a general high-order framework of resonance based 
discretisations does exist \cite{BS20}.  However, in order to obtain a practical implementation  each scheme has to be derived separately through  involved Butcher-Connes-Kreimer Hopf algebras and their corresponding co-products.


Consequently, while  resonance based approaches allow us to solve dispersive PDEs under (much) lower regularity assumptions than classical schemes require,  they   face {three main obstacles}: 
\begin{itemize}
\item[(A)]   \vskip0.05cm They are restricted to  {periodic boundary conditions} due to their strong dependence on the Fourier series expansion of the exact solution.
\item[(B)]   They are limited to  {classical differential operators} of type $\mathcal{L}= \sum_{i=1}^d\partial_{x_i}^n $ with $n \in \mathbb{N}$, e.g., 
$$\mathcal{L} = - \Delta, \partial_x, \partial_x^3, \ldots, $$
in order to involve only  polynomials in the frequencies in the interaction phase and to allow  the dominant parts to be straightforwardly  extracted. The latter, however, excludes important classes of equations, such as  Klein--Gordon, or wave  type systems  which does not  involve polynomials, but rather interactions of square roots of  frequencies (for example for quadratic nonlinearities $\sqrt{(j+k)^2+m^2} - \sqrt{j^2+m^2}- \sqrt{k^2+m^2}$, $\vert j+k \vert - \vert j \vert  -\vert k \vert$, etc).
\item[(C)]  The  {lack of a unified practical formulation} of the schemes which is the main advantage of Runge--Kutta methods, splitting methods, exponential integrators and other classical discretisation techniques.
\end{itemize} \vskip0.1cm

\subsection{Aim of the paper}
 In this work we present a novel framework  of low regularity integrators \black which allows us to overcome the above obstacles. \red
 The central idea lies in embedding the underlying oscillatory structure of  \eqref{ev} into our numerical discretisation (without employing any Fourier based techniques). This is achieved by introducing  {filtered oscillations} (cf. \eqref{filtG}) which allow us to treat the dominant oscillations, triggered by the operator $-\mathcal{L} + \overline {\mathcal{L}}$ exactly, while only approximating the lower order parts by a {stabilised} Taylor series expansion. \black This framework yields a new class of schemes that can be used   on a wide range of equations under  lower regularity assumptions than classical methods require.  \black The new framework \black allows for a unified practical formulation (C) while still embedding the central oscillations in the discretisation. The construction of the new schemes does not rely on any Fourier based expansions. This will allow us to solve a large class of equations at low regularity without requiring periodicity of the problem (A). In particular,  as the spatial domain is no longer restricted to periodicity, one may couple the new  time discretisation technique not only with spectral methods, but rather with various spatial discretisations (e.g., finite differences, finite element methods). \red The practical   implementation on general domains will require, as for classical splitting and exponential integrators methods, suitable Krylov space methods in order to approximate the exponential $e^{t\mathcal{L}}$, respectively, the action of the $\varphi-$ functions.  \black  A fully discrete analysis with various spatial discretisation methods is plan of future research. In addition of overcoming periodicity, the new general framework  also allows us for the first time  to   deal in the same formulation  with parabolic and   dispersive or  hyperbolic  problems (B). The latter will open up this new low regularity time discretisation  framework to a larger class of equations   including for instance  heat equations, reaction-diffusion problems and wave-type systems. Furthermore, in contrast to previous works, our framework is no longer restricted to strict polynomial nonlinearities and we can treat for instance the Sine--Gordon equation with our new framework.  We   carry out   an overarching \emph{abstract} convergence analysis  for our new class of schemes in the general setting~\eqref{ev}. The  abstract error bounds we establish apply to a lot of examples.  We illustrate them for  nonlinear heat,  Schr\"odinger and Ginzburg-Landau type equations as well as  the half wave,  Klein--Gordon and wave equations.\\

The new first- and second-order schemes together with their main convergence result  are presented in Section \ref{sec:duhi}. In Section \ref{sec:1} and \ref{sec:2} we carry out an expansion of the underlying oscillations up to first- and second-order, respectively. These expansions together with their improved local error structure motivate  the new numerical schemes. We focus on first- and second-order methods. However, our framework can   be extended to higher order.   The   error analysis at first- and second-order is presented in Section \ref{locErr1} and \ref{locErr2}, respectively. Finally,  in Section \ref{sectionexamples} we illustrate our convergence analysis on various examples.  

 \subsection{Assumptions}\label{introAss}
  The linear  operator $ \mathcal{L}$ is defined on a Hilbert space
   $X$  of complex valued  functions $u \in \mathbb{C}$ with norm denoted by $\| \cdot \|$ and domain $\mathcal{D}(\mathcal{L})$.
    We assume that the complex conjugation $u \mapsto \overline{u}$ is an isometry on $X$.
     When $\partial \Omega \neq 0$, the boundary conditions
   will be  encoded in the choice of the domain of the operator $\mathcal{L}$.  We shall also use  the operator $ \overline{\mathcal{L}}$
    defined by $  \overline{\mathcal{L}} u = \overline{ \mathcal{L} \overline{u}}.$
    In the following we assume that the  nonlinearity  $f$ is tensorized  under the form
\begin{equation}
\label{nonlin}
f(v, w) = \mathcal{B}\left(F(v) \cdot G(w)\right), \quad  F, \, G:\, \mathbb{C} \rightarrow \mathbb{C}^J, 
\end{equation}
where we use  the  notation 
 $ X\cdot Y = \sum_{k}X_{k}Y_{k}, \quad X, \, Y \in \mathbb{C}^J$, and    $\mathcal{B}$ is a linear operator.

\begin{ass}\label{ass1}
Our main assumptions on $\mathcal{L}$ are the following 
\begin{enumerate}
\item[i)] $ \mathcal{L}$ generates a strongly continuous semigroup $\{e^{t \mathcal{L}}\}_{t\geq 0}$ of contractions  on $X$;
\item[ii)] $\mathcal{A}= -\mathcal{L}+\overline{\mathcal{L}}$ generates a group $\{e^{t \mathcal{A}}\}_{t\in \mathbb{R}}$  of unitary operators on X;
\item[iii)] $\mathcal{L}$ and $\overline{\mathcal{L}}$ commute: $[\mathcal{L}, \overline{\mathcal{L}} ] = 0 $.
\end{enumerate}
\end{ass}
The above assumptions allow us to deal in an unified framework  with parabolic,  dispersive as well as mixed equations, we shall give examples in Section \ref{introEx} below.
Note that the second assumption is automatically verified  if $\mathcal{A}= 0$ so in particular when $\mathcal{L}$ is real
 for real functions.
 
 The consequences of the above assumptions that we shall use are the following. Since $\{e^{t \mathcal{L}}\}_{t\geq 0}$
  is  a strongly continuous semigroup of contractions \red it holds that\black
   $$\red \| e^{t \mathcal{L}}  \|_{\mathcal{L}(X)} \leq  1.$$
    We can  define powers of $\mathcal{L}$, $\mathcal{L}^\alpha$ for $\alpha \in \mathbb{R}$ and we shall denote 
     the domain  $\mathcal{D}(\mathcal{L}^\alpha)$ by $X^\alpha$ (with the convention that $X^0= X$), the graph norm will be denoted 
      for $\alpha \geq 0$ by 
     $$ \|u\|_{\alpha}= \|u\| + \|\mathcal{ L}^{\alpha} u \|.$$
  We could  also deal  with general strongly continuous semigroups which then satisfies
  $$ \| e^{t \mathcal{L}} u \| \leq  M e^{ \omega t}$$
  for some $\omega \geq 0$ but this is not needed for the examples that we shall study.

\begin{ass}\label{assnon}
 To handle the ``nonlinearity" $f$, we assume that 
 there exists $\, a_{1}>a_{0} \geq 0$ such that   for every $\alpha \in [a_{0}, a_{1}],$
  \begin{equation}
  \label{picard}  \|f(u, \overline{u}) \|_{\alpha} \leq C_{\alpha}( \|u\|_{a_{0}}) \|u \|_{\alpha} , \quad 
         \| f(u,\overline{u})-  f(v,\overline{v}) \|_{a_{0}} \leq  C_{a_{0}}( \|u\|_{a_{0}}, \|v\|_{a_{0}})  \|u-v \|_{a_{0}}
         \end{equation}
         where $C_{\alpha}( \cdot)$ denotes a  continuous non-decreasing functions of its arguments. 
         
 \red{ In the above assumption, $a_{0}$ is the minimal regularity needed to have a standard well-posedness result, while
  $a_{1}$ stands for the maximal regularity that we can propagate without imposing further compatibility conditions. 
  The threshold $a_{1}$ will be meaningful only when we consider domains with boundaries, on the whole space  or the torus
   $a_{1}$ can be arbitrarily large.}

\end{ass}

%


\subsection{Examples}\label{introEx}

The main examples which will be covered by the above framework are 
\begin{itemize}
\item Nonlinear heat equations
$$ \partial_t u  -  \Delta u =f(u,\overline u),\quad \text{i.e., }\quad \mathcal{L}=\Delta,  \, \mathcal{A}= 0,$$
where $f$ is  sufficiently smooth\black
\item Nonlinear Schr\"odinger equations
$$
 i \partial_t u +  \Delta u = \pm \vert u\vert^{2m} u , \,m \in \mathbb{N}\black\quad \text{i.e., }\quad  \mathcal{L}= i \Delta, \,  \mathcal{A}= -  2 i \Delta , \,     \quad f(u,\overline u) = \pm  i u^{m+1 } \overline u^{m}.$$
 \item  Complex Ginzburg Landau equations
\begin{align*} & \partial_t u  -  \alpha  \Delta u = \gamma  u (1 - |u|^2), \,  \alpha, \, \gamma  \in \mathbb{C}, \mbox{Re }\alpha \geq 0, \\  &\text{i.e., }\quad \mathcal{L}=\alpha\Delta, \,  \, \mathcal{A}=  - 2 i \mbox{Im } \alpha \Delta, \, f(u, \overline{u})= \gamma  u (1 - u \overline{u})
\end{align*}
\item Half wave equation
$$ i \partial_t u + \vert \nabla \vert u =  \pm |u|^2 u,\quad \text{i.e., }\quad \mathcal{L}= i \vert \nabla \vert, \,   \quad f(u,\overline u) = \pm u^2 \overline{u}$$
\item  Klein--Gordon and wave-type equations reformulated as first-order systems
$$  \partial_{tt} u -  \Delta u  +  m^2 u =  f(u) $$
with $f$ smooth enough, for example, $f(u)= u^p$ or   Sine-Gordon equations  $f(u) = \mathrm{sin}(u)$.
\end{itemize}
We shall apply our general theory to the time discretization of the above partial differential equations set 
 in  $\Omega \subset \mathbb{R}^d$, $d \leq 3$  with suitable boundary
conditions. This will be the aim of the last part of the paper, see Section \ref{sectionexamples}.

\section{Main results}\label{sec:duhi}
We shall deal with mild solutions of \eqref{ev}, \eqref{init} which satisfy the Duhamel formula
\begin{align}\label{duh}
u(t) = e^{ t \mathcal{L}} u_0 +   
\int_0^t e^{ (t-  \xi) \mathcal{L}}  f \left(u(\xi), \overline u(\xi) \right) d\xi.
\end{align}
By using  Assumptions \ref{ass1}, \ref{assnon}, we can easily get from a Banach fixed point argument the following local existence result:
\begin{theorem}
\label{localwp}
Under the Assumptions \ref{ass1}, \ref{assnon}, for every $u_{0} \in X^{a_{0}}$, there exists $T>0$ and a unique solution
 $u \in \mathcal{C}([0, T], X^{a_{0}})$ solution of \eqref{duh}. Moreover if $u_{0} \in X^{\alpha}$, $\alpha \in (a_{0}, a_{1}]$, then
we also have   $u \in \mathcal{C}([0, T], X^{\alpha}).$
\end{theorem}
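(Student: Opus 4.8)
The plan is to realise the mild solution as the fixed point of the Duhamel map
\begin{equation*}
\Phi(u)(t) = e^{t\mathcal{L}} u_0 + \int_0^t e^{(t-\xi)\mathcal{L}} f\!\left(u(\xi), \overline{u}(\xi)\right) \dd \xi
\end{equation*}
and to run a standard contraction argument in the Banach space $\mathcal{C}([0,T], X^{a_0})$ for $T$ small. First I would set $R = 2\|u_0\|_{a_0}$ and work in the closed ball $B_R = \{ u \in \mathcal{C}([0,T], X^{a_0}) : \sup_{t\in[0,T]} \|u(t)\|_{a_0} \le R\}$.

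The two ingredients are the contraction property of the semigroup and Assumption \ref{assnon}. Since $e^{t\mathcal{L}}$ is generated by $\mathcal{L}$ it commutes with $\mathcal{L}^{a_0}$, so $\|e^{t\mathcal{L}}\|_{\mathcal{L}(X)} \le 1$ upgrades to $\|e^{t\mathcal{L}} v\|_{a_0} \le \|v\|_{a_0}$, i.e. the semigroup is a contraction on $X^{a_0}$ as well. Combined with the first bound in \eqref{picard} at $\alpha = a_0$ this gives
\begin{equation*}
\|\Phi(u)(t)\|_{a_0} \le \|u_0\|_{a_0} + \int_0^t C_{a_0}(\|u(\xi)\|_{a_0}) \|u(\xi)\|_{a_0} \dd\xi \le \|u_0\|_{a_0} + T\, C_{a_0}(R)\, R,
\end{equation*}
so $\Phi$ maps $B_R$ into itself once $T\, C_{a_0}(R)\, R \le \|u_0\|_{a_0}$. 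The second (Lipschitz) bound in \eqref{picard} yields likewise
\begin{equation*}
\sup_{t\in[0,T]} \|\Phi(u)(t) - \Phi(v)(t)\|_{a_0} \le T\, C_{a_0}(R,R) \sup_{t\in[0,T]} \|u(t)-v(t)\|_{a_0},
\end{equation*}
so $\Phi$ is a contraction after further shrinking $T$ so that $T\, C_{a_0}(R,R) \le 1/2$. Time-continuity of $\Phi(u)$ follows from the strong continuity of $t \mapsto e^{t\mathcal{L}} u_0$ and a standard splitting of the Duhamel integral into a short ``tail'' piece and an $(e^{h\mathcal{L}} - I)$ piece, both controlled since $\xi \mapsto f(u(\xi), \overline u(\xi))$ is bounded in $X^{a_0}$. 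Banach's theorem then produces the fixed point $u \in B_R$; uniqueness in the whole of $\mathcal{C}([0,T], X^{a_0})$ (not merely in $B_R$) is recovered by subtracting two solutions, bounding them by a common radius, and closing a Gronwall inequality through the Lipschitz estimate.

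For the persistence of regularity when $u_0 \in X^\alpha$, $\alpha \in (a_0, a_1]$, I would rerun the scheme on the set $E = \{ u : \sup_t \|u(t)\|_{a_0} \le R,\ \sup_t \|u(t)\|_\alpha \le R_\alpha\}$ metrised by the $a_0$-distance, with $R_\alpha$ dictated by a Gronwall bound coming from the first inequality in \eqref{picard} at level $\alpha$: since $\|u(\xi)\|_{a_0} \le R$ one has $\|\Phi(u)(t)\|_\alpha \le \|u_0\|_\alpha + \int_0^t C_\alpha(R) \|u(\xi)\|_\alpha \dd \xi$, and $R_\alpha := \|u_0\|_\alpha e^{C_\alpha(R) T}$ is preserved. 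The contraction estimate is unchanged (it only ever uses the $a_0$-norm), so the fixed point in $E$ exists and, by the uniqueness above, coincides with $u$; this already gives $\sup_t\|u(t)\|_\alpha \le R_\alpha$. A final step upgrades $L^\infty$ in time to continuity: using $u = \Phi(u)$, the map $t\mapsto e^{t\mathcal{L}} u_0$ is continuous in $X^\alpha$ (strong continuity applied to $u_0$ and to $\mathcal{L}^\alpha u_0$), and the Duhamel term is continuous in $X^\alpha$ by the same tail/$(e^{h\mathcal{L}}-I)$ splitting, now using boundedness of the integrand in $X^\alpha$.

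I expect the main obstacle to lie in this regularity-persistence step, for two reasons. First, the Lipschitz bound in \eqref{picard} is available only at the base level $a_0$, so one cannot contract directly in the $X^\alpha$-norm; the remedy is precisely to metrise $E$ by the $a_0$-distance while carrying the $X^\alpha$-bound as a side constraint, which forces one to check that $E$ is \emph{complete} for this metric. Completeness is the delicate point: if $u_n \to u$ in $\mathcal{C}([0,T],X^{a_0})$ with $\sup_t\|u_n(t)\|_\alpha \le R_\alpha$, one must show $u(t) \in X^\alpha$ with the same bound, which I would obtain from the closedness of $\mathcal{L}^\alpha$ together with weak compactness in the Hilbert space $X$ (the graph of a closed operator is weakly closed, and the norm is weakly lower semicontinuous). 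Second, passing from the resulting $L^\infty_t X^\alpha$ bound to genuine continuity in $X^\alpha$ requires the semigroup argument above rather than the $a_0$-level Lipschitz estimate, since $\xi \mapsto f(u(\xi),\overline u(\xi))$ need not be continuous into $X^\alpha$ a priori.
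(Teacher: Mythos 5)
Your proposal takes essentially the same route as the paper: the paper offers no written proof of Theorem \ref{localwp} beyond the remark that it follows from a Banach fixed point argument applied to \eqref{duh}, and that is exactly your contraction in $\mathcal{C}([0,T],X^{a_0})$ (contractivity of $e^{t\mathcal{L}}$ lifted to the graph norm via commutation with $\mathcal{L}^{a_0}$, plus the two bounds of \eqref{picard}), with the persistence of regularity handled in the standard way for tame estimates: contract in the $a_0$-metric while carrying the $X^{\alpha}$-bound as a side constraint, with completeness of the constrained set supplied by closedness of $\mathcal{L}^{\alpha}$ and weak compactness in the Hilbert space $X$, and a final semigroup argument upgrading $L^{\infty}_t X^{\alpha}$ to continuity. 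All of this is sound.

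One quantitative slip should be repaired: the uniform ball $\sup_t \|u(t)\|_{\alpha}\le R_{\alpha}$ with $R_{\alpha}=\|u_0\|_{\alpha}e^{C_{\alpha}(R)T}$ is \emph{not} invariant under $\Phi$. Your own estimate gives $\|\Phi(u)(t)\|_{\alpha}\le \|u_0\|_{\alpha}+T\,C_{\alpha}(R)\,R_{\alpha}$, and since $e^{x}(1-x)\le 1$ for $x\ge 0$ one has $\|u_0\|_{\alpha}+ T C_{\alpha}(R)\, e^{C_{\alpha}(R)T}\|u_0\|_{\alpha}\ \ge\ e^{C_{\alpha}(R)T}\|u_0\|_{\alpha}$, with strict inequality for $T>0$, so the self-mapping claim fails as written: you conflated the Gronwall \emph{conclusion} with ball invariance. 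The fix is one line: replace the uniform constraint by the pointwise-in-time constraint $\|u(t)\|_{\alpha}\le \|u_0\|_{\alpha}e^{C_{\alpha}(R)t}$ for all $t\in[0,T]$, which is exactly invariant because $\|u_0\|_{\alpha}+C_{\alpha}(R)\int_0^t \|u_0\|_{\alpha}e^{C_{\alpha}(R)\xi}\dd\xi=\|u_0\|_{\alpha}e^{C_{\alpha}(R)t}$, and which is still closed under your weak-compactness argument (applied pointwise in $t$); this variant also retains the same $T$ as the $a_0$-level solution, as the theorem asserts. Alternatively, take $R_{\alpha}=2\|u_0\|_{\alpha}$ after further shrinking $T$ so that $T\,C_{\alpha}(R)\le 1/2$, and then extend the $X^{\alpha}$-regularity to the full interval by a Gronwall bootstrap, which is legitimate here because $C_{\alpha}$ depends only on the $a_0$-norm, uniformly bounded on $[0,T]$.
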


The aim of this paper is to introduce low-regularity  integrators for the approximation of  Duhamel's formula~\eqref{duh}. We shall call these schemes \emph{Duhamel integrators}.

\subsection{First-order Duhamel integrator}
 At first-order our new Duhamel integrator   takes the form for $l \geq 0$
\begin{equation}\label{num1Intro}
u^{\ell+1}=  \Phi_{\text{num},1}^\tau({u^\ell}) = e^{\tau \mathcal{L}} \Big( u^\ell +  \tau \mathcal{B} \left( F(u^\ell) \cdot \varphi_1\big(\tau  \mathcal{A}\big) G(\overline {u^\ell})\right)\Big) \quad\text{with} \quad \varphi_1(z) = \frac{e^{z}-1}{z}
\end{equation}
and we set $u^0= u_{0}$.
Let us recall that we have set  $\mathcal{A}= - \mathcal{L}+ \oD$. Thanks to Assumption \eqref{ass1}, Stone's Theorem implies 
 that  $\mathcal{A}$ is under the form $i \mathcal{M}$ with $\mathcal{M}$ self-adjoint. We can therefore define  $\varphi_1\big(\tau  \mathcal{A}\big)$
  by using the functional calculus for self-adjoint operator, this yields a bounded operator since $\varphi_{1}$ is bounded on 
   $i \mathbb{R}$.
  
  It will be convenient to set
  \begin{equation}
  \label{Psinum1}
  \Phi_{\text{num},1}^\tau({u^\ell}) = e^{\tau \mathcal{L}} \Big( u^\ell +  \tau \mathcal{B} \left( F(u^\ell) \cdot \varphi_1\big(\tau  \mathcal{A}\big)G(\overline {u^\ell})\right)\Big)= e^{\tau \mathcal{L}} \left( u^\ell + \tau  \Psi^\tau_{\text{num}, 1}(u^l)\right).
  \end{equation}
Let us define for a function  $H(v_{1}, \cdots v_{n})$, $n \geq 1$ and a linear operator $L$
the commutator type term 
 $ \mathcal{C}[H, L]$    by
\begin{align}\label{DlowIntro}
\mathcal{C}[H, L](v_{1}, \cdots, v_{n})
= 
- L ( H \left( v_{1}, \cdots    v_{n}\right))
+  \sum_{i=1}^n   \,  D_{i} H( v_{1}, \cdots,  v_{n}) \cdot  L v_{i}
\end{align}
where $D_{i} H$ stands for the partial differential of $H$ with respect to the variable $v_{i}.$
The estimate of this type of terms will be crucial in order to estimate the local error of our schemes.
From Assumption \ref{assnon}, if $a_{1}$ can be taken bigger than $1$,  we can get that 
\begin{equation}
\label{trivial}  \|
\mathcal{C}[f, \mathcal{L}] (v, w) \| \leq C( \|v\|_{a_{0}},   \|w\|_{a_{0}}) (\|v\|_{1}+  \|w\|_1).
\end{equation}
Nervertheless, this estimate is very crude since each term in the expression of $\mathcal{C}[f, \mathcal{L}]$
actually satisfies the above estimate. By using this estimate in the analysis of our first-order scheme
we would get first-order convergence in $X$  for data in $X^1$ without any improvement compared to  classical schemes.
However, if  $\mathcal{L}$ is a differential operator of order $m$, we can  get a  better estimate 
 by using the Leibnitz formula which implies that $ \mathcal{C}[f, \mathcal{L}] $ actually involves at most
  only $m-1$ derivatives of $u$ so that a better estimate can be expected.
  If $\mathcal{L}$ is not a differential operator but satisfies a generalized Leibnitz rule, such an improvement
  can also be obtained as we will see on the examples.
  
 We shall now state an  abstract assumption about the estimate of these commutator terms which will fix the needed regularity for the convergence of the new scheme \eqref{num1Intro}. In order to get a result as general as possible, we take some  $\alpha_{0} \in [0, a_{1}]$ and we will measure errors in $X^{\alpha_{0}}$.  

\begin{ass}\label{asscom}
 There exists $\alpha_{1} \in (a_{0}, a_{1}] $ such that for every $v$, $w \in X^{\alpha_{1}}$, 
\begin{align}
\label{C1} &  \|\mathcal{C}[f, \mathcal{L}] (v, w) \|_{\alpha_{0}} \leq C_{\alpha_{0}}( \|v\|_{\alpha_{1}},   \|w\|_{\alpha_{1}}), \\
\label{C2} &  \|\mathcal{B}( F(v) \cdot e^{s \mathcal{A}}\mathcal{C}[G, \mathcal{A}] (w) )\|_{\alpha_{0}} \leq C_{\alpha_{0}}( \|v\|_{\alpha_{1}},   \|w\|_{\alpha_{1}}), 
 \quad \forall s \in \mathbb{R}.
\end{align}
The second estimate is uniform with respect to $s$.
\end{ass}

\red In the above assumption $a_{1}$ stands for the minimal regularity needed to estimate the commutator terms (and thus the local
error as we shall see) in the space that we have chosen to measure the error which is  $X^{\alpha_{0}}$.
\black

Note that if we choose to measure errors in $X$ so that $\alpha_{0}= 0$, then if we can check on a concrete example  that $\alpha_{1}<1 $ we obtain indeed
 an improvement on the trivial estimate \eqref{trivial}. 
Due to the favorable local error structure of our scheme which involves these commutator type terms, this
will allow us to get first-order convergence in $X$  for data in $X^{\alpha_{1}}$ instead of $X^1$.

\red To make our point more concrete let us consider the simple case $\mathcal{L}=  \Delta$ on the torus $\mathbb{T}^d$, 
and $f(u)= u^2$. Note that $X^\alpha = H^{2 \alpha}$ in this case. We  have the explicit formula
$$   \mathcal{C}[f, \mathcal{L}] (v, w)= -2 \sum_{k=1}^d \partial_{k}v \partial_{k} w.$$
  If we choose to measure the error in $L^2$, so that $\alpha_{0}=0$,  we need $ \partial_{k}v \partial_{k} w \in L^2$. If $d \leq 4$, 
   we can take 
   $v,\, w \in W^{1, 4}$ and thus, since  $H^{d\over 4} \subset L^4$, this yields $v\, w \in H^{ 1 + {d \over 4}}$. This means
    that we can take $a_{1}= {1 \over 2} + {d \over 8}$ which is strictly smaller than $1$ for $d \leq 3$.
    \black

Before stating our main result for first-order convergence, our last technical Assumption is the following
\begin{ass}\label{asstech}
Let $\alpha_{0}$, $\alpha_{1}$ be  given as above, we assume that
\begin{equation}
\label{tech0}
  \| f(u,\overline{u})-  f(v,\overline{v}) \|_{\alpha_{0}} \leq  C_{\alpha_{0}}( \|u\|_{a_{0}}, \|v\|_{a_{0}}) \|u-v\|_{\alpha_{0}}, \quad
 \| f(u,\overline{u}) \|_{\alpha_{0}}  \leq C_{\alpha_{0}}(  \|u\|_{\alpha_{1}}),
 \end{equation}
 and that the numerical flux is such that
 \begin{align}
&  \label{tech0bis}
  \| \Psi^\tau_{\text{num,1}} (u) \|_{\alpha_{1}}
   \leq  C_{\alpha_{1}}(  \|u\|_{\alpha_{1}}), \\
\label{tech1}  &   \| \Psi_{\text{num}, 1}^\tau(u) -  \Psi_{\text{num}, 1}^\tau({\red v}) \|_{\alpha_{0}}  
 \leq  C_{\alpha_{0}}( \|u\|_{a_{0}}, \|v\|_{a_{0}} ) \|u- v \|_{\alpha_{0}} \\ 
\label{tech2}   
&  \| \Psi_{\text{num}, 1}^\tau(u) -  \Psi_{\text{num}, 1}^\tau({\red v}) \|_{a_{0}} 
\leq  C_{a_{0}}( \|u\|_{a_{0}}, \|v\|_{a_{0}} ) \|u- v \|_{a_{0}}.
 \end{align}
\end{ass}
Note that we actually need both  \eqref{tech1}, \eqref{tech2} only when $\alpha_{0}< a_{0}$. 
{\red This assumption
 will be used  to prove boundedness for the numerical solution in $X^{a_{0}}$ and convergence in $X^{\alpha_{0}}$.}

The global error estimate for the first-order Duhamel integrator \eqref{num1Intro} then takes the form.
\begin{theorem} 
\label{theo1}
\red Let   Assumptions \ref{ass1},  \ref{assnon} hold, let us choose some  $\alpha_{0}\in [0, a_{1}]$ 
such that Assumptions  \ref{asscom}, \ref{asstech} for some $\alpha_{1} \in (a_{0}, a_{1}]$
 hold. \black Then, 
 for every  $u_{0} \in X^{\alpha_{1}}$, $\alpha_{1}$ given by Assumption \ref{asscom}, let $T>0$ and $u \in \mathcal{C}([0, T], X^{\alpha_{1}})$
  the unique solution of \eqref{duh} given by Theorem \ref{localwp}, let  $u^n$ denote the numerical solution given by \eqref{num1Intro}.
   Then there exists $C_{T}>0$ such that
 $$ \|u(n \tau) - u^n \|_{\alpha_{0}} \leq C_{T} \tau, \quad 0 \leq n \tau \leq T.$$
\end{theorem}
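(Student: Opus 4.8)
The plan is to establish Theorem~\ref{theo1} by the classical Lady Windermere (telescoping) argument for one-step methods: bound the \emph{local} (one-step) error of the Duhamel integrator, prove a \emph{stability} estimate for the numerical flow $\Phi^\tau_{\mathrm{num},1}$, and combine the two through a discrete Gronwall inequality. Setting $e^n=u(n\tau)-u^n$ and feeding the exact solution into one step, I would use
\[
 e^{n+1}=\underbrace{\big(u(t_{n+1})-\Phi^\tau_{\mathrm{num},1}(u(t_n))\big)}_{\text{local error }\mathcal E^n}+\underbrace{\big(\Phi^\tau_{\mathrm{num},1}(u(t_n))-\Phi^\tau_{\mathrm{num},1}(u^n)\big)}_{\text{stability}} ,
\]
with $t_n=n\tau$. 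The exact solution is bounded in $X^{\alpha_1}$ on $[0,T]$ by Theorem~\ref{localwp}, so the local error will be evaluated at a bounded argument, and the whole scheme is driven by contraction semigroups (Assumption~\ref{ass1}), which is what keeps the accumulation of errors from amplifying.

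The heart of the argument is the local error $\mathcal E^n$. Using the Duhamel formula~\eqref{duh} on $[t_n,t_{n+1}]$ together with the identity $\tau\varphi_1(\tau\mathcal A)=\int_0^\tau e^{\xi\mathcal A}\,d\xi$, the numerical nonlinear term becomes an integral directly comparable to Duhamel's, so that $\mathcal E^n=\int_0^\tau D(\xi)\,d\xi$ with
\[
 D(\xi)=e^{(\tau-\xi)\mathcal L}f\big(u(t_n+\xi),\overline{u(t_n+\xi)}\big)-e^{\tau\mathcal L}\mathcal B\big(F(u(t_n))\cdot e^{\xi\mathcal A}G(\overline{u(t_n)})\big).
\]
First I would pass to the interaction picture, replacing $u(t_n+\xi)$ by its free flow $e^{\xi\mathcal L}u(t_n)$; by the Duhamel remainder bound and the $X^{\alpha_0}$-Lipschitz estimate \eqref{tech0} this costs only $O(\tau^2)$. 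Writing $p=e^{\xi\mathcal L}u(t_n)$, $q=\overline p=e^{\xi\overline{\mathcal L}}\overline{u(t_n)}$, the remaining defect $R(\xi)$ satisfies $R(0)=0$, and a direct differentiation in $\xi$ (using $\mathcal A=-\mathcal L+\overline{\mathcal L}$ and the definition~\eqref{DlowIntro}) yields
\[
 R'(\xi)=e^{-\xi\mathcal L}\,\mathcal C[f,\mathcal L](p,q)+e^{-\xi\mathcal L}\mathcal B\big(F(p)\cdot\mathcal C[G,\mathcal A](q)\big)+\mathcal Q(\xi),
\]
where the \emph{dominant} oscillation, carried by the bare term $\mathcal B(F\cdot\mathcal A G(q))$, cancels against $\Theta'=\mathcal B(F(u(t_n))\cdot\mathcal A e^{\xi\mathcal A}G(\overline{u(t_n)}))$ at leading order precisely by the design of the filtered scheme, leaving only commutator-type contributions. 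The two commutator terms are controlled in $X^{\alpha_0}$, uniformly, by \eqref{C1} and \eqref{C2}; here the uniformity in $s$ in \eqref{C2} is exactly what absorbs the intermediate oscillations $e^{s\mathcal A}$ appearing when $\mathcal Q$ is reorganized (via the fundamental theorem of calculus applied to the semigroup–nonlinearity commutator) into a further lower-order commutator. Hence $\|R'(\xi)\|_{\alpha_0}\le C$, and writing $\mathcal E^n\simeq e^{\tau\mathcal L}\int_0^\tau(\tau-\sigma)R'(\sigma)\,d\sigma$ gives the crucial bound $\|\mathcal E^n\|_{\alpha_0}\le C\tau^2$, with $C$ depending only on $\sup_{[0,T]}\|u\|_{\alpha_1}$.

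For the stability term I would use that $e^{\tau\mathcal L}$ is a contraction on every $X^\alpha$ (since it commutes with $\mathcal L^\alpha$), so that $\Phi^\tau_{\mathrm{num},1}$ is Lipschitz with constant $1+C\tau$ in $X^{\alpha_0}$ via~\eqref{tech1} and in $X^{a_0}$ via~\eqref{tech2}, the constants depending on the $X^{a_0}$-norms of the arguments. To keep these constants uniform I must first secure an a~priori bound $\|u^n\|_{a_0}\le R$ for $n\tau\le T$: this I would obtain from~\eqref{tech0bis}, which propagates the $X^{\alpha_1}$-norm (hence the $X^{a_0}$-norm, as $a_0\le\alpha_1$) of the numerical solution, combined with a continuation/bootstrap argument. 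The two-norm bookkeeping is genuine only when $\alpha_0<a_0$: there the $X^{\alpha_0}$-error does not control the $X^{a_0}$-error, so I would run two separate Gronwall estimates — a rate-free convergence (equivalently boundedness) in $X^{a_0}$ using \eqref{tech2}, and then the refined first-order estimate in $X^{\alpha_0}$ using \eqref{tech1} — whereas for $\alpha_0\ge a_0$ a single estimate in $X^{\alpha_0}$ suffices and yields the $X^{a_0}$-bound for free.

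Assembling the pieces, the telescoping identity gives $\|e^{n+1}\|_{\alpha_0}\le(1+C\tau)\|e^n\|_{\alpha_0}+C\tau^2$, and since the contraction property forbids any exponential growth from the linear part, the discrete Gronwall lemma produces $\|e^n\|_{\alpha_0}\le e^{CT}\,n\,C\tau^2\le C_T\tau$ for $n\tau\le T$, which is the claim. I expect the main obstacle to be the local-error analysis of the second paragraph: proving that the dominant $e^{\xi\mathcal A}$-oscillation cancels exactly and, above all, that \emph{every} residual term (in particular $\mathcal Q$) can be rewritten as a commutator estimated by \eqref{C1}–\eqref{C2} \emph{without} spending additional regularity beyond $X^{\alpha_1}$ — this is precisely where the low-regularity gain lives. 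The secondary difficulty is the careful two-norm continuation argument ensuring the a~priori $X^{a_0}$-boundedness of the numerical solution on the full interval $[0,T]$ when $\alpha_0<a_0$.
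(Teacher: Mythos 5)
Your overall architecture is exactly the paper's: the splitting of $e^{n+1}$ into local error plus stability, the passage to the interaction picture costing $O(\tau^2)$ via \eqref{tech0}, the identity $\tau\varphi_1(\tau\mathcal A)=\int_0^\tau e^{\xi\mathcal A}\,d\xi$, and the reduction of the remaining defect to the two commutator contributions controlled by \eqref{C1} and \eqref{C2} reproduce the decomposition $\mathcal E=\mathcal E_1+\mathcal E_2+\mathcal E_3$ of Proposition \ref{proplocal1}; likewise your two-norm Gronwall recursion via \eqref{tech1}--\eqref{tech2} is the paper's proof of the theorem. The $X^{\alpha_0}$ half of your argument, including the $O(\tau^2)$ local error, is sound. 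But there is a genuine gap in the case $\alpha_0<a_0$: your plan never produces a local error bound in $X^{a_0}$ that is \emph{superlinear} in $\tau$, and without one neither of your two substitutes closes. The "rate-free convergence in $X^{a_0}$ using \eqref{tech2}" needs a local error $\delta_n=\|\mathcal E(\tau,t_n)\|_{a_0}=o(\tau)$ uniformly: with only the trivial $O(\tau)$ bound, $n\sim T/\tau$ steps accumulate to an $O(1)$ quantity, which is bounded but not small, so the bootstrap hypothesis needed to keep the nonlinear constants $C_{a_0}(\|u(t_n)\|_{a_0},\|e^n\|_{a_0})$ uniform cannot be verified as $\tau\to 0$ (the self-consistency condition $e^{C(R)T}\,T\,(\delta/\tau)\le R$ has no solution $R$ independent of $\tau$ when $\delta\sim\tau$). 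Your alternative — propagating the $X^{\alpha_1}$ norm of the numerical solution directly from \eqref{tech0bis} with a continuation argument — also fails on a fixed interval $[0,T]$: since $e^{\tau\mathcal L}$ is a contraction on $X^{\alpha_1}$, \eqref{tech0bis} only yields the recursion $\|u^{n+1}\|_{\alpha_1}\le \|u^n\|_{\alpha_1}+\tau C_{\alpha_1}(\|u^n\|_{\alpha_1})$, the discrete analogue of $y'=C_{\alpha_1}(y)$, which may blow up before $T$ when $C_{\alpha_1}$ grows superlinearly; nothing ties the numerical lifespan to that of the exact solution unless convergence in $X^{a_0}$ is already known — which is circular.

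The paper closes this gap with an interpolation argument you are missing. Writing, as you do,
\begin{equation*}
\mathcal E(\tau,t_n)=\int_{t_n}^{t_{n+1}} e^{(t_{n+1}-s)\mathcal L} f\bigl(u(s),\overline u(s)\bigr)\,ds
-\tau\, e^{\tau\mathcal L}\,\mathcal B\bigl(F(u(t_n))\cdot\varphi_1(\tau\mathcal A)G(\overline u(t_n))\bigr),
\end{equation*}
one estimates both terms crudely in the \emph{strong} norm $X^{\alpha_1}$ — the first by Assumption \ref{assnon} (recall $\alpha_1\in(a_0,a_1]$) together with $\sup_{[0,T]}\|u\|_{\alpha_1}\le C_T$, the second by \eqref{tech0bis} — obtaining $\|\mathcal E(\tau,t_n)\|_{\alpha_1}\le M_T\,\tau$. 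Since $\alpha_0<a_0<\alpha_1$, interpolation in the scale $X^\alpha$ then gives
\begin{equation*}
\|\mathcal E(\tau,t_n)\|_{a_0}\le \|\mathcal E(\tau,t_n)\|_{\alpha_1}^{\theta}\,\|\mathcal E(\tau,t_n)\|_{\alpha_0}^{1-\theta}
\le M_T\,\tau^{2-\theta},\qquad \theta\in(0,1),
\end{equation*}
i.e.\ the required $O(\tau^{1+\epsilon})$ bound with $\epsilon=1-\theta>0$. With this superlinear $X^{a_0}$ local error, the accumulated $X^{a_0}$ error is $O(\tau^\epsilon)$, the bootstrap on the constants closes for $\tau$ small, and your $X^{\alpha_0}$ Gronwall estimate then delivers the stated first-order bound. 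So the fix is local and compatible with your scheme of proof: add the crude $X^{\alpha_1}$ consistency estimate and interpolate, rather than attempting a direct a~priori bound on the numerical solution.
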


This abstract result will be applied to the concrete examples presented in Section \ref{sectionexamples}.
We will mainly  have to check that Assumption \ref{asscom} indeed holds true for some  $ \alpha_{1}< \alpha_{0} + 1 $ to get a concrete
first-order convergence result that holds true for rougher data than classical schemes.
One of the interest of this general theory is that different types of PDEs (parabolic, hyperbolic, dispersive) can be covered
in the same unified framework. Nevertheless, the abstract proof will not use any fine structure of the PDE (for example
 smoothing effect for parabolic equations, Strichartz estimates in exterior domains  for dispersive equations, etc). 
  Using these specific properties  when it is possible  on a  concrete example would allow to reduce again
   the smoothness of the initial data, see, e.g. \cite{HochOst10,ORS19}.

In certain cases we can relate our scheme \eqref{num1Intro} to more classical schemes.

\noindent{\bf Parabolic setting.}
 In the parabolic setting  $\mathcal{A}= 0$ (or in case of  nonlinearities depending only on $u$, i.e., $f(u,\overline u) = f(u)$),  the scheme \eqref{num1Intro} collapses to the classical exponential Euler method
\begin{equation}\label{num1Par}
u^{\ell+1}=  e^{\tau \mathcal{L}} \Big( u^\ell +  \tau f(u^\ell, \overline u^\ell) \Big) .
\end{equation}
{\bf Filtered Lie splitting.}
For a  nonlinearity \eqref{nonlin} with $\mathcal{B} = 1$ and polynomial $F(v) = v^p$ 
 the low regularity scheme \eqref{num1Intro} can also be seen as a filtered Lie splitting scheme with filter function
\begin{align}\label{filterIntro}
\Psi(\tau) = \varphi_1\big(\tau  \mathcal{A}\big) .
\end{align} Indeed, the approximation 
\begin{equation}\label{numFLieIntroe}
u^{\ell+1}=  e^{\tau \mathcal{L}} \e^{  \tau  \frac{1}{p}F'(u^\ell) \cdot  \left(\Psi(\tau) G(\overline{u^\ell})\right) } u^\ell  
\end{equation}
with filter function \eqref{filterIntro} introduces a similar   error structure as the scheme \eqref{num1Intro}.  In the parabolic setting $\mathcal{A} =0 $  the filter function \eqref{filterIntro} thereby naturally reduces to $\Psi(\tau) \equiv 1$  and the filtered Lie splitting \eqref{numFLieIntroe} collapses to a classical Lie splitting. In the dispersive setting $\mathcal{L}\neq \oD$, on the other hand, the filtered splitting \eqref{numFLieIntroe} allows for an improved   error structure similar to the convergence result given in Theorem \ref{theo1}, see also Remark \ref{rem:splitNLS} for the example of the cubic Schr\"odinger equation. For details on filter functions   we refer to  \cite{HLW} and the references therein. 

\subsection{Second-order Duhamel integrator}
At second-order our new Duhamel integrator   takes the form
\begin{equation}\label{num2Intro}
\begin{aligned}
u^{\ell+1}& = e^{ \tau \mathcal{L}} u^\ell +  \tau e^{ \tau  \mathcal{L}} \mathcal{B}\left(F (u^\ell) \cdot \varphi_1\left(  \tau  \mathcal{A}\right) G(\overline u^\ell) \right)  +  
  \tau  e^{ \tau \mathcal{L}}  \mathcal{B}\left(F (u^\ell) \cdot \varphi_2\left(  \tau  \mathcal{A}\right) \delta_\tau \left( e^{- \tau \mathcal{A}}G \left(  e^{\tau \mathcal{A} }  \overline u^\ell\right) \right)    \right)\\
&
+  \tau \delta_{\xi} \left( 
e^{(\tau -\xi )\mathcal{L}}\mathcal{B} \left( F \left(e^{\xi\mathcal{L}} u^\ell\right) \cdot \varphi_2\left(\tau\mathcal{A}\right)G\left(e^{\xi \mathcal{L}} \overline u^\ell\right)\right) \right)_{/\xi=\tau} 
  + \frac{\tau^2}{2}e^{ \tau \mathcal{L}} \left(   D_1 f^\ell  \cdot  f^\ell  +  D_2 f^\ell \cdot  \overline{f^\ell}\right) \\&
  = \Phi^\tau_{\text{num}, 2}(u^\ell) = e^{\tau \mathcal{L}} \left(u^\ell + \tau  \Psi^\tau_{\text{num}, 2}(u^\ell)\right)
  \end{aligned}
\end{equation}
where $ f^\ell =  f(u^\ell,\overline u^\ell)$ and we use the notation of the standard shift operator  $$\delta_\tau g(\tau) = g(\tau) - g(0).$$
Note that in general $D_1 f \left(  u^\ell,   \overline  u^\ell\right)  $  and $ D_2 f\left(   u^\ell,   \overline  u^\ell\right)$ can be calculated analytically. Nevertheless on concrete examples,  they could also be approximated by standard finite differences. Similarly to the first-order scheme \eqref{num1Intro}, its second-order counterpart \eqref{num2Intro} introduces an improved commutator-type   error structure.

In order to analyze our second-order scheme, we need to introduce second-order commutators.
For   $H(v_{1}, \cdots v_{n})$ and a linear operator $L$, we define the iterated ``commutator"
\begin{equation}\label{CC2}
\mathcal{C}^2[ H, L ] (v_{1}, \cdots, v_{n})
 = \mathcal{C}[ \mathcal{C}[H, L], L] (v_{1}, \cdots,  v_{n}).
 \end{equation}
 
 We shall again measure the error in $X^{\alpha_{0}}$ for some given $\alpha_{0}$.
 The counterpart of Assumption \ref{asscom} will be the following  
 \begin{ass}\label{asscom2}
 There exists $\alpha_{2} \in (a_{0}, a_{1}] $ such that for every $v$, $w \in X^{\alpha_{2}}$, 
\begin{align}
\label{C12} &  \|\mathcal{C}^2[f, \mathcal{L}] (v, w) \|_{\alpha_{0}} \leq C_{\alpha_{0}}( \|v\|_{\alpha_{2}},   \|w\|_{\alpha_{2}}), \\
\label{C22} &  \|\mathcal{B}\left( F(v) \cdot e^{s \mathcal{A}}\mathcal{C}^2[G, \mathcal{A}] (w) \right)\|_{\alpha_{0}} \leq C_{\alpha_{0}}(
 \|v\|_{\alpha_{2}},   \|w\|_{\alpha_{2}}), 
 \quad \forall s \in \mathbb{R}, \\
 \label{C32} &  \|\mathcal{C}[\Psi_{s}, \mathcal{L}] (v, w) \|_{\alpha_{0}} \leq C_{\alpha_{0}}(
 \|v\|_{\alpha_{2}},   \|w\|_{\alpha_{2}}), \quad \forall s \in \mathbb{R},
\end{align}
where $\Psi_{s}(v,w)= \mathcal{B}\left( F(v)\cdot e^{s \mathcal{A}} \mathcal{C}[G, \mathcal{A}](w)\right).$
The second and third estimates are uniform with respect to $s$.
\end{ass}
Again in the case $\alpha_{0}= 0$, a rough estimate that does not use the commutator structure would allow to estimate these commutators
 for $v$, $w$ in $X^2$ which would produce second-order convergence  of the scheme in $X$ for data in $X^2$
  like other classical schemes. If we can check on a concrete example by using the commutator structure that
   $\alpha_{2}<2$, then we can get  improved second-order convergence  of the scheme in $X$ for data only  in $X^{\alpha_{2}}$.
    
 The counterpart of Assumption \ref{asstech} will be
 
 \begin{ass}\label{asstech2}
Let $\alpha_{0}$, $\alpha_{2}$ be  given as above, we assume that
\begin{align}
\label{tech02}
&  \| f(u,\overline{u})-  f(v,\overline{v}) \|_{\alpha_{0} }  \leq  C_{\alpha_{0}}( \|u\|_{a_{0}}, \|v\|_{a_{0}}) \|u-v\|_{\alpha_{0} }, \\
\label{tech0bisbis2}&  \| L^k (f(u,\overline{u})) \|_{\alpha_{0}}  + \| D_{i} f(u, \overline{u}) \cdot Lu \|_{\alpha_{0}}
 +   \|D_{i} f(u, \overline{u}) \cdot  g(v, \overline{v}) \|_{\alpha_{0}}
 \leq C_{\alpha_{0}}(  \|u\|_{\alpha_{2}}, \|v\|_{\alpha_{2}}), \\
&   \nonumber \mbox{for }
 \, k=0, \, 1, \, i=1, \, 2,  \, w\in \{ \mathcal{L}u, \, \overline{\mathcal{L}u}, \, f(v,\overline{v}), \, \overline{f}(v, \overline{v})\},  \\
 & \label{tech0bisbis3}
 \| L \left( D_{i} f(u, \overline{u}) \cdot g(v, \overline{v}) \right) \|_{\alpha_{0}}
  +  \| D_{ij}^2 f(u, \overline{u}) \cdot (w_{1}, w_{2}) \|_{\alpha_{0}} \leq C_{\alpha_{0}} (  \|u\|_{\alpha_{2}}, \|v\|_{\alpha_{2}}, 
  \| w \|_{\alpha_{2}}), \quad \\
&   \nonumber  \mbox{for } i, \, j \in \{1, 2\}, \,  w_{i} \in \{ \mathcal{L}w, \overline{\mathcal{L}} \overline{w}, f(v,v), \overline{f}(v, \overline v) \}, \,
  \end{align} 
  and for the numerical flux that 
  \begin{align}
&  \label{tech0bis2}
  \| \Psi^\tau_{\text{num,2}} (u) \|_{\alpha_{2}}
   \leq  C_{\alpha_{2}}(  \|u\|_{\alpha_{2}}),\\
&  \label{tech12}   \| \Psi_{\text{num}, 2}^\tau(u) -  \Psi_{\text{num}, 2}^\tau({\red v}) \|_{\alpha_{0}}  
  \leq  C_{a_{0}}( \|u\|_{a_{0}}, \|v\|_{a_{0}} ) \|u- v \|_{\alpha_{0}} ,  \\
&  \label{tech22}   
 \| \Psi_{\text{num}, 2}^\tau(u) -  \Psi_{\text{num}, 2}^\tau({\red v}) \|_{a_{0}} 
\leq  C_{\alpha_{0}}( \|u\|_{a_{0}}, \|v\|_{a_{0}} ) \|u- v \|_{a_{0}}. 
   \end{align} 
\end{ass}
To check the above assumptions on concrete examples,  we will be sometimes obliged to take 
 $a_{0}$ bigger than for first-order convergence, nevertheless we did not change the notation.
 Our second-order convergence result then reads:
 \begin{theorem} 
\label{theo2}
\red Let   Assumptions \ref{ass1},  \ref{assnon} hold, let us choose some  $\alpha_{0}\in [0, a_{1}]$ 
such that Assumptions  \ref{asscom2}, \ref{asstech2} for some $\alpha_{2} \in (a_{0}, a_{1}]$
 hold. \black Then, 
 for every  $u_{0} \in X^{\alpha_{2}}$, $\alpha_{2}$ given by Assumption \ref{asscom2}, let $T>0$ and $u \in \mathcal{C}([0, T], X^{\alpha_{2}})$
  the unique solution of \eqref{duh} given by Theorem \ref{localwp}, let  $u^n$ denote the numerical solution given by \eqref{num2Intro}.
   Then there exists $C_{T}>0$ such that
 $$ \|u(n \tau) - u^n \|_{\alpha_{0}} \leq C_{T} \tau^2, \quad 0 \leq n \tau \leq T.$$

\end{theorem}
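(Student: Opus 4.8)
The plan is to follow the classical \emph{Lady Windermere's fan} strategy, decomposing the global error into accumulated local errors propagated by the stability of the numerical flow. Throughout I write $\varphi^\tau(v)$ for the exact flow of \eqref{duh} at time $\tau$ with datum $v$, so that $u((n+1)\tau) = \varphi^\tau(u(n\tau))$, and $\Phi^\tau_{\mathrm{num},2}$ for the numerical flow \eqref{num2Intro}, so that $u^{n+1} = \Phi^\tau_{\mathrm{num},2}(u^n)$. Setting $e_n = u(n\tau) - u^n$, the telescoping identity
\[ e_{n+1} = \big(\varphi^\tau(u(n\tau)) - \Phi^\tau_{\mathrm{num},2}(u(n\tau))\big) + \big(\Phi^\tau_{\mathrm{num},2}(u(n\tau)) - \Phi^\tau_{\mathrm{num},2}(u^n)\big) \]
splits $e_{n+1}$ into a local error term and a stability term. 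Recall from Theorem \ref{localwp} that $u\in\mathcal{C}([0,T],X^{\alpha_2})$, so that $M := \sup_{[0,T]}\|u(t)\|_{\alpha_2}<\infty$.

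First I would establish the \textbf{local error} bound
\[ \|\varphi^\tau(v) - \Phi^\tau_{\mathrm{num},2}(v)\|_{\alpha_0} \leq \tau^3\, C(\|v\|_{\alpha_2}), \qquad \tau \in (0,1], \]
for $v \in X^{\alpha_2}$. This is the heart of the matter and the step I expect to be the main obstacle. I would iterate Duhamel's formula \eqref{duh} to obtain an expansion of $\varphi^\tau(v)$ up to and including the $\tau^2$ contributions with an explicit integral remainder; in parallel, the scheme \eqref{num2Intro} is, by the very construction carried out in Section \ref{sec:2}, designed so that its Taylor expansion reproduces exactly those $O(1)$, $O(\tau)$ and $O(\tau^2)$ terms, including the term $\tfrac{\tau^2}{2}e^{\tau\mathcal{L}}(D_1f\cdot f + D_2 f\cdot\overline{f})$. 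The difference therefore collects into time-integrated remainders which, after integration by parts in the oscillatory factors $e^{s\mathcal{A}}$ and $e^{s\mathcal{L}}$, are expressed through the iterated commutators $\mathcal{C}^2[f,\mathcal{L}]$, $\mathcal{B}(F(v)\cdot e^{s\mathcal{A}}\mathcal{C}^2[G,\mathcal{A}](w))$ and $\mathcal{C}[\Psi_s,\mathcal{L}]$. Each such remainder carries three powers of $\tau$ and is bounded in $X^{\alpha_0}$ by Assumption \ref{asscom2}, together with the auxiliary nonlinear estimates \eqref{tech0bisbis2}--\eqref{tech0bisbis3}; the fact that these commutators require only $X^{\alpha_2}$ with $\alpha_2$ possibly below $2$ is exactly what yields the low-regularity gain over classical schemes. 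The delicate point is the bookkeeping: one must verify that no genuine $\tau^2$ term survives and that every surviving term genuinely reduces to a commutator covered by Assumption \ref{asscom2}.

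Second I would record the \textbf{stability} estimate. Since $e^{\tau\mathcal{L}}$ is a contraction on each $X^{\alpha}$ (Assumption \ref{ass1}, because $\mathcal{L}^{\alpha}$ commutes with the semigroup) and $\Phi^\tau_{\mathrm{num},2}(u) = e^{\tau\mathcal{L}}(u + \tau\Psi^\tau_{\mathrm{num},2}(u))$, the Lipschitz bound \eqref{tech12} gives directly
\[ \|\Phi^\tau_{\mathrm{num},2}(u) - \Phi^\tau_{\mathrm{num},2}(v)\|_{\alpha_0} \leq (1 + C\tau)\,\|u-v\|_{\alpha_0} \]
as long as $\|u\|_{a_0},\|v\|_{a_0}$ stay in a fixed ball. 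To guarantee that the iterates remain in such a ball I would prove an \textbf{a priori bound} $\sup_{n\tau\leq T}\|u^n\|_{\alpha_2}\leq R$: contractivity of $e^{\tau\mathcal{L}}$ on $X^{\alpha_2}$ and \eqref{tech0bis2} give $\|u^{n+1}\|_{\alpha_2}\leq\|u^n\|_{\alpha_2}+\tau\,C(\|u^n\|_{\alpha_2})$, a discrete Gronwall inequality which, compared with the solution of the corresponding ODE, keeps $\|u^n\|_{\alpha_2}$ bounded on the fixed interval $[0,T]$ for $\tau$ small; since $\alpha_2>a_0$ this also controls $\|u^n\|_{a_0}$ (and if $\alpha_0<a_0$ one reruns the same argument in $X^{a_0}$ via \eqref{tech22}). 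Inserting the local error and stability bounds into the telescoping identity then yields the scalar recursion $\|e_{n+1}\|_{\alpha_0}\leq(1+C\tau)\|e_n\|_{\alpha_0}+C\tau^3$ with $e_0=0$, so that the discrete Gronwall lemma gives
\[ \|e_n\|_{\alpha_0} \leq C\tau^3\sum_{k=0}^{n-1}(1+C\tau)^k \leq \tau^2\big(e^{Cn\tau}-1\big) \leq \tau^2\big(e^{CT}-1\big) =: C_T\,\tau^2, \]
which is the claimed bound since $n\tau\leq T$. The only nontrivial coupling is that the stability constant depends on the a priori bound $R$, so in practice the boundedness step and the error recursion are best run together as a single induction on $n$.
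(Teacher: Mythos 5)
Your overall architecture (telescoping into local error plus stability, a local error bound of size $\tau^3$ in $X^{\alpha_0}$ obtained from the iterated-Duhamel expansion and the commutators of Assumption \ref{asscom2}, then a discrete Gronwall) is the same as the paper's, which proves exactly your local bound as Proposition \ref{proplocal2} via Corollary \ref{cor:2}. But there is a genuine gap in how you control the numerical iterates, i.e.\ in the step that feeds the stability constants. Your proposed a priori bound $\sup_n \|u^n\|_{\alpha_2}\leq R$ from the recursion $\|u^{n+1}\|_{\alpha_2}\leq \|u^n\|_{\alpha_2}+\tau\, C(\|u^n\|_{\alpha_2})$ does not follow from a comparison with the ODE $y'=C(y)$ on the \emph{given} interval $[0,T]$: the constants $C_{\alpha_2}(\cdot)$ in \eqref{tech0bis2} are merely continuous and non-decreasing (think of a cubic nonlinearity), so the comparison ODE can blow up at some $T^*<T$, and nothing prevents the discrete iterates from leaving every ball before time $T$ uniformly in $\tau$. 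The paper never proves, and does not need, any bound on $u^n$ in the strong norm $X^{\alpha_2}$.

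What actually closes the argument in the paper — and what is missing from your proposal — is a \emph{second} local error estimate, in $X^{a_0}$, of order $\tau^{1+\epsilon}$ with $\epsilon>0$. This matters because the framework allows $\alpha_0<a_0$ (e.g.\ $\alpha_0=0$ while $a_0>d/2$ or $d/4$ in the Schr\"odinger and half-wave examples), in which case the $X^{\alpha_0}$-error recursion cannot control $\|u^n\|_{a_0}$, on which the Lipschitz constants in \eqref{tech12}, \eqref{tech22} depend. The paper gets $\|\mathcal{R}_2(\tau,t_n)\|_{a_0}\leq M_T\,\tau^{1+\epsilon}$ by interpolating in the $X^{\alpha}$ scale between the $O(\tau^3)$ bound in $X^{\alpha_0}$ and a crude $O(\tau)$ bound in the high norm (obtained from Duhamel and \eqref{tech0bis2}, using that the \emph{exact} solution is bounded in $X^{\alpha_2}$). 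Running the error recursion simultaneously in $X^{a_0}$ with this input gives $\|e^n\|_{a_0}\lesssim \tau^{\epsilon}$, so for $\tau$ small the iterates stay in a fixed $X^{a_0}$-ball around the exact trajectory, which closes the coupled induction you correctly identify at the end. Note that a plain $O(\tau)$ local error in $X^{a_0}$ would not do: Gronwall would only give $\|e^n\|_{a_0}=O(1)$ with constants depending on the very bound being proved, i.e.\ a circular argument; the strict gain $\tau^{1+\epsilon}$ from interpolation is the missing idea. Once you replace your $X^{\alpha_2}$ a priori bound by this two-norm bootstrap, the rest of your proof goes through as written.
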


In the parabolic case we can again relate our new second-order scheme to classical methods.
\begin{rem}[Parabolic case]
Note that in the parabolic case $(\mathcal{A} = 0)$ the second-order scheme~\eqref{num2Intro}  simplifies to an exponential Runge--Kutta method
\begin{equation}
\begin{aligned}
u^{\ell+1} & = e^{ \tau \mathcal{L}} u^\ell +  \tau e^{ \tau \mathcal{L}} 
f(u^\ell,\overline u^\ell) 
+  \frac{\tau}{2}e^{ \tau \mathcal{L}}  \delta_\tau e^{ -\tau \mathcal{L}}  f \left(e^{\tau\mathcal{L}} u^\ell, e^{\tau\mathcal{L}} \overline u^\ell\right)  \\& + \frac{\tau^2}{2}e^{ \tau \mathcal{L}} \left(  D_1 f \left(  u^\ell,   \overline  u^\ell\right)\cdot f(u^\ell,\overline u^\ell)   +   D_2 f\left(   u^\ell,   \overline  u^\ell\right)\cdot  \overline{f(u^\ell,\overline u^\ell)}  \right).
\end{aligned}
\end{equation}
\end{rem}
\begin{rem}
 \red The presented idea can  be extended to higher order by carrying out higher order stabilised Taylor series expansions of the filtered oscillations. For each additional order the order of the iterated commutator (cf. \eqref{CC2}) will thereby increase in the local approximation error.
\end{rem}

\section{first-order scheme}\label{sec:osc}\label{sec:1}
We will build our numerical schemes on iterations of  \eqref{duh}. In each iteration we  embed the dominant  oscillatory terms - triggered by the operator $\mathcal{L}$  - into our discretisation. 

In this Section we  give the main idea behind the expression of the  first-order Duhamel integrator \eqref{num1Intro} presented in Section \ref{sec:duhi} and estimate its local error. We start with a trivial but important  lemma on the first iteration.


\begin{lemma}[First-order iteration]\label{lem1}
We have
\begin{align}\label{u1pu}
u(t) = u_1(t) +  \mathcal{R}_{1,0}(t,u)
\end{align}
with the first-order iteration of Duhamel's formula $u_1(t)$ given by
\begin{align}\label{u1p}
u_1(t) = e^{ t \mathcal{L}} u_0 +
\int_0^t e^{ (t-  \xi) \mathcal{L}}  f \left(e^{ \xi\mathcal{L}} u_0,e^{ \xi  \overline{\mathcal{L}} }\overline u_0\right) d\xi 
\end{align}
and the remainder
\begin{align}\label{rem10}
 \mathcal{R}_{1,0}(t,u)
= 
\int_0^t e^{( t - \xi) \mathcal{L}}  \Big( f \left(u(\xi), \overline u(\xi) \right) 
- f \left(e^{ \xi\mathcal{L}} u_0, e^{ \xi  \overline{\mathcal{L}} }\overline u_0\right)
\Big)
d\xi.
\end{align}
\end{lemma}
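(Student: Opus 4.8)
The plan is to observe that \eqref{u1pu} is, at its heart, a pure add-and-subtract identity applied to the Duhamel formula \eqref{duh}. Starting from
$$u(t) = e^{t\mathcal{L}} u_0 + \int_0^t e^{(t-\xi)\mathcal{L}} f\big(u(\xi), \overline{u}(\xi)\big) \, d\xi,$$
I would insert inside the integrand the ``frozen'' nonlinearity $f\big(e^{\xi\mathcal{L}} u_0, e^{\xi\overline{\mathcal{L}}} \overline{u}_0\big)$ obtained by replacing the unknown $u(\xi)$ by its linear (free) flow $e^{\xi\mathcal{L}} u_0$. Writing
$$f\big(u(\xi),\overline u(\xi)\big) = f\big(e^{\xi\mathcal{L}}u_0, e^{\xi\overline{\mathcal{L}}}\overline u_0\big) + \Big( f\big(u(\xi),\overline u(\xi)\big) - f\big(e^{\xi\mathcal{L}}u_0, e^{\xi\overline{\mathcal{L}}}\overline u_0\big) \Big)$$
and splitting the integral into the two corresponding pieces immediately produces $u_1(t)$ as in \eqref{u1p} from the first piece together with the free term $e^{t\mathcal{L}}u_0$, and $\mathcal{R}_{1,0}(t,u)$ as in \eqref{rem10} from the second. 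No estimate is required; the identity holds verbatim for any mild solution provided by Theorem \ref{localwp}.

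The only point that deserves a word of justification is that the linear flow of $\overline{u}_0$ is correctly written as $e^{\xi\overline{\mathcal{L}}}\overline{u}_0$, i.e.\ that freezing $u(\xi)\approx e^{\xi\mathcal{L}}u_0$ in the conjugate slot yields $\overline{u}(\xi)\approx e^{\xi\overline{\mathcal{L}}}\overline{u}_0$. This follows from the conjugation identity
$$e^{\xi\overline{\mathcal{L}}}\, w = \overline{e^{\xi\mathcal{L}}\,\overline w}, \qquad w\in X,$$
which I would verify by noting that $v(\xi):=\overline{e^{\xi\mathcal{L}}\overline w}$ solves $\partial_\xi v = \overline{\mathcal{L}\,\overline v} = \overline{\mathcal{L}}\,v$ with $v(0)=w$, using the very definition $\overline{\mathcal{L}} u = \overline{\mathcal{L}\,\overline u}$ from Section \ref{introAss} together with the fact that $u\mapsto\overline u$ is an isometry of $X$ (which guarantees $\overline{\mathcal L}$ generates a well-defined contraction semigroup). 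Applying this with $w=\overline u_0$ gives $e^{\xi\overline{\mathcal{L}}}\overline u_0 = \overline{e^{\xi\mathcal{L}}u_0}$, so the frozen argument in \eqref{u1p} is genuinely the conjugate of the free evolution and $u_1$ is precisely the first Picard iterate of \eqref{duh}. Since the whole statement reduces to this algebraic decomposition, there is no genuine obstacle in the proof: the content lies not in the identity itself but in the \emph{choice} of $u_1$, whose favorable local-error structure — ultimately captured through the commutator terms \eqref{DlowIntro} once $f$ is expanded around the free flow — is what the subsequent error analysis exploits.
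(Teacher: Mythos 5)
Your proposal is correct and matches the paper's proof, which simply states that the identity follows directly from \eqref{duh} — the add-and-subtract decomposition you spell out is exactly what is meant. Your additional verification of the conjugation identity $e^{\xi\overline{\mathcal{L}}}\overline{u}_0 = \overline{e^{\xi\mathcal{L}}u_0}$ via the definition $\overline{\mathcal{L}}u = \overline{\mathcal{L}\overline{u}}$ is a sound (if optional) elaboration rather than a different route.
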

\begin{proof}
The assertion directly follows from \eqref{duh}.
\end{proof}
The expansion \eqref{u1p} motivates the following definition, 
\begin{align}
\label{osc}\Osc(t,\mathcal{L}, v,\overline v)
=   \int_0^t e^{ (t-  \xi) \mathcal{L}}  f \left(e^{ \xi\mathcal{L}} v, e^{ \xi  \overline{\mathcal{L}} }\overline v\right) d\xi 
\end{align}
so that  $u_1(t)$ defined  in \eqref{u1p} can be expressed as
\begin{align}\label{u1}
u_1(t) = e^{\red t \mathcal{L}} u_0 +\Osc(t,\mathcal{L}, u_0 ,\overline u_0).
\end{align}
In order to allow for a low regularity approximation to $u_1(t)$  it is thus central to find a suitable discretisation of the  integral \eqref{osc}. 
For this purpose we set
\[
\mathcal{F}(t,\xi,v,\overline v) = e^{ (t-  \xi) \mathcal{L}} f \left(e^{ \xi\mathcal{L}} v, e^{ \xi \overline{\mathcal{L}}}  \overline v\right)
\]
such that by the  fundamental theorem of calculus we have
\begin{equation}\label{osci}
\begin{aligned}
\Osc(t,\mathcal{L}, v,\overline v) & =   \int_0^t \mathcal{F}(t, \xi, v, \overline{v}) d\xi 
=  t  \mathcal{F}(t, 0, v, \overline{v}) + \int_0^t \int_0^\xi  \partial_s \mathcal{F}(t, s, v, \overline{v} ) d s d \xi\\
& 
=  t   e^{ t \mathcal{L}} f(v,\overline v)+ \int_0^t \int_0^\xi  \partial_s \mathcal{F}(t, s, v, \overline{v} ) d s d \xi.
\end{aligned}
\end{equation}
Next we calculate that 
\begin{multline*}
 \partial_\xi \mathcal{F}(t,\xi,v,\overline v) \\
  = e^{(t-\xi)\mathcal{L}} \Bigl[  
\left.  -   \mathcal{L}\left( f \left(e^{\xi\mathcal{L}} v, e^{ \xi \overline{\mathcal{L}}}  \overline v\right)\right)
+   D_1  f \left(e^{\xi\mathcal{L}} v, e^{ \xi \oD}  \overline v\right)\cdot\mathcal{L}e^{ \xi\mathcal{L}} v 
+  D_{2}  f \left(e^{ \xi\mathcal{L}} v, e^{\xi \oD}  \overline v\right) \cdot  \oD e^{\xi \oD } \overline v
 \right]
\end{multline*}
which yields thanks to our definition \eqref{DlowIntro} that
\begin{equation}
\label{dF} \partial_\xi \mathcal{F}(t,\xi,v,\overline v) =  e^{(t-\xi)\mathcal{L}}\left(  D_{2}  f \left(e^{ \xi\mathcal{L}} v, e^{\xi \oD}  \overline v\right) \cdot 
\mathcal{A} e^{ \xi \overline{ \mathcal{L}}} \overline{v}  + 
 \mathcal{C}[f, \mathcal{L}]  \left(e^{\xi\mathcal{L}} v, e^{ \xi \overline{\mathcal{L}}}  \overline v\right) \right).
 \end{equation}
\begin{rem}
Let us observe that 
\begin{itemize}
\item When  $\mathcal{A}= 0$ (which is the case in the real  \emph{parabolic setting}) or when $f(v,\overline{v})= f(v)$, 
 we   obtain by plugging~\eqref{dF}~into~\eqref{osci} that   
\begin{equation}\label{osciPar}
\begin{aligned}
\|\Osc(t,\mathcal{L}, v,\overline v) -  t e^{t\mathcal{L}} f(v,\overline v) \| & \leq  C t^2 \sup_{s \in [0, t]} \|\partial_s \mathcal{F}(t,s, v, \overline{v}) \|  \\& \leq C t^2
\sup_{\xi \in [0, t]} \left\| \mathcal{C} [f, \mathcal{L}]  \left(e^{\xi\mathcal{L}} v, e^{ \xi \overline{\mathcal{L}}}  \overline v\right) \right\|.
\end{aligned}
\end{equation}
Therefore, by using Assumption \ref{asscom}, we can control the remainder if $v$ is only in $X^{\alpha_{1}}$.
\item  In the    \emph{dispersive setting},  on the other hand,  where $\mathcal{L}= i \mathcal{M}$ with $\mathcal{M}= \overline{ \mathcal{M}}$  the standard Taylor series expansion of the oscillations \eqref{osci}   introduces a classical local error structure involving the full  differential operator $\mathcal{L}$ (similarly to splitting or exponential integrator methods).
Indeed, since we now have  $\mathcal{A}= - 2 \mathcal{L}$, 
we still need that  $v \in X^1 = \mathcal{D}(\mathcal{L})$ to estimate the first term in the right hand side of \eqref{dF}, i.e.,  the term $\mathcal{A} e^{ \xi \overline{ \mathcal{L}}} \overline{v}  $.
\end{itemize}
\end{rem}
%

We shall thus now  develop a first-order approximation which  allows  low regularity approximations also in the dispersive setting $\mathcal{A}= - 2 \mathcal{L}$. 
\subsection{First-order approximation}
To  allow   also  in the dispersive setting for low regularity approximations  we need to tackle   those oscillations of \eqref{osc}   which produce the higher order term in~\eqref{dF}, namely
$$ D_{2}  f \left(e^{ \xi\mathcal{L}} v, e^{\xi \oD}  \overline v\right) \cdot 
\mathcal{A} e^{ \xi \overline{ \mathcal{L}}} \overline{v} .$$
 In order to achieve this, we manipulate the principal oscillations \eqref{osc}  as follows. We write
\begin{equation*}
\begin{aligned}
\Osc(t, \mathcal{L}, v,\overline v) & 
=  \int_0^t e^{ (t-  \xi) \mathcal{L}}  f \left(e^{ \xi\mathcal{L}} v, e^{ \xi \mathcal{L} } \left[e^{\xi \mathcal{A}}  \overline v\right]\right) d\xi,
\end{aligned}
\end{equation*}
recall that $\mathcal{A}= - \mathcal{L} + \oD$, and define the \emph{filtered} function
\begin{equation}\label{filtG}
 \mathcal{N}(t, s,\xi,v,\overline v) = 
e^{(t-  s)  \mathcal{L}}  f \left(e^{ s\mathcal{L}} v, e^{ s  \mathcal{L} }  e^{\xi \mathcal{A}}  \overline v \right).
\end{equation}
Note that the principal oscillations \eqref{osc} can be expressed with the aid of the filter function $\mathcal{N}$ as 
\[
\Osc(t, \mathcal{L}, v,\overline v)  
=   \int_0^t  \mathcal{N}(t, \xi,\xi,v,\overline v) d\xi .
\]
Now the  Taylor series expansion of the filtered function $ \mathcal{N}(t,s, \xi, v, \overline{v})$  around $s=0$ yields that
\begin{equation}\label{osc12}
\begin{aligned}
\Osc(t, \mathcal{L}, v,\overline v)  =  
   \int_0^t  \mathcal{N}(t, 0,\xi,v,\overline v) d\xi +    \int_0^t \int_0^\xi \partial_s  \mathcal{N}(t,s,\xi,v,\overline v) ds d\xi
\end{aligned}
\end{equation}
and we observe that thanks to the filtered structure of $\mathcal{N}$ its derivative $\partial_s  \mathcal{N}$ introduces an  {improved error structure}, namely
\begin{align}\label{dG}
\partial_s  \mathcal{N}(t, s,\xi,v,\overline v) = e^{(t-  s)\mathcal{L}}  \mathcal{C}[f, \mathcal{L}]
 \left(e^{ s\mathcal{L}} v, e^{s\mathcal{L}} e^{\xi\mathcal{A}}  \overline v \right).
\end{align}
The latter can be controlled using Assumption \ref{asscom}.
 This motivates the following expansion of the principal oscillatory integral~\eqref{osci} which builds the basis of our first-order Duhamel integrator \eqref{num1Intro}.
\begin{cor}\label{corOsc1} It holds that
\begin{equation}\label{osc1}
\begin{aligned}
\Osc(t, \mathcal{L}, v,\overline v) 
=e^{t   \mathcal{L}}   \int_0^t   f \left(  v,    e^{\xi \mathcal{A}}  \overline v \right)d \xi + \mathcal{R}_{1,1}(t)
\end{aligned}
\end{equation}
with the remainder
\begin{equation}\label{rem11}
\mathcal{R}_{1,1}(t) = \int_0^t \int_0^\xi 
e^{(t-  s)\mathcal{L}}  \mathcal{C}[f, \mathcal{L}] \left(e^{ s\mathcal{L}} v, e^{s\mathcal{L}} e^{\xi \mathcal{A}}  \overline v \right)d s d\xi
\end{equation}
where $\mathcal{C}[f,\mathcal{L}]$ is defined in \eqref{DlowIntro}.
\end{cor}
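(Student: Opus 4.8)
The plan is to read off the identity directly from the Taylor expansion \eqref{osc12} of the filtered function $\mathcal{N}$ together with the derivative formula \eqref{dG}; no new ingredient is needed beyond what has already been assembled in the preceding paragraphs. First I would record the starting point: since $\mathcal{A} = -\mathcal{L} + \oD$ and $[\mathcal{L}, \oD]=0$ by Assumption \ref{ass1}(iii), we have $e^{\xi \overline{\mathcal{L}}} = e^{\xi \mathcal{L}} e^{\xi \mathcal{A}}$, which is exactly what licenses rewriting $\Osc(t, \mathcal{L}, v, \overline v) = \int_0^t \mathcal{N}(t, \xi, \xi, v, \overline v)\, d\xi$ with $\mathcal{N}$ given by \eqref{filtG}.

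Next, for each fixed $\xi \in [0, t]$ I would apply the fundamental theorem of calculus in the variable $s$ to the map $s \mapsto \mathcal{N}(t, s, \xi, v, \overline v)$, that is $\mathcal{N}(t, \xi, \xi, v, \overline v) = \mathcal{N}(t, 0, \xi, v, \overline v) + \int_0^\xi \partial_s \mathcal{N}(t, s, \xi, v, \overline v)\, ds$, and integrate over $\xi \in [0,t]$ to reach \eqref{osc12}. The leading term is evaluated at $s = 0$: since $e^{0 \cdot \mathcal{L}} = \mathrm{Id}$, one reads from \eqref{filtG} that $\mathcal{N}(t, 0, \xi, v, \overline v) = e^{t \mathcal{L}} f(v, e^{\xi \mathcal{A}} \overline v)$, and because the prefactor $e^{t \mathcal{L}}$ does not depend on $\xi$ it factors out of the integral, producing the main term $e^{t \mathcal{L}} \int_0^t f(v, e^{\xi \mathcal{A}} \overline v)\, d\xi$.

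For the remainder I would justify the derivative formula \eqref{dG}. Differentiating $\mathcal{N}(t, s, \xi, v, \overline v) = e^{(t-s)\mathcal{L}} f(e^{s \mathcal{L}} v, e^{s \mathcal{L}} e^{\xi \mathcal{A}} \overline v)$ in $s$ by the product and chain rules yields the term $-\mathcal{L}\, e^{(t-s)\mathcal{L}} f(\cdots)$ from the prefactor and the two terms $e^{(t-s)\mathcal{L}} D_1 f \cdot \mathcal{L} e^{s \mathcal{L}} v$ and $e^{(t-s)\mathcal{L}} D_2 f \cdot \mathcal{L} e^{s \mathcal{L}} e^{\xi \mathcal{A}} \overline v$ from the two arguments. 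Pulling the common factor $e^{(t-s)\mathcal{L}}$ to the front and comparing with the definition \eqref{DlowIntro} of $\mathcal{C}[f, \mathcal{L}]$ (taken at $v_1 = e^{s\mathcal{L}} v$ and $v_2 = e^{s \mathcal{L}} e^{\xi \mathcal{A}} \overline v$), the remaining bracket is exactly $\mathcal{C}[f, \mathcal{L}](e^{s \mathcal{L}} v, e^{s \mathcal{L}} e^{\xi \mathcal{A}} \overline v)$, which is \eqref{dG}. Substituting this into the double integral in \eqref{osc12} identifies the remainder with $\mathcal{R}_{1,1}(t)$ as claimed.

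The computation is elementary; the only point requiring care is the differentiation of $\mathcal{N}$ in $s$, which presupposes enough regularity on $v$ for the unbounded factors $\mathcal{L} e^{s \mathcal{L}} v$ and for the differentiability of $f$ to be meaningful. I would therefore first establish the identity for sufficiently smooth $v$ (e.g.\ $v \in X^1$ with $f$ differentiable), where every manipulation above is legitimate, and then observe that the representation is only ever used in conjunction with the bounds of Assumption \ref{asscom} controlling $\mathcal{R}_{1,1}(t)$ at lower regularity, so that a density argument extends it to the regularity class actually needed.
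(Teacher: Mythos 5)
Your proof is correct and follows essentially the same route as the paper, whose proof of Corollary~\ref{corOsc1} consists precisely in combining the expansion \eqref{osc12} of the filtered function \eqref{filtG} with the derivative identity \eqref{dG} and the evaluation $\mathcal{N}(t,0,\xi,v,\overline v)=e^{t\mathcal{L}} f\left(v, e^{\xi\mathcal{A}}\overline v\right)$. The points you add --- the commutation identity $e^{\xi\overline{\mathcal{L}}}=e^{\xi\mathcal{L}}e^{\xi\mathcal{A}}$ (from Assumption \ref{ass1}(iii)) licensing the filtered rewriting, and the density argument handling the unbounded operators for rough $v$ --- merely make explicit steps the paper leaves implicit.
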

\begin{proof}
The assertion just  follows from \eqref{osc12} together with \eqref{dG} noting that $$ \mathcal{N}(t, 0,\xi,v,\overline v) = e^{t   \mathcal{L}}  f \left(  v,    
e^{\xi \mathcal{A}}  \overline v \right).$$
\end{proof}
Thanks to Corollary \ref{corOsc1},  it remains to derive a suitable discretisation of the  integral
\begin{equation}\label{ff}
 \int_0^t  f \left(  v,    e^{\xi \mathcal{A}}  \overline v \right)d \xi
\end{equation}
which can  still have high oscillations, e.g., in the dispersive setting $\mathcal{A} \neq 0$.
\begin{rem} Again, when $f$ is independent of $\overline{v}$  or when  $\mathcal{A}= 0$, 
no additional approximation has to be carried out since
\[
 \int_0^t  f \left(  v,    e^{\xi \mathcal{A}}  \overline v \right)d \xi  = t f(v,\overline v).
\]
On the other hand, when $\mathcal{A} \neq 0$  we need to carefully embed the remaining oscillations $e^{\xi \mathcal{A}}$ in \eqref{ff}  into our numerical discretisation as a simple Taylor series expansion of the latter would produce an error term again involving the full operator $ \mathcal{L}$ since 
\begin{equation}\label{to}
e^{\xi\mathcal{A}} = e^{- 2i \xi \mathcal{L}}  = 1 + \mathcal{O}(\xi \mathcal{L}).
\end{equation}
With an expansion of type \eqref{to} we would in particular come  back to a classical local error  structure,   similar to the one of splitting or exponential integrator methods, without  any improvement in  regularity. 

On the other hand, in the more difficult dispersive setting $\mathcal{A} \neq 0$, the advantage is that $\mathcal{A}$ generates not only a semigroup, but a group
  and hence 
we may go forward and backward in time when approximating the remaining oscillations \eqref{ff}.  The latter is the motivation for our Assumption  \ref{ass1}. 
 We shall also begin to use here our Assumption  on the structure of the nonlinearity~\eqref{nonlin}.
\end{rem}

\begin{lemma}\label{lemf} Under Assumption \ref{ass1} it holds that
\begin{equation}\label{rem12}
\begin{aligned}
\int_0^t  f \left( v,  e^{\xi \mathcal{A}} \overline v \right)d \xi 
=  t  \mathcal{B}(F(v)\cdot  \varphi_1( t \mathcal{A}) G (\overline v)) + \mathcal{R}_{1,2}(t)
\end{aligned}
\end{equation}
with $\varphi_1(z) = \frac{1}{z}(e^z-1)$ and the remainder 
\begin{align}\label{R12z}
\mathcal{R}_{1,2}(t) =  
\int_0^t \int_0^\xi \mathcal{B} \left(  F(v) 
 \cdot  e^{ (\xi - s) \mathcal{A}}  \mathcal{C}[G, \mathcal{A}]
  \left(  e^{ s \mathcal{A}}  \overline v \right) \right)  ds d\xi .
\end{align}
\end{lemma}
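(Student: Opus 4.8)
The plan is to reduce the identity to the tensorized structure \eqref{nonlin} of the nonlinearity and then to peel off, from the single factor $G(e^{\xi\mathcal{A}}\overline v)$, the purely oscillatory part $e^{\xi\mathcal{A}}G(\overline v)$ by a filtered fundamental-theorem-of-calculus argument, in the same spirit as the expansion \eqref{osci}--\eqref{dF} at the level of $\mathcal{F}$. First I would use \eqref{nonlin} to write
\[
 f\left(v, e^{\xi\mathcal{A}}\overline v\right) = \mathcal{B}\left(F(v)\cdot G\left(e^{\xi\mathcal{A}}\overline v\right)\right),
\]
so that everything hinges on expanding $G(e^{\xi\mathcal{A}}\overline v)$. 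To this end I introduce the filtered auxiliary function
\[
 \phi(s) = e^{(\xi-s)\mathcal{A}}\, G\left(e^{s\mathcal{A}}\overline v\right), \qquad s\in[0,\xi],
\]
which interpolates between $\phi(0) = e^{\xi\mathcal{A}}G(\overline v)$ and $\phi(\xi) = G(e^{\xi\mathcal{A}}\overline v)$. Differentiating in $s$, using the product rule and the fact that $\mathcal{A}$ commutes with its own group $e^{(\xi-s)\mathcal{A}}$ (Assumption \ref{ass1}), I obtain
\[
 \phi'(s) = e^{(\xi-s)\mathcal{A}}\left[ -\mathcal{A}\, G\left(e^{s\mathcal{A}}\overline v\right) + DG\left(e^{s\mathcal{A}}\overline v\right)\cdot \mathcal{A}\, e^{s\mathcal{A}}\overline v\right],
\]
and the bracket is exactly $\mathcal{C}[G,\mathcal{A}](e^{s\mathcal{A}}\overline v)$ by the definition \eqref{DlowIntro} (with $n=1$). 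The fundamental theorem of calculus then gives
\[
 G\left(e^{\xi\mathcal{A}}\overline v\right) = e^{\xi\mathcal{A}}G(\overline v) + \int_0^\xi e^{(\xi-s)\mathcal{A}}\,\mathcal{C}[G,\mathcal{A}]\left(e^{s\mathcal{A}}\overline v\right)\,ds.
\]

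Next I would insert this expansion into $\mathcal{B}(F(v)\cdot\,\cdot\,)$ and integrate over $\xi\in[0,t]$. The integral of the second term reproduces $\mathcal{R}_{1,2}(t)$ of \eqref{R12z} verbatim, while the first term becomes, by linearity of $\mathcal{B}$ and of the pairing $X\cdot Y$,
\[
 \mathcal{B}\left(F(v)\cdot \left(\int_0^t e^{\xi\mathcal{A}}\,d\xi\right) G(\overline v)\right).
\]
It then remains only to identify $\int_0^t e^{\xi\mathcal{A}}\,d\xi = t\,\varphi_1(t\mathcal{A})$. Here I would invoke Assumption \ref{ass1} together with Stone's theorem to write $\mathcal{A} = i\mathcal{M}$ with $\mathcal{M}$ self-adjoint, and apply the functional calculus to the scalar identity $\int_0^t e^{\xi z}\,d\xi = t\,\varphi_1(tz)$ on the spectrum $z\in i\mathbb{R}$; since $\varphi_1$ is bounded on $i\mathbb{R}$, the operator $\varphi_1(t\mathcal{A})$ is bounded and the identity holds on all of $X$. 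Combining the two terms yields the claimed formula \eqref{rem12}.

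The computation is essentially mechanical, and the only two points that require a little care are bookkeeping rather than substance. The differentiation producing $\phi'(s)$ presupposes that $s\mapsto e^{s\mathcal{A}}\overline v$ is differentiable, which is automatic once $\overline v$ is taken in $\mathcal{D}(\mathcal{A})$; the general statement then follows because the final identity only involves bounded operators, so one may pass to the limit by density. The operator-valued integral $\int_0^t e^{\xi\mathcal{A}}\,d\xi = t\,\varphi_1(t\mathcal{A})$ is likewise justified through the spectral measure of $\mathcal{M}$. Neither is a genuine obstacle: the substantive step is the algebraic recognition of the commutator $\mathcal{C}[G,\mathcal{A}]$ inside $\phi'(s)$, which is precisely the mechanism by which the filtered expansion gains regularity over a naive Taylor expansion of $e^{\xi\mathcal{A}}$.
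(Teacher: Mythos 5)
Your proof is correct and takes essentially the same route as the paper: your auxiliary function $\phi(s)=e^{(\xi-s)\mathcal{A}}G\left(e^{s\mathcal{A}}\overline v\right)$ is precisely $e^{\xi\mathcal{A}}\mathcal{N}_2(s,\mathcal{A},v)$ for the paper's filtered function $\mathcal{N}_2(s,\mathcal{A},v)=e^{-s\mathcal{A}}G\left(e^{s\mathcal{A}}\overline v\right)$, so your fundamental-theorem-of-calculus step, the identification of the commutator $\mathcal{C}[G,\mathcal{A}]$ in $\phi'(s)$, and the functional-calculus identity $\int_0^t e^{\xi\mathcal{A}}\,d\xi = t\,\varphi_1(t\mathcal{A})$ reproduce the paper's computation exactly. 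No gaps.
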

\begin{proof}
Thanks  to the structure of the nonlinearity \eqref{nonlin} (in particular using that $ \mathcal{B}$ is linear)  we have
\[
\int_0^t  f \left( v,  e^{\xi \mathcal{A}}  \overline v \right)d \xi 
= \int_0^t \mathcal{B}\left(F(v) \cdot G\left(  e^{\xi \mathcal{A}} \overline v \right)\right)d \xi .
\]
Now we apply the same trick as before to filter out the dominant oscillations in $G$ (triggered by $\mathcal{A}$). For this purpose we set 
\[
\mathcal{N}_2(\xi,\mathcal{A},v) = e^{- \xi \mathcal{A}}G  \left(  e^{\xi \mathcal{A}}  \overline v\right)
\]
(which means since $G(\cdot)$ is a vector   that   $e^{- \xi \mathcal{A}}$ is applied to each component) such that
\[
\mathcal{B}\left(F(v) \cdot G\left(  e^{\xi \mathcal{A}} \overline v \right)\right) = \mathcal{B}\left(F(v) \cdot  e^{ \xi \mathcal{A}}\mathcal{N}_2(\xi,\mathcal{A}, v)\right)  .
\]
In this notation we obtain (using again that $\mathcal{B}$ is linear) that
$$
\begin{aligned}
\int_0^t  f \left( v,  e^{\xi \mathcal{A}} \overline v \right)d \xi 
& =  \int_0^t  \mathcal{B} \left( F(v) \cdot   e^{ \xi \mathcal{A}} \mathcal{N}_2(\xi,\mathcal{A}, v)\right) d\xi\\
& = \int_0^t  \mathcal{B} \left( F(v) \cdot  e^{ \xi \mathcal{A}} \left [\mathcal{N}_2(0,\mathcal{A}, v)+ \int_0^\xi \partial_s \mathcal{N}_2(s,\mathcal{A}, v) ds\right ] \right) d\xi
\\&=  t \mathcal{B} \left( F(v) \cdot  \varphi_1( t \mathcal{A}) G(\overline v)  \right)+ 
 \int_0^t  \int_0^\xi  \mathcal{B} \left( F(v) \cdot  e^{ \xi \mathcal{A}} \partial_s \mathcal{N}_2(s,\mathcal{A},  v) ds \right) d\xi.
\end{aligned}
$$
Using that $\partial_s \mathcal{N}_2(s,\mathcal{A}, v) =   e^{ - s \mathcal{A}}  \mathcal{C}[G, \mathcal{A}]
  \left(  e^{ s \mathcal{A}}  \overline v \right)  $  yields the remainder \eqref{R12z}.
\end{proof}
Collecting the results in Lemma \ref{lem1}, Corollary \ref{corOsc1} and Lemma \ref{lemf} we thus obtain the following first-order {low regularity} approximation to the exact solution $u$ of \eqref{ev}.
\begin{cor}\label{cor11}
The exact solution $u$ of \eqref{ev} can be expanded as 
\[
u(t_n+\tau) = e^{\tau \mathcal{L}} \Big( u(t_n)+  \tau {\red  \mathcal{B}}\left(F (u(t_n)) \cdot  \varphi_1\big(\tau \mathcal{A}\big) G(\overline {u(t_n)})\right)\Big)  + \mathcal{R}_1(\tau,t_n)
\]
where the remainder $\mathcal{R}_1(\tau,t_n)$ is given by  
\begin{equation}\label{R1fin}
\begin{aligned}
 \mathcal{R}_1(\tau,t_n) 
 & =
 \int_0^\tau e^{ (\tau-  \xi) \mathcal{L}}  \Big( f \left(u(t_n+\xi), \overline u(t_n+\xi) \right) 
- f \left(e^{\xi\mathcal{L}} u(t_n), e^{ \xi \oD}  \overline u(t_n)\right)
\Big)
d\xi\\
& +   \red \int_0^t \int_0^\xi 
e^{(t-  s)\mathcal{L}}  \mathcal{C}[f, \mathcal{L}] \left(e^{ s\mathcal{L}} v, e^{s\mathcal{L}} e^{\xi \mathcal{A}}  \overline v \right)d s d\xi  \\
& +   e^{\tau \mathcal{L}} \int_0^\tau \int_0^\xi \mathcal{B} \left( F(u(t_n)) \cdot e^{ (\xi  - s) \mathcal{A}}  \mathcal{C}[ G, \mathcal{A}] \left(  e^{ s \mathcal{A}}  \overline u(t_n) \right) \right) ds d\xi .
\end{aligned}
\end{equation}
\end{cor}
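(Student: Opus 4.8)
The plan is to chain together the three preceding results, each applied on the interval $[t_n, t_n+\tau]$ rather than on $[0,t]$. First I would invoke Lemma \ref{lem1} for the mild formulation \eqref{duh} started at time $t_n$: using the evolution (semigroup) property of $\{e^{t\mathcal{L}}\}_{t\geq 0}$ together with the substitution $\xi\mapsto t_n+\xi$ one obtains
\[
u(t_n+\tau) = e^{\tau\mathcal{L}} u(t_n) + \Osc(\tau,\mathcal{L},u(t_n),\overline{u(t_n)}) + \mathcal{R}_{1,0}(\tau,t_n),
\]
where $\mathcal{R}_{1,0}(\tau,t_n)$ is exactly the first double-difference integral appearing in \eqref{R1fin}. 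This step isolates the principal oscillatory integral $\Osc$ evaluated at the frozen argument $v=u(t_n)$.

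Next I would substitute into $\Osc(\tau,\mathcal{L},u(t_n),\overline{u(t_n)})$ the expansion from Corollary \ref{corOsc1} (with $t=\tau$ and $v=u(t_n)$), replacing it by $e^{\tau\mathcal{L}}\int_0^\tau f(u(t_n),e^{\xi\mathcal{A}}\overline{u(t_n)})\,d\xi + \mathcal{R}_{1,1}(\tau)$, where $\mathcal{R}_{1,1}$ is precisely the $\mathcal{C}[f,\mathcal{L}]$ term forming the second line of \eqref{R1fin}. I would then apply Lemma \ref{lemf} to the remaining integral, which yields the leading contribution $\tau\,\mathcal{B}\!\left(F(u(t_n))\cdot\varphi_1(\tau\mathcal{A})G(\overline{u(t_n)})\right)$ together with the remainder $\mathcal{R}_{1,2}(\tau)$ carrying the commutator $\mathcal{C}[G,\mathcal{A}]$.

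Finally I would collect the three contributions. Factoring $e^{\tau\mathcal{L}}$ out of the linear and leading nonlinear terms produces the asserted approximation $e^{\tau\mathcal{L}}\bigl(u(t_n)+\tau\,\mathcal{B}(\cdots)\bigr)$, and the total remainder is $\mathcal{R}_1(\tau,t_n)=\mathcal{R}_{1,0}(\tau,t_n)+\mathcal{R}_{1,1}(\tau)+e^{\tau\mathcal{L}}\mathcal{R}_{1,2}(\tau)$, which matches the three lines of \eqref{R1fin} once one records that the $e^{\tau\mathcal{L}}$ emerging from the Corollary multiplies the $\mathcal{R}_{1,2}$ piece. Since this statement is a purely algebraic synthesis of identities already established, there is no genuine analytic obstacle here; the only care needed is the consistent time translation (working from $t_n$ via the evolution property) and the correct attribution of each iterated integral to its source lemma. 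The substantive gain—the improved commutator-type error structure—has already been secured in the computations of $\partial_\xi\mathcal{F}$, $\partial_s\mathcal{N}$ and $\partial_s\mathcal{N}_2$ underlying those three results.
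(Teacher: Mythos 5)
Your proposal is correct and follows exactly the paper's route: the corollary is obtained precisely by collecting Lemma \ref{lem1} (time-translated to $t_n$ via the evolution property of the mild solution), Corollary \ref{corOsc1} with $t=\tau$, $v=u(t_n)$, and Lemma \ref{lemf}, with total remainder $\mathcal{R}_1(\tau,t_n)=\mathcal{R}_{1,0}+\mathcal{R}_{1,1}+e^{\tau\mathcal{L}}\mathcal{R}_{1,2}$ matching the three lines of \eqref{R1fin}. You even correctly place the factor $e^{\tau\mathcal{L}}$ on the $\mathcal{R}_{1,2}$ piece and implicitly fix the paper's typographical slip in the second line of \eqref{R1fin}, where $t$ and $v$ should read $\tau$ and $u(t_n)$.
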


Corollary \ref{cor11}  motivates the first-order scheme defined in \eqref{num1Intro} which
locally yields a low-regularity second-order approximation to the exact solution $u(t)$ of \eqref{ev} at time $t = t_{\ell+1} = \tau \cdot (\ell+1)$. The precise error estimates will be given in Section \ref{locErr1} below.

\begin{rem}
Note that in some cases the approximation of the underlying oscillatory structure of~\eqref{ev} can be even improved. We illustrate this on the example of the KdV equation.
For the KdV equation  the principal oscillatory integral \eqref{osc} takes thanks to the expansion \eqref{osci}   the form
 \begin{equation}
 \begin{aligned}
  \Osc(t,\mathcal{L}, v)  & = t \partial_x v^2  +3   \int_0^t \int_0^\xi 
  e^{s \partial_x^3 } \partial_x^2  \left(e^{-s \partial_x^3} \partial_x v \right)^2 \\&
  =  t \partial_x v^2 + 3 \partial_x  \int_0^t   \Osc(\xi, \mathcal{L}, \partial_x v)d\xi.
  \end{aligned}
 \end{equation}
Taking the derivative with respect to time $t$ of the above relation we observe that
 \begin{align*}
 \Osc(t , \mathcal{L}, \partial_x v) =  \partial_x^{-1}\partial_t   \Osc(t,\mathcal{L}, v)  - v^2.
 \end{align*}
This implies
 \begin{align*}
  \Osc(t , \mathcal{L},  v) & = 
   \frac13 \partial_x^{-1}\partial_t   \Osc(t,\mathcal{L}, \partial_x^{-1}v)  -\frac13 (\partial_x^{-1} v)^2\\
   & = \frac13 e^{-t \partial_x^2}  \left(e^{t \partial_x^2}\partial_x^{-1} v\right)^2 
    -\frac13 (\partial_x^{-1} v)^2.
 \end{align*}
Together with  Lemma \ref{lem1},  we hence recover a   low regularity approximation to the  KdV equation which in the periodic setting collapses exactly to the resonance based scheme proposed in~\cite{HS16}.  \red For optimal second order schemes for periodic KdV we refer to \cite{WuZ,BS20}.\black
 \end{rem}
 
 \subsection{Local error estimates}\label{locErr1}
 \begin{proposition}[Local error estimates]
 \label{proplocal1}
 Let us define 
 \begin{equation}
 \label{deferror}  \mathcal{E}(\tau, t_{n})= u(t_{n+1}) - \Phi^\tau_{\text{num}, 1}(u(t_{n})).
 \end{equation}
 Then we have 
 $$ \mathcal{E}(\tau, t_{n})= \mathcal{E}_{1}(\tau, t_{n}) +   \mathcal{E}_{2}(\tau, t_{n}) +  \mathcal{E}_{3}(\tau, t_{n})$$ 
 with  
 \begin{align}
 \label{E1}
 &  \mathcal{E}_{1}(\tau, t_{n})=\int_0^\tau e^{ (\tau-  \xi) \mathcal{L}}  \Big( f \left(u(t_n+\xi), \overline u(t_n+\xi) \right) 
- f \left(e^{\xi\mathcal{L}} u(t_n), e^{ \xi \oD}  \overline u(t_n)\right)
\Big)
d\xi\\
 \label{E2}&  \mathcal{E}_{2}(\tau, t_{n}) = \red \int_0^t \int_0^\xi 
e^{(t-  s)\mathcal{L}}  \mathcal{C}[f, \mathcal{L}] \left(e^{ s\mathcal{L}} v, e^{s\mathcal{L}} e^{\xi \mathcal{A}}  \overline v \right)d s d\xi \\
\label{E3} &   \mathcal{E}_{3}(\tau, t_{n}) = 
  e^{\tau \mathcal{L}} \int_0^\tau \int_0^\xi \mathcal{B} \left( F(u(t_n)) \cdot e^{(\xi  - s) \mathcal{A}}  \mathcal{C}[ G, \mathcal{A}] \left(  e^{ s \mathcal{A}}  \overline u(t_n) \right) \right) ds d\xi.
 \end{align}
 If   Assumption \ref{ass1}, \ref{assnon}, \ref{asscom}, \ref{asstech} hold  and assuming  that
 \begin{equation}
 \label{alpha1borne} \sup_{[0, T]} \|u(t) \|_{\alpha_{1}} \leq C_{T}
 \end{equation}
 (where $\alpha_{1}$  given by Assumption \ref{asscom}) for some $C_{T}>0,$  then there exists $M_{T}>0$ such that for every $\tau \in (0, 1]$
 $$ \| \mathcal{E}(\tau, t_{n}) \|_{\alpha_{0}} \leq M_{T} \tau^2,  \quad  \| \mathcal{E}(\tau, t_{n}) \|_{a_{0}} \leq M_{T} \tau^{1 + \epsilon},  \quad 0 \leq n \tau \leq T$$
 for some $\epsilon >0$.
 \end{proposition}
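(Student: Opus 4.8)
The decomposition $\mathcal{E} = \mathcal{E}_1 + \mathcal{E}_2 + \mathcal{E}_3$ requires no work: it is precisely Corollary \ref{cor11} read at $v = u(t_n)$, since the principal term of that expansion is exactly $\Phi^\tau_{\text{num},1}(u(t_n))$, so that $\mathcal{E}(\tau,t_n) = \mathcal{R}_1(\tau,t_n)$ and the three pieces of $\mathcal{R}_1$ in \eqref{R1fin} are $\mathcal{E}_1$, $\mathcal{E}_2$, $\mathcal{E}_3$. Before estimating I would record the elementary mapping properties used throughout: since $e^{t\mathcal{L}}$ commutes with $\mathcal{L}^s$ and is a contraction on $X$, it is a contraction of every $X^s$; since $\mathcal{A}$ commutes with $\mathcal{L}$ (hence with $\mathcal{L}^s$) and $e^{t\mathcal{A}}$ is unitary, it is an isometry of every $X^s$; and conjugation maps $X^s$ into itself with $\|\overline{u}\|_s \simeq \|u\|_s$, by comparability of the graph norms of $\mathcal{L}$ and $\overline{\mathcal{L}}$. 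Together with the standing hypothesis \eqref{alpha1borne}, these facts bound, uniformly in $s,\xi\in[0,\tau]$, every argument appearing inside $f$, $F$, $G$, $\mathcal{C}[f,\mathcal{L}]$ and $\mathcal{C}[G,\mathcal{A}]$ by $C_T$ in $X^{\alpha_1}$.

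For the $X^{\alpha_0}$ bound I would treat the three terms separately. For $\mathcal{E}_1$ I insert Duhamel's formula \eqref{duh} on $[t_n,t_n+\xi]$, which gives $u(t_n+\xi) - e^{\xi\mathcal{L}}u(t_n) = \int_0^\xi e^{(\xi-\sigma)\mathcal{L}} f(u(t_n+\sigma),\overline{u(t_n+\sigma)})\,d\sigma$; the boundedness bound in \eqref{tech0} together with \eqref{alpha1borne} controls the integrand, so this difference is $O(\xi)$ in $X^{\alpha_0}$, and the Lipschitz bound in \eqref{tech0} turns $\mathcal{E}_1$ into $\int_0^\tau O(\xi)\,d\xi = O(\tau^2)$. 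For $\mathcal{E}_2$ and $\mathcal{E}_3$ I use the crux of the construction: after pulling out the contraction $e^{(\tau-s)\mathcal{L}}$, respectively $e^{\tau\mathcal{L}}$, the integrands are exactly the commutators $\mathcal{C}[f,\mathcal{L}]$ and $\mathcal{B}(F(\cdot)\cdot e^{(\xi-s)\mathcal{A}}\mathcal{C}[G,\mathcal{A}](\cdot))$, which Assumption \ref{asscom}, \eqref{C1}--\eqref{C2}, controls uniformly by $C_T$; since both are double integrals $\int_0^\tau\int_0^\xi ds\,d\xi$, each contributes $O(\tau^2)$. Summing yields $\|\mathcal{E}(\tau,t_n)\|_{\alpha_0}\leq M_T\tau^2$.

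For the $X^{a_0}$ bound the only new difficulty is $\mathcal{E}_2 + \mathcal{E}_3$ in the regime $\alpha_0 < a_0$, where the $X^{\alpha_0}$-commutator estimates of Assumption \ref{asscom} are no longer directly applicable. Here I would keep the refined bound $\|\mathcal{E}_2+\mathcal{E}_3\|_{\alpha_0}\leq C\tau^2$ just obtained and complement it with a crude bound in $X^{\alpha_1}$: combining Corollary \ref{corOsc1} and Lemma \ref{lemf} one has $\mathcal{E}_2 + \mathcal{E}_3 = \Osc(\tau,\mathcal{L},u(t_n),\overline{u(t_n)}) - \tau e^{\tau\mathcal{L}}\mathcal{B}(F(u(t_n))\cdot\varphi_1(\tau\mathcal{A})G(\overline{u(t_n)}))$, where both terms are $O(\tau)$ in $X^{\alpha_1}$ by \eqref{picard} (for $\Osc$) and \eqref{tech0bis} (for the flux term). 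Interpolating the scale $X^s$ between $X^{\alpha_0}$ and $X^{\alpha_1}$ at $a_0 = (1-\theta)\alpha_0 + \theta\alpha_1$, with $\theta = (a_0-\alpha_0)/(\alpha_1-\alpha_0)\in[0,1)$ since $a_0 < \alpha_1$, gives $\|\mathcal{E}_2+\mathcal{E}_3\|_{a_0}\leq C\tau^{2(1-\theta)}\tau^{\theta} = C\tau^{2-\theta} = O(\tau^{1+\epsilon})$ with $\epsilon = 1-\theta > 0$. As $\mathcal{E}_1$ is again $O(\tau^2)$ in $X^{a_0}$ (same computation as above with the $X^{a_0}$-Lipschitz bound in \eqref{picard}), this proves $\|\mathcal{E}(\tau,t_n)\|_{a_0}\leq M_T\tau^{1+\epsilon}$; when $\alpha_0 \geq a_0$ the embedding $\|\cdot\|_{a_0}\leq C\|\cdot\|_{\alpha_0}$ yields directly the stronger rate $\tau^2$.

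The genuinely delicate points are two. First, the refined $X^{\alpha_0}$ estimates of $\mathcal{E}_2,\mathcal{E}_3$ carry the entire low-regularity gain: everything hinges on the integrands being true commutators controllable under $X^{\alpha_1}$ regularity only, rather than the full $X^1$ demanded by the crude bound \eqref{trivial}. Second, the interpolation step for the $X^{a_0}$ bound needs the spaces $X^s = \mathcal{D}(\mathcal{L}^s)$ to form an interpolation scale — harmless in the parabolic case but requiring care when $\mathcal{L}=i\mathcal{M}$ is dispersive and non-sectorial; I would justify the inequality $\|w\|_{a_0}\lesssim \|w\|_{\alpha_0}^{1-\theta}\|w\|_{\alpha_1}^{\theta}$ through the moment inequality for fractional powers of $\mathcal{L}$. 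The $\epsilon>0$ margin, seemingly a technicality, is exactly what later lets a discrete Gronwall argument propagate boundedness of the numerical solution in $X^{a_0}$.
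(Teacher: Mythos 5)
Your proof is correct and takes essentially the same route as the paper: the decomposition via Corollary \ref{cor11}, the Duhamel-plus-\eqref{tech0} treatment of $\mathcal{E}_{1}$, the direct use of \eqref{C1}--\eqref{C2} for $\mathcal{E}_{2},\mathcal{E}_{3}$, and an interpolation between the refined $X^{\alpha_{0}}$ bound and a crude $O(\tau)$ bound in $X^{\alpha_{1}}$ for the $X^{a_{0}}$ estimate. The only (harmless) deviation is that you bound $\mathcal{E}_{1}$ directly in $X^{a_{0}}$ and interpolate only $\mathcal{E}_{2}+\mathcal{E}_{3}$ via the identity with $\Osc(\tau,\mathcal{L},u(t_{n}),\overline{u(t_{n)}})$, whereas the paper writes the whole error $\mathcal{E}$ as the exact Duhamel integral minus the flux term, bounds it by $M_{T}\tau$ in $X^{\alpha_{1}}$ using \eqref{picard} and \eqref{tech0bis}, and interpolates $\mathcal{E}$ itself; both yield $\tau^{2-\theta}$ with $\epsilon=1-\theta>0$.
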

 
 \begin{proof}
 Within the proof $M_{T}$ will stand for a harmless number that depends only on $C_{T}$ which changes from line to line.
 We first estimate $\mathcal{E}_{1}(\tau, t_{n}).$ By using Assumption \ref{ass1} and \eqref{tech0}, we get that
 $$ \|\mathcal{E}_{1}(\tau, t_{n}) \|_{\alpha_{0}} \leq M_{T} \tau \sup_{[t_{n}, t_{n}+ \tau]} \| u(t_{n}+ \xi ) - e^{\xi \mathcal{L}} u(t_{n}) \|_{\alpha_{0}}.$$
 Next by using again the Duhamel formula \eqref{duh} and Assumption with the second part  of  \eqref{tech0}, we get that
 $$  \| u(t_{n}+ \xi ) - e^{\xi \mathcal{L}} u(t_{n})\|_{\alpha_{0}}  \leq M_{T} \tau$$
 thanks to \eqref{alpha1borne}.
  This yields
  \begin{equation}
  \label{E1proof}  \|\mathcal{E}_{1}(\tau, t_{n}) \|_{\alpha_{0}} \leq M_{T} \tau^2.
  \end{equation}

 The estimate of  $\mathcal{E}_{2}(\tau, t_{n}) $ directly follows from Assumption \ref{ass1} and Assumption \ref{asscom} (cf. \eqref{C1}). We get thanks to \eqref{alpha1borne} that
 \begin{equation}
 \label{E2proof}  \|\mathcal{E}_{2}(\tau, t_{n}) \|_{\alpha_{0}} \leq \tau^2 M_{T}.
 \end{equation}
 
 In a similar  way, by using  Assumption \ref{asscom}, \eqref{C2} we get that
$$\|\mathcal{E}_{\red 3}(\tau, t_{n}) \|_{\alpha_{0}}  \leq \tau^2 M_{T}.$$

 Consequently, by combining the last estimate and \eqref{E1proof}, \eqref{E2proof}, we get that
 \begin{equation}
 \label{Ealpha0}  \|\mathcal{E}(\tau, t_{n}) \|_{\alpha_{0}} \leq M_{T} \tau^2.
 \end{equation}
 
 It remains to estimate  $ \|\mathcal{E}(\tau, t_{n}) \|_{a_{0}}.$
 We observe that thanks to \eqref{deferror}, the Duhamel formula \eqref{duh} and  \eqref{num1Intro} we can write
 \begin{align*}
  \mathcal{E}(\tau, t_{n})
  & =     \int_{0}^{\tau } e^{(\tau - s )\mathcal{L}}  f(u(t_n+s), \overline{u}(t_n+s))\, ds
  - \tau e^{\tau \mathcal{L}} \mathcal{B} \left( F(u(t_{n}) )\varphi_{1}(\tau \mathcal{A}) G(\overline{u}(t_{n}))\right)\\
&   =  \int_{t_{n}}^{t_{n+1}} e^{(t_{n+1}- s )\mathcal{L}}  f(u(s), \overline{u}(s))\, ds
  - \tau e^{\tau \mathcal{L}} \mathcal{B} \left( F(u(t_{n}) )\varphi_{1}(\tau \mathcal{A}) G(\overline{u}(t_{n}))\right).
\end{align*}
 We then estimate the two terms in the above right hand side  separately in $X^{\alpha_{1}}$ by using \eqref{alpha1borne} 
 and  Assumption \ref{assnon} for the first one (recall  that $\alpha_{1} \in (a_{0}, a_{1}]$)  and \eqref{tech0bis} for the second one.  
 This yields 
 \begin{equation} 
 \label{Ealpha1}\|\mathcal{E}(\tau, t_{n})\|_{\alpha_{1}}
  \leq 
   \tau M_{T}.
   \end{equation}
    To conclude for the estimate of  $ \|\mathcal{E}(\tau, t_{n}) \|_{a_{0}}$, we observe that if $\alpha_{0} \geq a _{0}$, 
    we can just use the continuous  embedding  $X^{\alpha_{0}} \subset X^{a_{0}}$ and hence take $\epsilon= 1.$
     If  $ \alpha_{0}< a_{0}$,  since we always assume that  $\alpha_{1}>a_{0}$, we can use the interpolation properties
      of the $X^\alpha$ spaces. In particular, we obtain that
      $$  \|\mathcal{E}(\tau, t_{n})\|_{a_{0}} \leq  \|\mathcal{E}(\tau, t_{n})\|_{\alpha_{1}}^\theta \|\mathcal{E}(\tau, t_{n})\|_{\alpha_{0}}^{1- \theta}$$
      for some $\theta \in (0, 1).$ Therefore, thanks to \eqref{Ealpha1}, \eqref{Ealpha0}, we obtain that
    $$   \|\mathcal{E}(\tau, t_{n})\|_{a_{0}} \leq M_{T} \tau^{ \theta}  (\tau^2)^{1- \theta} \leq M_{T} \tau^{2 - \theta}$$
    which is the desired estimate since $2- \theta >1$.
    
 \end{proof}
 With the above proposition at hand we can prove our global error estimate in Theorem \ref{theo1}.
 \subsection{Proof of Theorem \ref{theo1}}
 Let us set $e^{n}= u(t_{n}) - u^n$, $u^n$ being the numerical solution computed with \eqref{num1Intro}.
 By using Proposition \ref{proplocal1}, we get that
  $$ e^{n+1} = \Phi_{num, 1}^\tau
(u(t_{n})) - \Phi_{num, 1}^\tau(u^n) + \mathcal{E}(\tau, t_{n}), \quad e^0= 0.$$
 By using the expression of the scheme, we get that
 \begin{equation}
 \label{eqerror1}
 e^{n+1}= 
  e^{\tau \mathcal{L}} \Big[ e^n+  \tau \left(\Psi^\tau_{\text{num,1}}(u(t_{n}) )-  \Psi^\tau_{\text{num,1}}(u^n)\right)
  \Big]+ \mathcal{E}(\tau, t_{n}).
%
 \end{equation}
 By using \eqref{tech1} and Proposition \ref{proplocal1}, we get that for $0 \leq n \tau \leq T$, 
 $$ \|e^{n+1}\|_{\alpha_{0}} \leq \|e^n\|_{\alpha_{0}} + \tau C_{\alpha_{0}}( \|u(t_{n}) \|_{a_{0}}, \|e^n \|_{a_{0}}) \|e^n\|_{\alpha_{0}}
  + M_{T} \tau^2.$$
  In a similar way, by using \eqref{tech2}, and  Proposition \ref{proplocal1}, we obtain that
 $$ \|e^{n+1}\|_{a_{0}} \leq \|e^n\|_{a_{0}} + \tau C_{a_{0}}( \|u(t_{n}) \|_{a_{0}}, \|e^n \|_{a_{0}}) \|e^n\|_{a_{0}}
  + M_{T} \tau^{ 1 + \epsilon}.$$
   We then easily get by induction that for $\tau$ sufficiently small, we have that
   $$ \sup_{0  \leq n \leq N} \|e^n \|_{a_{0}} <+ \infty, \quad  \sup_{0  \leq n \leq N} \|e^n \|_{\alpha_{0}} \leq M_{T} \tau.$$
    This ends the proof. 
  
 \section{Second-order approximation}\label{sec:2}
To establish a second-order \red approximation \black  at low regularity  we need to iterate Duhamel's formula~\eqref{duh}  up to higher order and subsequently embed the underlying (iterated) dominant  oscillations into our discretisation. Surprisingly, it turns out that the main difficulty thereby lies in providing a higher order discretisation of the principal oscillations~\eqref{osc}.

We commence with a Lemma on the second-order iteration of Duhamel's formula.
 \begin{lemma}\label{lem2b}
At second-order we have
\begin{align}\label{u2p}
u(t) = u_2(t) +  \mathcal{R}_{2,0}(t)
\end{align}
with the second-oder iteration of Duhamel's formula given by
\begin{equation}
\begin{aligned}\label{u2pp}
u_2(t) & =e^{ t \mathcal{L}} u_0 + \Osc(t,\mathcal{L}, u_0,\overline u_0)  + \frac{t^2}{2}e^{ t \mathcal{L}} \left( D_1 f \left(  u_0,   \overline  u_0\right)  \cdot  f(u_0,\overline u_0) +  D_2 f\left(   u_0,   \overline  u_0\right)\cdot \overline{f(u_0,\overline u_0)}  \right) ,
\end{aligned}
\end{equation}
where the principal oscillations $ \Osc(t,\mathcal{L}, u_0,\overline u_0) $ are defined in    \eqref{osc} and   the remainder $ \mathcal{R}_{2,0}(t)$ takes the form
\begin{multline}\label{rem20}
 \mathcal{R}_{2,0}(t)  = 
\int_0^te^{(t-  \xi)  \mathcal{L}} \left[  f \left( e^{ \xi \mathcal{L}} u_0 + \xi f(u_0, \overline u_0)  + \mathcal{R}_{2,a}(0,\xi)
,e^{ \xi  \oD}  \overline  u_0 + \xi   \overline{f(u_0,\overline u_0)} + \overline{\mathcal{R}_{2,a}}(0,\xi)  \right) \right. \\  
 \left.   \quad \quad  -
   f \left( e^{ \xi \mathcal{L}} u_0, e^{ \xi  \oD}  \overline  u_0\right)  \right]\, d \xi   \\
   -  e^{ t \mathcal{L}} 
\int_0^t \xi \left(
D_1 f \left(  u_0,   \overline  u_0\right)  \cdot  f(u_0,\overline u_0) +  D_2 f\left(   u_0,   \overline  u_0\right)\cdot \overline{f(u_0,\overline u_0)}
 \right)d\xi  
\end{multline}
with
\begin{equation}\label{rem2}
\mathcal{R}_{2,a}(t_n,\xi) = 
\int_0^\xi e^{(\xi -  s )\mathcal{L}}  f \left(u(t_n+s), \overline u(t_n+s) \right) ds
-\xi f(u(t_n), \overline u(t_n)).
\end{equation}
\end{lemma}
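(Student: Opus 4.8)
The plan is to establish \eqref{u2p}--\eqref{rem20} by iterating Duhamel's formula \eqref{duh} exactly twice and then reorganising the resulting terms so that the first iteration reproduces the principal oscillations $\Osc$, the explicit quadratic correction produces the second-order term, and everything else is absorbed into the remainder. First I would start from the exact Duhamel identity
\[
u(t) = e^{t\mathcal{L}} u_0 + \int_0^t e^{(t-\xi)\mathcal{L}} f\!\left(u(\xi),\overline{u}(\xi)\right)\, d\xi,
\]
and substitute into the integrand the first-order expansion of $u(\xi)$ itself. Concretely, applying \eqref{duh} again at time $\xi$ (with the understanding coming from \eqref{rem2}) gives
\[
u(\xi) = e^{\xi\mathcal{L}} u_0 + \xi\, f(u_0,\overline{u}_0) + \mathcal{R}_{2,a}(0,\xi),
\]
where $\mathcal{R}_{2,a}(0,\xi)$ is exactly the quantity defined in \eqref{rem2} at $t_n=0$, namely the difference between the true Duhamel integral and its frozen value $\xi f(u_0,\overline{u}_0)$. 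The conjugate $\overline{u}(\xi)$ admits the analogous expansion with $\oD$ in place of $\mathcal{L}$, using Assumption \ref{ass1} to control the semigroup of $\overline{\mathcal{L}}$.

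Next I would insert these two expansions into the argument of $f$ inside the Duhamel integral. The key algebraic manoeuvre is to add and subtract $f\!\left(e^{\xi\mathcal{L}} u_0, e^{\xi\oD}\overline{u}_0\right)$: the subtracted copy, integrated against $e^{(t-\xi)\mathcal{L}}$, is by definition \eqref{osc} precisely $\Osc(t,\mathcal{L}, u_0,\overline{u}_0)$, so it contributes the oscillatory term appearing in \eqref{u2pp}. The remaining difference
\[
f\!\left(e^{\xi\mathcal{L}} u_0 + \xi f(u_0,\overline u_0)+\mathcal{R}_{2,a}, \, e^{\xi\oD}\overline u_0 + \xi\,\overline{f(u_0,\overline u_0)}+\overline{\mathcal{R}_{2,a}}\right) - f\!\left(e^{\xi\mathcal{L}} u_0, e^{\xi\oD}\overline u_0\right)
\]
is exactly the first bracketed integrand of $\mathcal{R}_{2,0}$ in \eqref{rem20}. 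To recover the explicit second-order term $\frac{t^2}{2}e^{t\mathcal{L}}(D_1 f\cdot f + D_2 f\cdot\overline{f})$ I would Taylor-expand this difference to first order in its two arguments around the frozen point $(e^{\xi\mathcal{L}}u_0,e^{\xi\oD}\overline u_0)$; the leading linear contribution, integrated in $\xi$ with the factor $\xi$ from the expansion, yields $\int_0^t \xi\, e^{(t-\xi)\mathcal{L}}(\cdots)\,d\xi$. Replacing $e^{(t-\xi)\mathcal{L}}(\cdots)$ by its value $e^{t\mathcal{L}}(\cdots)$ at $\xi=0$, integrating $\int_0^t \xi\, d\xi = t^2/2$, produces exactly the displayed quadratic term, while the mismatch is returned to the remainder — which accounts for the explicitly subtracted integral $-e^{t\mathcal{L}}\int_0^t \xi(\cdots)d\xi$ in the last line of \eqref{rem20}.

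The only genuine subtlety, and the step I expect to require the most care, is the precise bookkeeping in the final reorganisation: one must verify that the explicit quadratic term $\frac{t^2}{2}e^{t\mathcal{L}}(D_1 f\cdot f+D_2 f\cdot\overline f)$ added in \eqref{u2pp} is matched term-by-term by the compensating integral subtracted at the end of \eqref{rem20}, so that the identity $u(t)=u_2(t)+\mathcal{R}_{2,0}(t)$ holds \emph{exactly} with no leftover pieces. This is purely an algebraic identity — no estimates are needed here, since the actual size of $\mathcal{R}_{2,0}$ (governed by the second-order commutators of Assumption \ref{asscom2}) is the business of the later local-error analysis. Once the matching is confirmed, the lemma follows immediately from the two exact applications of \eqref{duh} together with the definitions \eqref{osc} and \eqref{rem2}, so the proof itself should be short.
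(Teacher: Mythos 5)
Your proposal is correct and takes essentially the same route as the paper: replace $t$ by $\xi$ in \eqref{duh} to write $u(\xi)=e^{\xi\mathcal{L}}u_0+\xi f(u_0,\overline u_0)+\mathcal{R}_{2,a}(0,\xi)$, plug this into the nonlinearity in \eqref{duh}, add and subtract $f\bigl(e^{\xi\mathcal{L}}u_0,e^{\xi\oD}\overline u_0\bigr)$ to produce $\Osc$, and note that the quadratic term in \eqref{u2pp} cancels the subtracted integral in \eqref{rem20} exactly because $\int_0^t\xi\,d\xi=t^2/2$ with a $\xi$-independent integrand. The Taylor-expansion detour in your middle paragraph is superfluous motivation (its ``mismatch'' bookkeeping does not reflect how \eqref{rem20} is actually organised), but your final paragraph correctly reduces the lemma to this purely algebraic matching, which is all the paper's short proof uses.
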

\begin{proof}
First note that by replacing $t$ with $\xi$ in \eqref{duh} we have that
\begin{equation}
\begin{aligned}\label{duh2}
u(\xi) & = e^{ \xi \mathcal{L}} u_0 +  
\int_0^\xi e^{(\xi -  s) \mathcal{L}}  f \left(u(s), \overline u(s) \right) ds  = e^{ \xi \mathcal{L}} u_0 + \xi f(u_0, \overline u_0)  + \mathcal{R}_{2,a}(0,\xi)
\end{aligned}
\end{equation}
with the remainder $ \mathcal{R}_{2,a}(0,\xi)$ given in \eqref{rem2}. The latter  is thereby formally of order $\mathcal{O}\left( \xi^2 \mathcal{L}   f(u,\overline u)\right)$.  The proof then follows by iterating Duhamel's formula, i.e., plugging the expansion \eqref{duh2} into \eqref{duh}, which yields that
\begin{equation*}\label{duh21}
\begin{aligned}
u(t) & = e^{ t \mathcal{L}} u_0 +  
\int_0^te^{(t-  \xi ) \mathcal{L}}  f \left( e^{ \xi \mathcal{L}} u_0 + \xi f(u_0, \overline u_0)  + \mathcal{R}_{2,a}(0,\xi)
,e^{ \xi  \oD}  \overline  u_0 + \xi   \overline{f(u_0,\overline u_0)}  + \overline{\mathcal{R}_{2,a}}(0,\xi) \right) d\red \xi.
\end{aligned}
\end{equation*}
\end{proof}
Thanks to Lemma \ref{lem2b} it thus remains to derive a suitable second-order  approximation to the principal oscillations (cf. \eqref{osc})
\begin{align*}\label{osc2}
\Osc(t,\mathcal{L}, u_0,\overline u_0)
=   \int_0^t e^{(t-  \xi )\mathcal{L}}  f \left(e^{ \xi\mathcal{L}} u_0 ,e^{ \xi  \overline{\mathcal{L}} }\overline u_0\right) d\xi.
\end{align*}
In contrast to the first-order approximation discussed in Section \ref{sec:osc} we now    have to embed iterations of these oscillations  into our discretisation. Following \eqref{filtG} we rewrite the principal oscillations with the aid of
of the filtered function 
\begin{equation}\label{fGf}
\begin{aligned}
 \mathcal{N}(t, s,\xi,v,\overline v) & = 
e^{(t-  s)  \mathcal{L}}  f \left(e^{ s\mathcal{L}} v, e^{ s  \mathcal{L} }  e^{\xi \mathcal{A}}  \overline v \right) =e^{(t-  s)  \mathcal{L}} \mathcal{B}\left( F(e^{ s\mathcal{L}} v) \cdot G\left( e^{ s  \mathcal{L} } e^{\xi \mathcal{A}}  \overline v \right)\right).
\end{aligned}
\end{equation}
With this notation at hand we have that
\begin{align}\label{osc2G}
\Osc(t,\mathcal{L}, v,\overline v)
=    \int_0^t  \mathcal{N}(t,\xi,\xi,v,\overline v) d\xi.
\end{align}
Next we carry out a Taylor series expansion up to second-order of the filtered function $\mathcal{N}(t,s,\xi,v,\overline v)$\red defined in \eqref{fGf} \black around $s=0$. With the the notation $\partial_s \mathcal{N}(t,0,\xi,v,\overline v) =  \partial_s \mathcal{N}(t,s,\xi,v,\overline v) \vert_{s=0} $ this yields that 
\[
 \mathcal{N}(t,s,\xi,v,\overline v) =  \mathcal{N}(t,0,\xi,v,\overline v) + \xi  \partial_s  \mathcal{N}(t,0,\xi,v,\overline v)  + \red \int_0^\xi \int_0^s \black  \partial_{s_1}^2  \mathcal{N}(t,s_1,\xi,v,\overline v)
d{s_1} ds 
\]
where  $ \partial_{s_1}^2  \mathcal{N}(t,s_1,\xi,v,\overline v)$ obeys the improved error structure
\begin{align}\label{2com}
\partial_{s}^2  \mathcal{N}(t,s,\xi,v,\overline v)
= e^{(t-s) \mathcal{L}} \mathcal{C}^2\left[f, \mathcal{L}\right]
 \left(e^{s\mathcal{L}} v, e^{s\mathcal{L}} e^{\xi \mathcal{A}}\overline v \right)
\end{align}
where we recall that 
$$ \mathcal{C}^2\left[f, \mathcal{L}\right] = \mathcal{C}\left[
 \mathcal{C}\left[
f, \mathcal{L}
\right],\mathcal{L}
\right].$$
Together with \eqref{osc2G} we thus obtain 
\begin{equation}\label{osc2G2}
\begin{aligned}
\Osc(t, \mathcal{L}, v,\overline v) & 
=    \int_0^t  \mathcal{N}(t,0,\xi,v,\overline v) d\xi +    \int_0^t  \xi  \partial_s \mathcal{N}(t,0,\xi,v,\overline v)  d\xi 
 \\&   +      \int_0^t \int_0^\xi \int_0^s e^{(t-s_{1}) \mathcal{L}} \mathcal{C}^2\left[f, \mathcal{L}\right]
 \left(e^{s_{1}\mathcal{L}} v, e^{s_{1}\mathcal{L}} e^{\xi \mathcal{A}}\overline v \right)  ds_1 ds d\xi.
\end{aligned}
\end{equation}
Note that in the expansion \eqref{osc2G2} we will have to include the term $\partial_s \mathcal{N}(t,0,\xi,v,\overline v)$ explicitly in our scheme. Thus,   in order to guarante stability of the scheme we next exploit that  formally we have for some $0 \leq\eta\leq t$ that
\begin{equation*}
\partial_s  \mathcal{N}(t,0,\xi,v,\overline v)   =  \frac{\delta_{\zeta }  \mathcal{N}(t,\zeta,\xi,v,\overline v) }{t}\vert_{\zeta = t}+ \mathcal{O}\left(t \partial_{\zeta}^2 \mathcal{N}(t,\eta,\xi,v,\overline v) \right)  
\end{equation*}
with the standard shift operator $$\delta_\zeta g(\zeta) := g(\zeta)- g(0).$$ The latter motivates us to further express the expansion of the oscillations \eqref{osc2G2} as follows
\begin{equation}\label{osc22}
\begin{aligned}
\Osc(t, \mathcal{L}, v,\overline v)  &  =    \int_0^t  \mathcal{N}(t,0,\xi,v,\overline v) d\xi +     \int_0^t \xi \frac{\delta_{\zeta}  \mathcal{N}(t,\zeta,\xi,v,\overline v) }{t}\vert_{\zeta = t}
 d\xi + \mathcal{R}_{ \mathcal{N}}(t)
  \end{aligned}
\end{equation}
(see also \cite{BS20}) where the remainder takes the form
\begin{equation}
\begin{aligned}\label{remG}
\mathcal{R}_{ \mathcal{N}}(t) &=   \int_0^t \xi   \left({\red \partial_s}  \mathcal{N}(t,0,\xi,v,\overline v) -    \frac{\delta_{\zeta}  \mathcal{N}(t,\zeta,\xi,v,\overline v) }{t}\vert_{\zeta = t}
\right) d\xi + 
    \int_0^t \int_0^\xi \int_0^s \partial_{s_1}^2  \mathcal{N}(t,s_1,\xi,v,\overline v)  ds_1 ds d\xi\\
    & = \red -\black  \int_0^t  \xi  \int_0^1 \int_0^{ts} \partial_{s_1}^2 \mathcal{N}(t,s_1,\xi,v,\overline v) ds_1 ds {\red d\xi}
   + 
    \int_0^t \int_0^\xi \int_0^s \partial_{s_1}^2  \mathcal{N}(t,s_1,\xi,v,\overline v)  ds_1 ds d\xi \\
    & =  \red -\black  \int_0^t  \xi  \int_0^1 \int_0^{ts} 
    e^{(t-s_{1}) \mathcal{L}} \mathcal{C}^2\left[f, \mathcal{L}\right]
 \left(e^{s_{1}\mathcal{L}} v, e^{s_{1}\mathcal{L}} e^{\xi \mathcal{A}}\overline v \right) ds_1 ds d \xi 
  \\
  & \quad \quad \quad  +  \int_0^t \int_0^\xi \int_0^s   e^{(t-s_{1}) \mathcal{L}} \mathcal{C}^2\left[f, \mathcal{L}\right]
 \left(e^{s_{1}\mathcal{L}} v, e^{s_{1}\mathcal{L}} e^{\xi \mathcal{A}}\overline v \right) ds_1 ds d\xi.
\end{aligned}
\end{equation}
\begin{rem}\label{par2} Again, by the definition of $\mathcal{N}$ in \eqref{fGf} we obtain for nonlinearities such that  $f(v,\overline v) = f(v)$ that
\[
 \mathcal{N}(t,0,\xi,v,\overline v)   = e^{  t  \mathcal{L}}  f(v,\overline v) \quad\text{and}\quad
 \mathcal{N}(t,t,\xi,v,\overline v)   = f \left(e^{t\mathcal{L}} v, e^{ t  \mathcal{L} }  \overline v\right) \quad \text{for all}\quad \xi.
\]
 This also holds true when  $\mathcal{A}= -\mathcal{L}+\oD = 0$. Thus in these cases
  ($\mathcal{A}= 0$ or $f$ independent of $\overline{v}$)  we do not need to carry out any additional approximation as we simply obtain by   \eqref{osc22} that
\begin{equation*}
\begin{aligned}
\Osc(t, \mathcal{L}, v,\overline v)  &  =  t e^{t   \mathcal{L}}  f(v,\overline v)  +\frac{t}{2}  \delta_t  f \left(e^{t\mathcal{L}} v, e^{ t  \mathcal{L} }  \overline v\right) + \mathcal{R}_{ \mathcal{N}}(t).
  \end{aligned}
\end{equation*}
When  $\mathcal{A} \neq 0$,  we still need to tackle the remaining oscillations in the integral terms of \eqref{osc22}.
\end{rem}
We state a lemma on the integration of the oscillations  in  $\xi  \mathcal{N}(t,s,\xi,v,\overline v)$  which will allow  us to handle the second term in \eqref{osc22} also in the general setting where $\mathcal{A} \neq 0$ and $f$ depends on $\overline{v}$.
\begin{lemma}\label{lem:xiG}
Under Assumption \ref{assnon} we have that
\begin{multline}
\int_0^t \xi \frac{\delta_{\zeta}  \mathcal{N}(t,\zeta,\xi,v,\overline v) }{t}\vert_{\zeta = t} d\xi 
= t \delta_{\xi = t} 
e^{(t-\xi )\mathcal{L}}\mathcal{B} \left( F \left(e^{\xi\mathcal{L}} v\right) \cdot \varphi_2\left(t\mathcal{A}\right)G\left(e^{\xi \mathcal{L}} \overline v\right)\right)  + \mathcal{R}_{d \mathcal{N}_1}(t)\\
\end{multline}
where $\varphi_2(z) = \frac{1}{z}\left(e^z-\varphi_1(z)\right)$ and 
 the remainder takes the form
\begin{equation}\label{remdG}
\begin{aligned}
&  \mathcal{R}_{d \mathcal{N}_1}(t)  =   
\int_0^t \int_0^\xi \frac{\xi}{t}\int_0^t  e^{(t -s_1)\mathcal{L}} 
 \mathcal{C}\left[\Psi_{\xi,s},\mathcal{L}\right]\left( e^{\red s_1\mathcal{L}}  v, e^{s\mathcal{A}}e^{\red s_1\mathcal{L}} \overline v\right)
 ds_1 ds d\xi
  \end{aligned}
\end{equation}
with $\Psi_{\xi,s}( v,w) = 
 \mathcal{B} \left( F\left(  v\right) \cdot e^{(\xi-s) \mathcal{A}}
 \mathcal{C}[G,\mathcal{A}] \left(   w\right)\right)$.
\end{lemma}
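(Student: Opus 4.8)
The plan is to treat the shifted filtered function $\delta_\zeta \mathcal{N}(t,\zeta,\xi,v,\overline v)\vert_{\zeta=t} = \mathcal{N}(t,t,\xi,v,\overline v) - \mathcal{N}(t,0,\xi,v,\overline v)$ by a two-stage filtering. First I would remove the $\mathcal{A}$-oscillations sitting inside $G$, exactly as in the proof of Lemma~\ref{lemf}; this produces the $\varphi_2$ main term together with an intermediate object built from $\mathcal{C}[G,\mathcal{A}]$. Then I would remove the residual $\mathcal{L}$-oscillations by a fundamental-theorem-of-calculus argument in an auxiliary variable $s_1$, which generates the outer commutator $\mathcal{C}[\,\cdot\,,\mathcal{L}]$ and hence the stated remainder $\mathcal{R}_{d\mathcal{N}_1}(t)$.

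Concretely, for $\zeta \in \{0,t\}$ I would use $[\mathcal{A},\mathcal{L}]=0$ to write $G(e^{\zeta\mathcal{L}}e^{\xi\mathcal{A}}\overline v) = G(e^{\xi\mathcal{A}}e^{\zeta\mathcal{L}}\overline v)$ and apply the filtering identity from Lemma~\ref{lemf} with $\overline v$ replaced by $e^{\zeta\mathcal{L}}\overline v$, namely
\[
G\big(e^{\xi\mathcal{A}}e^{\zeta\mathcal{L}}\overline v\big) = e^{\xi\mathcal{A}}G\big(e^{\zeta\mathcal{L}}\overline v\big) + \int_0^\xi e^{(\xi-s)\mathcal{A}}\,\mathcal{C}[G,\mathcal{A}]\big(e^{s\mathcal{A}}e^{\zeta\mathcal{L}}\overline v\big)\,ds.
\]
Inserting this into $\mathcal{N}(t,\zeta,\xi,v,\overline v)$ and using the linearity of $\mathcal{B}$ splits each term into a principal contribution $e^{(t-\zeta)\mathcal{L}}\mathcal{B}(F(e^{\zeta\mathcal{L}}v)\cdot e^{\xi\mathcal{A}}G(e^{\zeta\mathcal{L}}\overline v))$ and a contribution $e^{(t-\zeta)\mathcal{L}}\int_0^\xi \Psi_{\xi,s}(e^{\zeta\mathcal{L}}v, e^{s\mathcal{A}}e^{\zeta\mathcal{L}}\overline v)\,ds$, with $\Psi_{\xi,s}$ as in the statement. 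For the principal contributions the only $\xi$-dependence is the factor $e^{\xi\mathcal{A}}$, so integrating against $\tfrac{\xi}{t}\,d\xi$ and using the functional-calculus identity $\int_0^t \xi\, e^{\xi\mathcal{A}}\,d\xi = t^2\varphi_2(t\mathcal{A})$ (which follows from $\int_0^t \xi e^{\xi z}\,d\xi = \tfrac{t}{z}(e^{tz}-\varphi_1(tz)) = t^2\varphi_2(tz)$) collapses the $\zeta=t$ and $\zeta=0$ principal terms precisely into $t\,\delta_{\xi=t}\, e^{(t-\xi)\mathcal{L}}\mathcal{B}(F(e^{\xi\mathcal{L}}v)\cdot \varphi_2(t\mathcal{A})G(e^{\xi\mathcal{L}}\overline v))$.

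It then remains to identify the difference of the two $\Psi_{\xi,s}$-contributions with $\mathcal{R}_{d\mathcal{N}_1}(t)$. For fixed $\xi,s$ I would set $h(s_1)=e^{(t-s_1)\mathcal{L}}\Psi_{\xi,s}(e^{s_1\mathcal{L}}v, e^{s\mathcal{A}}e^{s_1\mathcal{L}}\overline v)$, so that $h(t)-h(0)$ is exactly the $\zeta=t$ minus $\zeta=0$ difference, and write it as $\int_0^t \partial_{s_1}h(s_1)\,ds_1$. Differentiating $h$ and using once more $[\mathcal{A},\mathcal{L}]=0$ to turn $\partial_{s_1}(e^{s\mathcal{A}}e^{s_1\mathcal{L}}\overline v)$ into $\mathcal{L}$ acting on the full second argument, the very definition~\eqref{DlowIntro} of $\mathcal{C}[\,\cdot\,,\mathcal{L}]$ gives $\partial_{s_1}h(s_1)=e^{(t-s_1)\mathcal{L}}\mathcal{C}[\Psi_{\xi,s},\mathcal{L}](e^{s_1\mathcal{L}}v, e^{s\mathcal{A}}e^{s_1\mathcal{L}}\overline v)$; reinserting the $\tfrac{\xi}{t}$ weight and integrating in $s$ and $\xi$ yields $\mathcal{R}_{d\mathcal{N}_1}(t)$. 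All interchanges of integration are justified by Fubini, since the integrands are continuous on a compact domain and the semigroups as well as $\varphi_1,\varphi_2$ are bounded.

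The main obstacle is the bookkeeping of the two nested filters and, above all, checking that the second differentiation really collapses into the single commutator $\mathcal{C}[\Psi_{\xi,s},\mathcal{L}]$ with the mixed arguments $e^{s_1\mathcal{L}}v$ and $e^{s\mathcal{A}}e^{s_1\mathcal{L}}\overline v$. This is precisely where the commutativity of $\mathcal{A}$ and $\mathcal{L}$ from Assumption~\ref{ass1} is indispensable, since it is what lets the derivative of the filtered second argument appear as $\mathcal{L}$ times that argument, matching the structure required in~\eqref{DlowIntro}.
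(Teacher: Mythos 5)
Your proposal is correct and follows essentially the same route as the paper: split $\delta_{\zeta}\mathcal{N}\vert_{\zeta=t}$ into the $\zeta=t$ and $\zeta=0$ contributions, filter out the $\mathcal{A}$-oscillations inside $G$ exactly as in the proof of Lemma \ref{lemf} so that the principal parts collapse via $\int_0^t \xi\, e^{\xi\mathcal{A}}\,d\xi = t^2\varphi_2(t\mathcal{A})$ into the stated $\varphi_2$ term, and then apply the fundamental theorem of calculus in the auxiliary variable $s_1$ to the difference of the $\Psi_{\xi,s}$-contributions, using $[\mathcal{L},\mathcal{A}]=0$ to identify the derivative with $e^{(t-s_1)\mathcal{L}}\mathcal{C}[\Psi_{\xi,s},\mathcal{L}]\bigl(e^{s_1\mathcal{L}}v, e^{s\mathcal{A}}e^{s_1\mathcal{L}}\overline v\bigr)$. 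This is precisely the paper's argument, so nothing further is needed.
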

\begin{proof} 
Note that
\begin{align*}
\int_0^t \xi \frac{\delta_{\zeta}  \mathcal{N}(t,\zeta,\xi,v,\overline v) }{t}\vert_{\zeta = t} d\xi 
=\frac{1}{t}  \int_0^t \xi  \Big(\mathcal{N}(t,t,\xi,v,\overline v) -  \mathcal{N}(t,0,\xi,v,\overline v)  \Big)d\xi  .
\end{align*}
With Assumption \ref{assnon} at hand we obtain similarly to \eqref{rem12} that
\begin{equation*}
\begin{aligned}
\int_0^t \xi &  \mathcal{N}(t,t,\xi,v,\overline v) d\xi 
 =    \int_0^t \xi \mathcal{B} \left( F\left(e^{t\mathcal{L}} v\right) \cdot G\left( 
e^{t \mathcal{L}} e^{\xi \mathcal{A}} \overline v\right) \right)d \xi\\
&  =    \int_0^t \xi \mathcal{B} \left( F\left(e^{t\mathcal{L}} v\right) \cdot e^{\xi \mathcal{A}}\left[ e^{-\xi \mathcal{A}} G\left( 
e^{t \mathcal{L}} e^{\xi \mathcal{A}} \overline v\right) \right]\right)d \xi\\
&  =    \int_0^t \xi \mathcal{B} \left( F\left(e^{t\mathcal{L}} v\right) \cdot e^{\xi \mathcal{A}}
\left[ 
 G\left( 
e^{t \mathcal{L}}  \overline v\right) + \int_0^\xi \partial_s\left(
e^{-s \mathcal{A}} G\left( 
e^{t \mathcal{L}} e^{s \mathcal{A}} \overline v\right) \right)ds
\right]
\right)d \xi\\
& =  \int_0^t \xi \mathcal{B} \left( F\left(e^{t\mathcal{L}} v\right) \cdot e^{\xi \mathcal{A}}
G(e^{t\mathcal{L}} \overline v) \right)d \xi  + 
 \int_0^t \int_0^\xi \xi \mathcal{B} \left( F\left(e^{t\mathcal{L}} v\right) \cdot e^{(\xi-s) \mathcal{A}}
 \mathcal{C}[G,\mathcal{A}] \left(e^{t \mathcal{L}} e^{s\mathcal{A} }\overline v\right)\right) ds d\xi
\\
& =   t^2 \mathcal{B} \left( F\left(e^{t\mathcal{L}} v\right) \cdot \varphi_2(t \mathcal{A})
G(e^{t\mathcal{L}} \overline v) \right)d \xi  + 
 \int_0^t \int_0^\xi \xi \mathcal{B} \left( F\left(e^{t\mathcal{L}} v\right) \cdot e^{(\xi-s) \mathcal{A}}
 \mathcal{C}[G,\mathcal{A}] \left(e^{t \mathcal{L}} e^{s\mathcal{A} }\overline v\right)\right) ds d\xi.
\end{aligned}
\end{equation*}
In the same manner we obtain 
\begin{equation*}
\begin{aligned}\label{rem22}
\int_0^t \xi &  \mathcal{N}(t,0,\xi,v,\overline v) d\xi =e^{t\mathcal{L}} \int_0^t \xi \mathcal{B} \left( F\left( v\right) \cdot G\left( 
 e^{\xi \mathcal{A}} \overline v\right) \right)d \xi\\
 &  =   t^2e^{t\mathcal{L}}   \mathcal{B} \left( F\left(v\right) \cdot \varphi_2(t \mathcal{A})
G( \overline v) \right)d \xi  + e^{t\mathcal{L}} 
 \int_0^t \int_0^\xi \xi \mathcal{B} \left( F\left(  v\right) \cdot e^{(\xi-s) \mathcal{A}}
 \mathcal{C}[G,\mathcal{A}] \left( e^{s\mathcal{A} }\overline v\right)\right) ds d\xi.
\end{aligned}
\end{equation*}
Using that
\begin{align*}
 & \int_0^t \int_0^\xi \frac{\xi}{t} \Big[ \mathcal{B} \left( F\left(e^{t\mathcal{L}} v\right) \cdot e^{(\xi-s) \mathcal{A}}
 \mathcal{C}[G,\mathcal{A}] \left(e^{t \mathcal{L}} e^{s\mathcal{A} }\overline v\right)\right)
  - e^{t\mathcal{L}} 
 \mathcal{B} \left( F\left(  v\right) \cdot e^{(\xi-s) \mathcal{A}}
 \mathcal{C}[G,\mathcal{A}] \left( e^{s\mathcal{A} }\overline v\right)\right)\Big]ds d\xi
\\&
=  \int_0^t \int_0^\xi \frac{\xi}{t} \int_0^t \partial_{s_1} e^{(t -s_1)\mathcal{L}}  \mathcal{B} \left( F\left(e^{s_1 \mathcal{L}} v\right) \cdot e^{(\xi-s) \mathcal{A}}
 \mathcal{C}[G,\mathcal{A}] \left(e^{s_1 \mathcal{L}} e^{s\mathcal{A} }\overline v\right)\right) d_{s_1} ds d\xi \\
 &= \int_0^t \int_0^\xi \frac{\xi}{t}\int_0^t  e^{(t -s_1)\mathcal{L}} 
 \mathcal{C}\left[\Psi_{\xi,s},\mathcal{L}\right]\left( e^{\red s_1\mathcal{L}}  v, e^{s\mathcal{A}}e^{\red s_1\mathcal{L}} \overline v\right)
 ds_1 ds d\xi
\end{align*}
with $\Psi_{\xi,s}( v,w) = 
 \mathcal{B} \left( F\left(  v\right) \cdot e^{(\xi-s) \mathcal{A}}
 \mathcal{C}[G,\mathcal{A}] \left(   w\right)\right)$ concludes the proof.
\end{proof}
 Finally, to handle the first term  in \eqref{osc22}, we need to derive a suitable second-order approximation to
 $$
 \int_0^t  \mathcal{N}(t,0,\xi,v,\overline v)d\xi = e^{t\mathcal{L}}  \int_0^t \mathcal{B} \left( F\left( v\right) \cdot G\left( 
 e^{\xi \mathcal{A}} \overline v\right) \right) d\xi.
 $$  
 \begin{lemma}\label{lem:G}
 Under Assumption \ref{assnon} we have that
 \begin{equation}\label{rem24}
 \begin{aligned}
\int_0^t   \mathcal{N}(t,0,\xi,v,\overline v) d \xi 
&= t e^{t\mathcal{L}} \mathcal{B}\left(F (v) \cdot \varphi_1\left( t \mathcal{A}\right) G(\overline v) \right)\\&  +  
 t e^{t\mathcal{L}}  \mathcal{B}\left(F (v) \cdot \varphi_2\left( t \mathcal{A}\right) \delta_t \left( e^{- t\mathcal{A}}G \left(  e^{t \mathcal{A} }  \overline v\right) \right)  \right) +  \mathcal{R}_{d\mathcal{N}_2}(t)
\end{aligned}
\end{equation}
where the remainder $ \mathcal{R}_{d\mathcal{N}}(t)$ is given by 
\begin{equation}
\begin{aligned}\label{remF2}
\mathcal{R}_{d\mathcal{N}_2}(t) 
& =
e^{t\mathcal{L}}  \int_0^t \int_0^\xi \int_0^{\xi_1}  \mathcal{B} \left( F\left(v\right) \cdot 
e^{(\xi-\xi_{2})  \mathcal{A}} \mathcal{C}^2\left[
G, \mathcal{A}\right]
\left( e^{\xi_{2}\mathcal{A}} \overline v \right)
\right) d\xi_2 d \xi_1  d\xi\\
& + e^{t \mathcal{L}}  \int_0^t \xi \int_0^1  \int_0^{t s}  \mathcal{B} \left( F\left( v\right) \cdot   
\Big[\
e^{(\xi-s_{1})  \mathcal{A}} \mathcal{C}^2\left[
G, \mathcal{A}
\right]
\left( e^{s_{1}\mathcal{A}} \overline v\right) ds_1 ds\Big]
\right)d\xi.
\end{aligned}
\end{equation}
 \end{lemma}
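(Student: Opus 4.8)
The plan is to mimic the first-order computation of Lemma \ref{lemf}, but to push the Taylor expansion of the filtered nonlinearity one order further so that the leftover is governed by the \emph{iterated} commutator $\mathcal{C}^2[G,\mathcal{A}]$. First I would factor out $e^{t\mathcal{L}}$ and use the tensorized structure \eqref{nonlin} together with the linearity of $\mathcal{B}$ to write $\int_0^t\mathcal{N}(t,0,\xi,v,\overline v)\,d\xi = e^{t\mathcal{L}}\int_0^t\mathcal{B}(F(v)\cdot G(e^{\xi\mathcal{A}}\overline v))\,d\xi$, and then reintroduce the filtered function $\mathcal{N}_2(\xi)=e^{-\xi\mathcal{A}}G(e^{\xi\mathcal{A}}\overline v)$ already used in the proof of Lemma \ref{lemf}, so that $G(e^{\xi\mathcal{A}}\overline v)=e^{\xi\mathcal{A}}\mathcal{N}_2(\xi)$. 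This reduces everything to approximating $\int_0^t\mathcal{B}(F(v)\cdot e^{\xi\mathcal{A}}\mathcal{N}_2(\xi))\,d\xi$.

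Second, I would Taylor-expand $\mathcal{N}_2$ to second order around $\xi=0$, namely $\mathcal{N}_2(\xi)=\mathcal{N}_2(0)+\xi\,\partial_s\mathcal{N}_2(0)+\int_0^\xi\int_0^{\xi_1}\partial_{\xi_2}^2\mathcal{N}_2(\xi_2)\,d\xi_2\,d\xi_1$. The key algebraic fact, already established at first order in Lemma \ref{lemf}, is $\partial_s\mathcal{N}_2(s)=e^{-s\mathcal{A}}\mathcal{C}[G,\mathcal{A}](e^{s\mathcal{A}}\overline v)$; since $\mathcal{C}[G,\mathcal{A}]$ is again a function of a single argument and $e^{-s\mathcal{A}}$ commutes with $\mathcal{A}$, differentiating once more yields $\partial_s^2\mathcal{N}_2(s)=e^{-s\mathcal{A}}\mathcal{C}^2[G,\mathcal{A}](e^{s\mathcal{A}}\overline v)$ by the definition \eqref{CC2}. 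Inserting the zeroth- and first-order Taylor terms and invoking the scalar identities $\int_0^t e^{\xi\mathcal{A}}\,d\xi=t\varphi_1(t\mathcal{A})$ and $\int_0^t\xi\,e^{\xi\mathcal{A}}\,d\xi=t^2\varphi_2(t\mathcal{A})$ (valid through the functional calculus since $\mathcal{A}=i\mathcal{M}$ with $\mathcal{M}$ self-adjoint and $\varphi_1,\varphi_2$ are bounded on $i\mathbb{R}$) produces the main $\varphi_1$ term together with the provisional contribution $t^2\mathcal{B}(F(v)\cdot\varphi_2(t\mathcal{A})\,\partial_s\mathcal{N}_2(0))$, while the quadratic Taylor remainder gives exactly the first triple integral of $\mathcal{R}_{d\mathcal{N}_2}$ in \eqref{remF2}.

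Third — and this is the step I expect to be the crux — I would stabilise the explicit first-order term, since $\partial_s\mathcal{N}_2(0)=\mathcal{C}[G,\mathcal{A}](\overline v)$ cannot be kept in an implementable scheme. Exactly as in the passage from \eqref{osc2G2} to \eqref{osc22}, I would replace $t\,\partial_s\mathcal{N}_2(0)$ by the computable shift $\delta_t\mathcal{N}_2=\mathcal{N}_2(t)-\mathcal{N}_2(0)=\delta_t(e^{-t\mathcal{A}}G(e^{t\mathcal{A}}\overline v))$, using the identity $t\,\partial_s\mathcal{N}_2(0)=\delta_t\mathcal{N}_2-\int_0^t\int_0^s\partial_\sigma^2\mathcal{N}_2(\sigma)\,d\sigma\,ds$. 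The leading piece then becomes the stated $t\,\mathcal{B}(F(v)\cdot\varphi_2(t\mathcal{A})\delta_t(e^{-t\mathcal{A}}G(e^{t\mathcal{A}}\overline v)))$, and after rescaling the outer time variable so that the $s$-integral runs over $[0,1]$ the correction $-\int_0^t\int_0^s\partial_\sigma^2\mathcal{N}_2$ produces, again via $\partial_\sigma^2\mathcal{N}_2=e^{-\sigma\mathcal{A}}\mathcal{C}^2[G,\mathcal{A}](e^{\sigma\mathcal{A}}\overline v)$, the second integral of $\mathcal{R}_{d\mathcal{N}_2}$ in the form $\int_0^1\int_0^{ts}(\cdots)$. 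Collecting the three contributions and multiplying back by $e^{t\mathcal{L}}$ would then give the claimed identity.

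The hard part will be the bookkeeping of the $\varphi_2$-weight together with the change of variables in the stabilisation correction: one must check that the factor $t^2\varphi_2(t\mathcal{A})$ recombines with the double time integral so that the surviving error is genuinely cubic in time and carries \emph{two} copies of $\mathcal{C}[\,\cdot\,,\mathcal{A}]$, i.e. $\mathcal{C}^2[G,\mathcal{A}]$, rather than one. Only this guarantees, through Assumption \ref{asscom2}, that $\mathcal{R}_{d\mathcal{N}_2}$ can be controlled at the reduced regularity $\alpha_2$. The remaining manipulations are the routine linearity and commutation properties of $\mathcal{B}$, $\mathcal{A}$ and the group $e^{s\mathcal{A}}$ already exploited in Lemmas \ref{lemf} and \ref{lem:xiG}.
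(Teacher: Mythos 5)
Your proposal is correct and follows essentially the same route as the paper's proof: factoring out $e^{t\mathcal{L}}$ and filtering with $\mathcal{N}_2(\xi)=e^{-\xi\mathcal{A}}G\left(e^{\xi\mathcal{A}}\overline v\right)$, Taylor-expanding to second order with $\partial_s^2\mathcal{N}_2(s)=e^{-s\mathcal{A}}\mathcal{C}^2[G,\mathcal{A}]\left(e^{s\mathcal{A}}\overline v\right)$, using $\int_0^t e^{\xi\mathcal{A}}\,d\xi=t\varphi_1(t\mathcal{A})$ and $\int_0^t\xi\, e^{\xi\mathcal{A}}\,d\xi=t^2\varphi_2(t\mathcal{A})$ via the functional calculus, and stabilising $\partial_s\mathcal{N}_2(0)$ by the finite difference $\frac1t\delta_t\mathcal{N}_2$ exactly as the paper does. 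The only (harmless) discrepancy is the sign of the stabilisation correction: your minus sign is what the identity $t\,\partial_s\mathcal{N}_2(0)=\delta_t\mathcal{N}_2-\int_0^t\int_0^s\partial_\sigma^2\mathcal{N}_2(\sigma)\,d\sigma\,ds$ actually produces, whereas \eqref{remF2} displays that term with a plus; since only norms of the remainder enter the error analysis, this does not affect the result.
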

 \begin{proof}
 We carry out a Taylor series expansion in the spirit of \eqref{rem12}. At higher-order this yields with the notation $   \mathcal{N}_2(t,\mathcal{A},v) =  e^{-t\mathcal{A}}  G\left(  e^{t\mathcal{A}} \overline v\right)$ that
\begin{equation*}
\begin{aligned}
& \int_0^t   \mathcal{N}(t,0,\xi,v,\overline v) d \xi 
 =  e^{t\mathcal{L}}  \int_0^t \mathcal{B} \left( F\left( v\right) \cdot G\left( 
 e^{\xi \mathcal{A}} \overline v\right) \right) d\xi =  e^{t\mathcal{L}}  \int_0^t \mathcal{B} \left( F\left( v\right) \cdot  e^{\xi \mathcal{A}}  \mathcal{N}_2(\xi,\mathcal{A},v)\right) d\xi\\
  & =  e^{t\mathcal{L}}  \int_0^t \mathcal{B} \left( F\left( v\right) \cdot  e^{\xi \mathcal{A}} 
   \left[  
   G(\overline v) + \xi \left[\partial_\xi \mathcal{N}_2({\red \xi} ,\mathcal{A},v)  \right]_{\xi = 0}
+ \int_0^\xi \int_0^{\xi_1} \partial_{\xi_2}^2\mathcal{N}_2({\red \xi_2} ,\mathcal{A},v)   d\xi_2 d \xi_1 
  \right]\right) d\xi.
\end{aligned}
\end{equation*}
Next we use that  \begin{align*}
\partial_\xi \mathcal{N}_2(\xi,\mathcal{A},v) \vert_{\xi = 0}
- \red \frac{1}{t}\delta_\xi \mathcal{N}_2(\xi,\mathcal{A},v)\vert_{\xi = t} \black = 
\red - \black \int_0^1  \int_0^{ts} \partial_{s_1}^2 \mathcal{N}_2\red ( s_1,\mathcal{A},v) \black ds_1 ds
\end{align*}
as well as 
\begin{equation*}
\partial_t^2   \mathcal{N}_2(t,\mathcal{A},v) =  \red \partial_t^2\black\left( e^{-t\mathcal{A}}  G\left(  e^{t\mathcal{A}} \overline v\right) \right)
=  e^{-t\mathcal{A}}  \mathcal{C}^2[G,\mathcal{A}]( e^{t\mathcal{A}} \overline v),
\end{equation*}
where $ \mathcal{C}^2\left[f, \mathcal{L}\right] = \mathcal{C}\left[
 \mathcal{C}\left[
f, \mathcal{L}
\right],\mathcal{L}
\right].$
By the linearity of $\mathcal{B}$ this yields the assertion.
\end{proof}
Note that in the above lemma we again used a finite difference approximation for $\left[\partial_\xi e^{-\xi \mathcal{A}}  G\left( 
 e^{\xi \mathcal{A}} \overline v\right)\right]_{\xi = 0}$ in order to guarantee stability of the scheme. 
 
 Collecting the results in Lemma \ref{lem2b} (in particular \eqref{rem2}), Lemma \ref{lem:xiG} and \ref{lem:G}   yields together with \eqref{osc22} and \eqref{remG} the following  expansion of the exact solution.
\begin{cor}\label{cor:2}
The exact solution $u$ of \eqref{ev} allows with the notation $f^n = f(u(t_n), \overline u(t_n))$ the expansion 
\begin{equation}\label{u2order}
\begin{aligned}
u(t_n+\tau) & = e^{ \tau \mathcal{L}} \red u(t_n)\black+
  \tau e^{\tau \mathcal{L}} \mathcal{B}\left(F (u(t_n)) \cdot \varphi_1\left( \tau \mathcal{A}\right) G(\overline u(t_n)) \right)  \\&+  
 \tau e^{\tau \mathcal{L}}  \mathcal{B}\left(F (u(t_n)) \cdot \varphi_2\left( \tau \mathcal{A}\right) \delta_\tau \left( e^{- \tau\mathcal{A}}G \left(  e^{\tau  \mathcal{A} }  \overline u(t_n)\right) \right)   \right)\\
&
+  \tau \delta_{\xi = \tau} 
e^{(\tau-\xi )\mathcal{L}}\mathcal{B} \left( F \left(e^{\xi\mathcal{L}} u(t_n)\right) \cdot \varphi_2\left(\tau \mathcal{A}\right)G\left(e^{\xi \mathcal{L}} \overline u(t_n)\right)\right)  
  \\& + \frac{\tau^2}{2}e^{ \tau \mathcal{L}} \left(  D_1 f^n \cdot f^n+ D_2 f^n  \cdot \overline{f^n} \right) \\& +  \mathcal{R}_2(\tau,t_n)
\end{aligned}
\end{equation}
where  the remainder $\mathcal{R}_2(\tau,t_n)$ takes  the form
\begin{equation}\label{R2fin}
\begin{aligned}
  \mathcal{R}_2(\tau,t_n)&   = 
\int_0^\tau e^{(\tau -  \xi ) \mathcal{L}}  \left[ f \left( e^{ \xi \mathcal{L}} u(t_n) + \xi f^n + \mathcal{R}_{2,a}(t_n,\xi)
,e^{ \xi  \oD}  \overline  u(t_n) + \xi   \overline{f^n} + \overline{\mathcal{R}_{2,a}}(t_n,\xi) \right)  \right.\\ &
\left. \quad \quad \quad \quad \quad \quad \quad - 
  f \left( e^{ \xi \mathcal{L}} u(t_n), e^{ \xi  \oD}  \overline  u(t_n)\right) \right] d\xi  \\
  & -  e^{ \tau \mathcal{L}} 
\int_0^\tau \xi \left(   D_1 f^n \cdot f^n  +    D_2 f^n \cdot  \overline{f^n} \right)d\xi  \\
& \red +\black   \int_0^\tau  \xi  \int_0^1 \int_0^{\tau s} 
e^{(\tau-s_{1}) \mathcal{L}}
 \mathcal{C}^2\left[
f, \mathcal{L} \right]
\left(e^{s_{1}\mathcal{L}} u(t_{n}), e^{s_{1}\mathcal{L}} e^{\xi \mathcal{A}}\overline u(t_{n}) \right)\red ds_1 ds d\xi \\& 
\red +\black
 \int_0^\tau \int_0^\xi \int_0^s
e^{(\tau-s_{1}) \mathcal{L}}
 \mathcal{C}^2\left[
f, \mathcal{L} \right]
\left(e^{s_{1}\mathcal{L}} u(t_{n}), e^{s_{1}\mathcal{L}} e^{\xi \mathcal{A}}\overline u(t_{n}) \right) ds_1 ds d\xi\\
&+ e^{\tau\mathcal{L}}  \int_0^\tau \int_0^\xi \int_0^{\xi_1}  \mathcal{B} \left( F\left( u(t_n)\right) \cdot 
e^{(\xi-\xi_{2})  \mathcal{A}} \mathcal{C}^2\left[
G, \mathcal{A}\right]
\left( e^{\xi_{2}\mathcal{A}} \overline u(t_{n}) \right)
\right) d\xi_2 d \xi_1  d\xi\\
& + e^{\tau \mathcal{L}}  \int_0^\tau \xi \int_0^1  \int_0^{\tau s}  \mathcal{B} \left( F\left( u(t_{n})\right) \cdot   
\Big[\
e^{(\xi-s_{1})  \mathcal{A}} \mathcal{C}^2\left[
G, \mathcal{A}
\right]
\left( e^{s_{1}\mathcal{A}} \overline u(t_{n})\right) ds_1 ds\Big]
\right)d\xi\\
 &+  \int_0^\tau \int_0^\xi \frac{\xi}{\tau}\int_0^\tau  e^{(\tau -s_1)\mathcal{L}} 
 \mathcal{C}\left[\Psi_{\xi -s},\mathcal{L}\right]\left( e^{\red s_1\mathcal{L}}  u(t_n), e^{s\mathcal{A}}e^{\red s_1\mathcal{L}} \overline u(t_n)\right)
 ds_1 ds d\xi
 \end{aligned}
\end{equation}
with
\begin{equation}\label{Remainder2}
\begin{aligned}
&\mathcal{R}_{2,a}(t_n,\xi) =  
\int_0^\xi e^{(\xi -  s) \mathcal{L}}  f \left(u(t_n+s), \overline u(t_n+s) \right) ds
-\xi f(u(t_n), \overline u(t_n))\\
& \Psi_{\xi}( v,w) = 
 \mathcal{B} \left( F\left(  v\right) \cdot e^{\xi \mathcal{A}}
 \mathcal{C}[G,\mathcal{A}] \left(   w\right)\right), \, \xi \in \mathbb{R}.
\end{aligned}
\end{equation}

\end{cor}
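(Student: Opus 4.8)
The plan is to obtain the expansion \eqref{u2order} by pure assembly: I feed the three-term decomposition \eqref{osc22} of the principal oscillations into the second-order Duhamel iteration of Lemma \ref{lem2b}, and then dispatch each of the three resulting terms to the integration lemmas already proved. No new estimate is needed, so the proof is bookkeeping; the only genuine care lies in tracking the time-shift to the interval $[t_n,t_n+\tau]$, in keeping the $e^{\tau\mathcal{L}}$ prefactors attached to the correct pieces, and in checking that the aggregated remainder matches \eqref{R2fin} line by line.

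First I would apply Lemma \ref{lem2b} on $[t_n,t_n+\tau]$ instead of $[0,\tau]$. Using the semigroup property in the Duhamel formula \eqref{duh} (equivalently, replacing the initial datum $u_0$ by $u(t_n)$ and the remainder $\mathcal{R}_{2,a}(0,\cdot)$ by the shifted $\mathcal{R}_{2,a}(t_n,\cdot)$ of \eqref{rem2}), this gives
$$ u(t_n+\tau) = e^{\tau\mathcal{L}} u(t_n) + \Osc(\tau,\mathcal{L},u(t_n),\overline{u(t_n)}) + \frac{\tau^2}{2} e^{\tau\mathcal{L}}\left(D_1 f^n\cdot f^n + D_2 f^n\cdot\overline{f^n}\right) + \mathcal{R}_{2,0}(\tau,t_n), $$
where $\mathcal{R}_{2,0}(\tau,t_n)$ is \eqref{rem20} with $u_0$ replaced by $u(t_n)$. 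This already supplies the $e^{\tau\mathcal{L}}u(t_n)$ term, the $\tfrac{\tau^2}{2}$ main term, and, upon reading off $\mathcal{R}_{2,0}$, precisely the first two lines of the target remainder \eqref{R2fin}.

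Next I would substitute \eqref{osc22} for $\Osc(\tau,\mathcal{L},u(t_n),\overline{u(t_n)})$, which splits it into $\int_0^\tau \mathcal{N}(\tau,0,\xi,\cdot)\,d\xi$, the weighted $\delta_\zeta$-term, and $\mathcal{R}_{\mathcal{N}}(\tau)$. To the first piece I apply Lemma \ref{lem:G}, yielding the $\varphi_1$ main term, the $\varphi_2\,\delta_\tau$ main term, and the remainder $\mathcal{R}_{d\mathcal{N}_2}(\tau)$ of \eqref{remF2} (the two $\mathcal{C}^2[G,\mathcal{A}]$ integrals, already carrying their $e^{\tau\mathcal{L}}$ prefactors). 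To the second piece I apply Lemma \ref{lem:xiG}, yielding the $\delta_{\xi=\tau}$ main term and the remainder $\mathcal{R}_{d\mathcal{N}_1}(\tau)$ of \eqref{remdG} (the $\mathcal{C}[\Psi_{\xi-s},\mathcal{L}]$ integral). The third piece $\mathcal{R}_{\mathcal{N}}(\tau)$ is, by \eqref{remG}, exactly the two triple integrals of $\mathcal{C}^2[f,\mathcal{L}]$. Collecting the main contributions reproduces the numerical flux $\Phi^\tau_{\mathrm{num},2}$ of \eqref{num2Intro} as displayed in \eqref{u2order}.

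Finally I would set $\mathcal{R}_2(\tau,t_n)=\mathcal{R}_{2,0}(\tau,t_n)+\mathcal{R}_{\mathcal{N}}(\tau)+\mathcal{R}_{d\mathcal{N}_2}(\tau)+\mathcal{R}_{d\mathcal{N}_1}(\tau)$ and confirm that it equals \eqref{R2fin} term by term: lines one and two are $\mathcal{R}_{2,0}$, lines three and four are $\mathcal{R}_{\mathcal{N}}$, lines five and six are $\mathcal{R}_{d\mathcal{N}_2}$, and the last line is $\mathcal{R}_{d\mathcal{N}_1}$. The main obstacle, inasmuch as there is one, is entirely notational: ensuring that the outer $e^{\tau\mathcal{L}}$ stays with the $\mathcal{N}(\tau,0,\xi,\cdot)$-branch but not with the $\delta$-branch, that the $\tfrac{\xi}{\tau}$ weight from \eqref{remG} and \eqref{remdG} is transcribed correctly, and that the shift-operator main terms align with $\varphi_1,\varphi_2$ exactly as in the scheme. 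Since all analytic content sits in the cited lemmas, the verification requires no further estimates.
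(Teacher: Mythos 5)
Your proposal is correct and is essentially identical to the paper's own proof, which consists of exactly the assembly you describe: apply Lemma \ref{lem2b} on $[t_n,t_n+\tau]$, substitute the splitting \eqref{osc22} of $\Osc(\tau,\mathcal{L},u(t_n),\overline{u(t_n)})$ with remainder \eqref{remG}, treat the two non-remainder pieces with Lemma \ref{lem:G} and Lemma \ref{lem:xiG}, and sum the four remainders into \eqref{R2fin}. The only friction in your ``line by line'' match is a pair of sign slips in the paper itself (the first $\mathcal{C}^2[f,\mathcal{L}]$ triple integral carries a $-$ in \eqref{remG} but a $+$ in \eqref{R2fin}, and likewise the second term of \eqref{remF2} should carry a $-$), which is immaterial for the later estimates and is not a defect of your argument.
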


Corollary \ref{cor:2} motivates the second-order scheme  \eqref{num2Intro}
 which locally yields a low-regularity third-order approximation to the exact solution $u(t)$ at time $t = t_{\ell+1} = \tau \cdot (\ell+1)$ . 

\subsection{Local error estimates.}\label{locErr2}
\begin{proposition}[Local error estimates for the second-order scheme]
 \label{proplocal2}
 
 Let 
 \begin{equation}
 \label{defE2} \mathcal{R}_{2}(\tau, t_{n})=  u(t_{n+1}) - \Phi^\tau_{\text{num}, 2}(u(t_{n}))
 \end{equation}
   with $u$ defined in Corollary  \ref{cor:2}. 
    If   Assumption \ref{ass1}, \ref{assnon}, \ref{asscom2}, \ref{asstech2} hold  and assuming  that
 \begin{equation}
 \label{alpha2borne} \sup_{[0, T]} \|u(t) \|_{\alpha_{2}} \leq C_{T}
 \end{equation}
 (where $\alpha_{2}$  is given by Assumption \ref{asscom2}) for some $C_{T}>0,$  then there exists $M_{T}>0$ such that for every $\tau \in (0, 1]$
 \begin{equation}
 \label{O2local}\|\mathcal{R}_{2}(\tau, t_{n}) \|_{\alpha_{0}} \leq M_{T} \tau^3,  \quad  \| \mathcal{R}_{2}(\tau, t_{n}) \|_{a_{0}} \leq M_{T} \tau^{1 + \epsilon},  \quad 0 \leq n \tau \leq T
 \end{equation}
 for some $\epsilon >0$.
 \end{proposition}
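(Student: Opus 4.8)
The plan is to read off the explicit decomposition of $\mathcal{R}_{2}(\tau,t_{n})$ already furnished by Corollary \ref{cor:2}, that is, the seven remainder contributions collected in \eqref{R2fin}, and to bound each of them separately in $X^{\alpha_{0}}$, exactly in the spirit of the first-order estimate of Proposition \ref{proplocal1}. The key structural observation is that every contribution except the first carries three nested time integrations together with an integrand controlled by one of the commutator bounds of Assumption \ref{asscom2}; these will automatically produce the gain $\tau^{3}$ in $X^{\alpha_{0}}$.

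First I would dispatch the five commutator remainders, i.e. lines three to seven of \eqref{R2fin}. The two triple integrals built on $\mathcal{C}^{2}[f,\mathcal{L}]$ are estimated directly by \eqref{C12}, using that $e^{(\tau-s_{1})\mathcal{L}}$ is a contraction and that $e^{s_{1}\mathcal{L}}$ and $e^{\xi\mathcal{A}}$ act boundedly, so that the integrand is bounded by $C_{\alpha_{0}}(\|u(t_{n})\|_{\alpha_{2}})$ uniformly; the three integrations then yield $\tau^{3}$, and \eqref{alpha2borne} converts $\|u(t_{n})\|_{\alpha_{2}}$ into $C_{T}$. The two triple integrals built on $\mathcal{C}^{2}[G,\mathcal{A}]$ are handled identically via \eqref{C22}, and the final term, involving $\mathcal{C}[\Psi_{\xi-s},\mathcal{L}]$ with the harmless prefactor $\xi/\tau\le 1$, is controlled by \eqref{C32}. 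Each of these five pieces is therefore $O(\tau^{3})$ in $X^{\alpha_{0}}$.

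The genuine work lies in the first two lines of \eqref{R2fin}, the difference between the iterated nonlinearity and its explicit quadratic correction. Here I would Taylor expand $f$ to second order about the base point $(e^{\xi\mathcal{L}}u(t_{n}),\overline{e^{\xi\mathcal{L}}u(t_{n})})$, with increment $\delta_{1}=\xi f^{n}+\mathcal{R}_{2,a}(t_{n},\xi)$ and $\delta_{2}=\overline{\delta_{1}}$. The quadratic Taylor remainder is pointwise $O(\xi^{2})$ and, after the $d\xi$ integration, yields $\tau^{3}$ once bounded by the second-derivative estimates $\|D^{2}_{ij}f\cdot(w_{1},w_{2})\|_{\alpha_{0}}$ in \eqref{tech0bisbis3}. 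The linear Taylor term has to be matched against the explicit correction $\xi(D_{1}f^{n}\cdot f^{n}+D_{2}f^{n}\cdot\overline{f^{n}})$: the two discrepancies are, on the one hand, $e^{(\tau-\xi)\mathcal{L}}-e^{\tau\mathcal{L}}=e^{(\tau-\xi)\mathcal{L}}(I-e^{\xi\mathcal{L}})=O(\xi\mathcal{L})$ and, on the other, $D_{i}f(e^{\xi\mathcal{L}}u(t_{n}))-D_{i}f(u(t_{n}))=O(\xi\mathcal{L}u)$, each supplying an extra power of $\xi$ so that the surviving integrals are again $\tau^{3}$; these are precisely the combinations whose $X^{\alpha_{0}}$-boundedness is guaranteed by $\|\mathcal{L}(D_{i}f\cdot g)\|_{\alpha_{0}}$ and the second-derivative bounds of \eqref{tech0bisbis3}. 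Finally the contribution $D_{i}f\cdot\mathcal{R}_{2,a}$ is $O(\xi^{2})$ because, by \eqref{rem2}, $\mathcal{R}_{2,a}(t_{n},\xi)$ is itself a superposition of the admissible building blocks $\mathcal{L}f$, $Df\cdot\mathcal{L}u$ and $Df\cdot f$, for which \eqref{tech0bisbis2} applies. Collecting all contributions gives $\|\mathcal{R}_{2}(\tau,t_{n})\|_{\alpha_{0}}\le M_{T}\tau^{3}$; this bookkeeping, in which the explicit quadratic correction must be shown to cancel the linear Taylor contribution up to terms rendered $O(\tau^{3})$ by the tailored estimates \eqref{tech0bisbis2}–\eqref{tech0bisbis3}, is the main obstacle.

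For the low-regularity bound in $X^{a_{0}}$ I would proceed exactly as in Proposition \ref{proplocal1}. If $\alpha_{0}\ge a_{0}$, the continuous embedding $X^{\alpha_{0}}\subset X^{a_{0}}$ gives the claim with $\epsilon=2$. If $\alpha_{0}<a_{0}$, I first write $\mathcal{R}_{2}=\int_{t_{n}}^{t_{n+1}}e^{(t_{n+1}-s)\mathcal{L}}f(u(s),\overline{u}(s))\,ds-\tau e^{\tau\mathcal{L}}\Psi^{\tau}_{\text{num},2}(u(t_{n}))$ and estimate the two pieces separately in $X^{\alpha_{2}}$, using \eqref{picard} together with \eqref{alpha2borne} for the Duhamel integral and \eqref{tech0bis2} for the numerical flux; this yields $\|\mathcal{R}_{2}(\tau,t_{n})\|_{\alpha_{2}}\le M_{T}\tau$. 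Since $a_{0}\in(\alpha_{0},\alpha_{2})$, interpolation of the graph norms gives $\|\mathcal{R}_{2}\|_{a_{0}}\le\|\mathcal{R}_{2}\|_{\alpha_{2}}^{\theta}\|\mathcal{R}_{2}\|_{\alpha_{0}}^{1-\theta}\le M_{T}\tau^{3-2\theta}$ for the appropriate $\theta\in(0,1)$, and $\epsilon=2-2\theta>0$ finishes the proof.
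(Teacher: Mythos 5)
Your proposal is correct and follows essentially the same route as the paper: the same term-by-term treatment of the decomposition \eqref{R2fin} (the five commutator remainders via Assumption \ref{asscom2} with the semigroup contraction and unitarity of $e^{\xi\mathcal{A}}$, the leading difference handled by Taylor expansion against the explicit quadratic correction using \eqref{tech02}, \eqref{tech0bisbis2} and \eqref{tech0bisbis3}, with $\mathcal{R}_{2,a}=O(\tau^{2})$ in $X^{\alpha_{0}}$ obtained from the fundamental theorem of calculus and the equation), and the same $X^{\alpha_{2}}$-bound of order $\tau$ followed by interpolation for the $X^{a_{0}}$ estimate. The only difference is organizational: the paper first strips $\mathcal{R}_{2,a}$ using the Lipschitz bound \eqref{tech02} and then performs a first-order Taylor expansion followed by the fundamental theorem of calculus in the semigroup parameter, which packages your two discrepancies $e^{(\tau-\xi)\mathcal{L}}-e^{\tau\mathcal{L}}$ and $D_{i}f(e^{\xi\mathcal{L}}u)-D_{i}f(u)$ into a single integral of $\mathcal{L}(D_{i}f\cdot g)$ and $D^{2}_{ij}f$ terms, but the substance is identical.
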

 
 \begin{proof}
 Within the proof $M_{T}$ will again stand for a harmless number that depends only on $C_{T}$ which changes from line to line.
 By using Corollary \ref{cor:2}, we can write
 $$ \mathcal{R}_{2}(\tau, t_{n}) = \sum_{j=1}^6   \mathcal{E}_{j}(\tau, t_{n}),$$
 where in the expansion of the right hand side  given by \eqref{R2fin}, we define
 $  \mathcal{E}_{1}(\tau, t_{n})$ as the first 3 lines
  and then each line corresponds to one term.
   The estimates of $\mathcal{E}_{j}$ for $j \geq 2$ is a direct consequence
    of  Assumption \ref{asscom2} and  Assumption \ref{ass1}, we thus get
    $$ \|\mathcal{E}_{j}\|_{\alpha_{0}} \leq M_{T}, \quad j \geq 2.$$
    It thus only remains to estimate $\mathcal{E}_{1}(\tau, t_{n})$.
    By using \eqref{tech02}, we first observe that
   \begin{multline*}
  \mathcal{E}_{1}(\tau, t_{n})=  
  \int_0^\tau e^{(\tau -  \xi ) \mathcal{L}}  \left[ f \left( e^{ \xi \mathcal{L}} u(t_n) + \xi f^n 
,e^{ \xi  \oD}  \overline  u(t_n) + \xi   \overline{f^n}  \right)  - 
  f \left( e^{ \xi \mathcal{L}} u(t_n), e^{ \xi  \oD}  \overline  u(t_n)\right) \right] d\xi  \\
   -  e^{ \tau \mathcal{L}} 
\int_0^\tau \xi \left(   D_1 f^n \cdot f^n  +    D_2 f^n \cdot  \overline{f^n} \right)d\xi
 + \mathcal{E}_{1, 1}(\tau, t_{n}),
   \end{multline*}
   where 
  \begin{equation}
  \label{E11}\| \mathcal{E}_{1, 1}(\tau, t_{n}) \|_{\alpha_{0}} \leq \tau C_{\alpha_{0}}\left( M_{T} ,\, \sup_{\red (t,\xi) \in  [0, T] \times [0,\tau]} \| \mathcal{R}_{2, a}(t\red,\xi\black)\|_{a_{0}}\right)
\sup_{\red (t,\xi) \in  [0, T] \times [0,\tau]} \| \mathcal{R}_{2, a}(t\red,\xi\black)\|_{a_{0}}.
     \end{equation}
   Then, by using only the first estimate of \eqref{picard} for $\alpha = a_{0}$ and the expression \eqref{Remainder2}, we get that
   $$\sup_{\red (t,\xi) \in  [0, T] \times [0,\tau]} \| \mathcal{R}_{2, a}(t\red,\xi\black)\|_{a_{0}}\leq  \tau C_{\alpha_{0}}(\sup_{[0, T]} \|u\|_{\alpha_{2}}) \leq \tau M_{T}.$$ 
    To estimate $ \| \mathcal{R}_{2, a}(t,\red\xi\black)\|_{\alpha_{0}}$, we observe that thanks to the fundamental Theorem of calculus
     and the equation, we have
     \begin{equation}
\begin{aligned}
 & \mathcal{R}_{2, a}(t,\red\xi\black) \\&= \int_{0}^\xi \int_{0}^s  e^{(\xi- s_{1})\mathcal{L}} \Big(
 - \mathcal{L}\left( f(u , \overline{u})\right) + D_{1}f( u, \overline{u})
  \cdot (\mathcal{L}u + f(u, \overline{u})) +  D_{2}f( u, \overline{u})
  \cdot \left(\overline{\mathcal{L}u} + \overline{f}(u, \overline{u}) \right) \Big) \, ds_{1}\, ds\\
  &\quad\red + \xi^2 \varphi_1({\xi\mathcal{L}}) \mathcal{L} f( u(t), \overline{u(t)})
  \end{aligned}
  \end{equation}   
   where in the above integral $u$ and $\overline{u}$ are always evaluated at the time $t+s_{1}.$
   Therefore, by using the assumption \eqref{tech0bisbis2} and \eqref{alpha2borne}, we get that
   $$\sup_{\red (t,\xi) \in  [0, T] \times [0,\tau]} \| \mathcal{R}_{2, a}(t,\red \xi\black)  \|_{\alpha_{0}} \leq \tau^2 M_{T}.$$
   This finally implies thanks to \eqref{E11}
   that
 \begin{equation}
  \|\label{E112}  \mathcal{E}_{1, 1}(\tau, t_{n}) \|_{\alpha_{0}} \leq M_{T} \tau^3.
 \end{equation}
 To estimate the remaining terms in $\mathcal{E}_{1}$, we can write
 \begin{multline*}
 \mathcal{E}_{1}(\tau, t_{n})=  
  \int_0^\tau \xi e^{(\tau -  \xi ) \mathcal{L}}  \int_{0}^1 \left( D_{1}f( e^{\xi \mathcal{L}}u + s \xi f^n, e^{\xi \overline{\mathcal{L}}}
  \overline{u} + s \xi
  \overline{f^n} ) \cdot f^n \right. \\
  \quad \quad \quad \quad \quad \quad\left.+ D_{2}f( e^{\xi \mathcal{L}}u + s \xi f^n, e^{\xi \overline{\mathcal{L}}}\overline{u} + s \xi
  \overline{f^n} ) \cdot \overline{f^n} \right)\, ds \, d\xi  \\
   -  e^{ \tau \mathcal{L}} 
\int_0^\tau \xi \left(   D_1 f^n \cdot f^n  +    D_2 f^n \cdot  \overline{f^n} \right)d\xi
 + \mathcal{E}_{1, 1}(\tau, t_{n})
 \end{multline*}
 where $u$ and $\overline{u}$ are evaluated at $t_{n}$
 and finally
 \begin{multline*}
 \mathcal{E}_{1}(\tau, t_{n})=  
  \int_0^\tau \int_{0}^1 \xi \int_{0}^\xi e^{(\tau - s_{1}) \mathcal{L}}
  \Big[  -\mathcal{L} \left( D_{1}f( V_{s, s_{1}}, \overline{V_{s, s_{1}}})
   \cdot f^n +  D_{2}f( V_{s, s_{1}},  \overline{V_{s, s_{1}}} ) \cdot \overline{f^n}            \right)  \\
   +  D_{1}^2 f( V_{s, s_{1}},  \overline{V_{s, s_{1}}} )\cdot( e^{s_{1} \mathcal{L}} \mathcal{L} u_{n} + s f^n, f^n)
    +D_{12}^2 f( V_{s, s_{1}},  \overline{V_{s, s_{1}}} )\cdot( e^{s_{1} \overline{\mathcal{L}}} \overline{\mathcal{L}}\red\overline{ u_{n}}\black + s 
    \overline{f^n}, f^n) \\+D_{12}^2 f( V_{s, s_{1}},  \overline{V_{s, s_{1}}} )\cdot( e^{s_{1} \mathcal{L}} \mathcal{L} u_{n} + s f^n, 
    \overline{f^n} ) + D_{2}^2 f( V_{s, s_{1}},  \overline{V_{s, s_{1}}} )\cdot (e^{s_{1} \overline{\mathcal{L}}} \overline{\mathcal{L}} \red\overline{ u_{n}}\black + s    \overline{f^n},  \overline{f^n} 
      \Big]  ds_{1} d{s} d{\xi} \\
   +    \mathcal{E}_{1, 1}(\tau, t_{n})
 \end{multline*} 
 where $V_{s, s_{1}}(t_{n})=e^{s_{1} \mathcal{L}}u(t_{n}) + s s_{1}f^n$.
 Consequently, by using \eqref{tech0bisbis3} and \eqref{tech0bisbis2} and \eqref{E112}, we  get that
 $$ \|  \mathcal{E}_{1}(\tau, t_{n}) \|_{\alpha_{0}}
  \leq M_{T} \tau^3$$
  and hence finally the first part of \eqref{O2local}.  
 
 It remains to estimate  $ \|\mathcal{R}_{2}(\tau, t_{n}) \|_{a_{0}}.$
 We use again  \eqref{defE2} and the definition of the numerical flux in \eqref{num2Intro}, to get
 $$ \mathcal{R}_{2}(\tau, t_{n}) = u(t_{n+1}) - e^{\tau \mathcal{L}}u(t_{n})  - \tau e^{\tau \mathcal{L}} \Psi^\tau_{\text{num}, 2}(u(t_n)).$$
 By using the Duhamel formula and the assumption \eqref{tech0bis2}, we easily get that
 $$ \|\mathcal{R}_{2}(\tau, t_{n}) \|_{a_{0}} \leq M_{T} \tau.$$
We finally get \eqref{O2local} from the same interpolation argument as in the study of the first-order case.

 \end{proof}
 
 \subsection{Proof of Theorem \ref{theo2}}
 We follow the same lines as in the convergence proof of our first-order scheme.
Let us set $e^{n}= u(t_{n}) - u^n$, $u^n$ being the numerical solution computed with \eqref{num2Intro}.
  We get that 
  $$
 e^{n+1}= 
  e^{\tau \mathcal{L}} \Big[ e^n+  \tau \left(\Psi^\tau_{\text{num,2}}(u(t_{n}) )-  \Psi^\tau_{\text{num,2}}(u^n)\right)
  \Big]+ \mathcal{R}_{2}(\tau, t_{n}).
%
$$
 By using \eqref{tech12} and Proposition \ref{proplocal2}, we get that for $0 \leq n \tau \leq T$, 
 $$ \|e^{n+1}\|_{\alpha_{0}} \leq \|e^n\|_{\alpha_{0}} + \tau C_{\alpha_{0}}( \|u(t_{n}) \|_{a_{0}}, \|e^n \|_{a_{0}}) \|e^n\|_{\alpha_{0}}
  + M_{T} \tau^3$$
  In a similar way, by using \eqref{tech22}, and  Proposition \ref{proplocal2}, we obtain that
 $$ \|e^{n+1}\|_{a_{0}} \leq \|e^n\|_{a_{0}} + \tau C_{a_{0}}( \|u(t_{n}) \|_{a_{0}}, \|e^n \|_{a_{0}}) \|e^n\|_{a_{0}}
  + M_{T} \tau^{ 1 + \epsilon}.$$
   We then easily get by induction that for $\tau$ sufficiently small, we have that
   $$ \sup_{0  \leq n \leq N} \|e^n \|_{a_{0}} <+ \infty, \quad  \sup_{0  \leq n \leq N} \|e^n \|_{\alpha_{0}} \leq M_{T} \tau^2.$$
    This ends the proof.

\section{Examples}
\label{sectionexamples}
\subsection{Heat, NLS and Ginzburg-Landau type equations}
To illustrate our general theory, we  shall consider  the  equation
\begin{equation}
\label{GL}\partial_t u  -  D \Delta u = \gamma  u (1 - |u|^2), \quad u_{/t=0}= u_{0}
\end{equation}
where $u \in \mathbb{C}$, and $D, \, \gamma \in \mathcal{C}$ with $\text{Re}\, D \geq 0$
 so that  nonlinear Schrödinger (NLS) type equations and nonlinear heat equations are included in the same model.
  We assume that the problem is set for $x \in \Omega \subset \mathbb{R}^d$, $d \leq 3$,  where $\Omega$ is a smooth open set with compact
  boundary and we add the Dirichlet boundary condition
  \begin{equation}
  \label{dir}
  u(t, \cdot)_{/\partial \Omega}= 0.
  \end{equation}
We thus  set $X= L^2(\Omega)$ and $\mathcal{L} = D \Delta$ with domain $D(\mathcal{L})= H^2 \cap H^1_{0}.$
 By standard characterization, we have that $X^\alpha = H^{2 \alpha} \cap H^1_{0}$ for $1/2 \leq \alpha \leq 1$
 while for $0 \leq \alpha <1/2,$  $X^\alpha = H^{2 \alpha}.$ 
 
 Assumption \ref{ass1} is then matched. 
  The ``nonlinearity" is given by 
  $$ f(u, \overline{u}) = \gamma u  - \gamma u^2 \overline{u}$$
   so that $\mathcal{B}= Id$.
   
   We shall check the assumptions for first-order convergence,  we fix  $a_{0}$ any number strictly bigger than $d/4$ arbitrarily close to it 
    so that $\red H^{2 a_{0}}(\Omega)\subset  L^\infty$ and $a_{1}= 1$ (the propagation of  higher order regularity
    would require additional compatibility conditions for the initial data).

   The estimate \eqref{picard} then follows by standard Moser-Gagliardo-Nirenberg-Sobolev estimates since \red for every $\alpha \geq 0$\black
    $$ \||u|^2 u \|_{H^{2 \alpha}}  \lesssim \|u \|_{H^{2a_{0}}}^2 \|u \|_{H^{2 \alpha}}, \quad 2a_{0} >d/2.$$
    
    Let us consider the scheme \eqref{num1Intro} for this model and measure the error in $L^2$ so that
     $\alpha_{0}= 0$.
     
     Then, we have the explicit expression 
     \begin{equation}\label{cNLS}
     \mathcal{C}[f, \mathcal{L}](v,w)=\gamma D \left(  \Delta ( v^2 w ) - 2  v  \Delta v  w - v^2 \Delta w\right)
      = \gamma D \left( 4 v \nabla v \cdot \nabla w  + 2 \nabla v \cdot \nabla v w \right).\end{equation}
       We thus deduce that
    $$ \|\mathcal{C}[f, \mathcal{L}](v,w)\|_{L^2} \lesssim  ( \|v\|_{L^\infty}+  \|w\|_{L^\infty}) (   \|\nabla v \|_{L^4}
     \|\nabla w \|_{L^4} + \|\nabla v \|_{L^4}^2).$$
      Since $ \red H^{2a_{0}} \subset L^\infty $, and $\red H^{s} \subset  L^4$ with $s= d/4$, we can take
       $$\alpha_{1}=  { 1 \over 2} (1+ {d \over 4})$$
     (note that we can always assume that $d/2 <2a_{0} < 1 + d/4$ when $d \leq 3$.)
     
     Next, we have that 
     $  F(u) \cdot e^{s \mathcal{A}} \mathcal{C}[G, \mathcal{A}](w)= 0$, since $f$ is linear in $\overline{u}$.
      Therefore \eqref{C2} is also matched.
      
      It remains to check Assumption \eqref{asstech} (recall that we take $\alpha_{0}= 0$ here).
      \begin{itemize}
      \item The first part of \eqref{tech0} is again a consequence of $ \red H^{2 a_{0}}(\Omega) \subset  L^\infty $ 
       For the second part, we use
       $$ \| |u|^2 u \|_{L^2} \leq \|u \|_{L^\infty} \| u \|_{L^4}^2$$
       and the estimate follows from the embedding  $\red  H^{d \over 4} \subset  L^4.$ 
       \item In this specific case, we have
         $$\mathcal{B}( (F(u)) \cdot \varphi_{1}(\tau, \mathcal{A}) G (w) )= 
          \gamma u - \gamma u^2  \varphi_{1}(\tau, \mathcal{A}) w$$
         since $ \varphi_{1}(\tau, \mathcal{A})$ is bounded on Sobolev spaces, the estimates \eqref{tech0bis}, \eqref{tech1}
         and \eqref{tech2} then  also follow from the above arguments.
      \end{itemize}
      
      We then obtain from Theorem \ref{theo1},  that:
      
      \begin{cor}\label{cori}
      For $1 \leq d \leq 3$ and 
       for every $u_{0}  \in H^{ 1 + {d \over 4}}(\Omega) \cap H^1_{0}(\Omega)$, 
       let $T>0$ and $u \in \mathcal{C}([0, T], H^{ 1 + {d \over 4}}(\Omega) \cap H^1_{0}(\Omega) )$
  the unique solution of \eqref{GL} given by Theorem \ref{localwp}, let  $u^n$ denote the numerical solution given by \eqref{num1Intro}.
   Then there exists $C_{T}>0$ such that
 $$ \|u(n \tau) - u^n \|_{L^2} \leq C_{T} \tau, \quad 0 \leq n \tau \leq T.$$
          \end{cor}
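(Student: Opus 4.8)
The plan is to obtain Corollary~\ref{cori} as a direct instance of the abstract convergence Theorem~\ref{theo1}, so that the whole task reduces to fitting \eqref{GL} into the general framework and checking Assumptions~\ref{ass1}, \ref{assnon}, \ref{asscom} and \ref{asstech} for a suitable choice of exponents $a_0,a_1,\alpha_0,\alpha_1$. First I would fix the functional setting $X=L^2(\Omega)$ and $\mathcal{L}=D\Delta$ with Dirichlet domain $\mathcal{D}(\mathcal{L})=H^2\cap H^1_0$: the condition $\mathrm{Re}\,D\ge 0$ makes $\{e^{t\mathcal{L}}\}$ a contraction semigroup, while $\mathcal{A}=-\mathcal{L}+\overline{\mathcal{L}}=-2i\,\mathrm{Im}(D)\Delta$ generates a unitary group and commutes with $\mathcal{L}$, so Assumption~\ref{ass1} holds. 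The standard spectral characterisation gives $X^\alpha=H^{2\alpha}\cap H^1_0$ for $\alpha\in[1/2,1]$ and $X^\alpha=H^{2\alpha}$ for $\alpha\in[0,1/2)$, so the data space $H^{1+d/4}\cap H^1_0$ is exactly $X^{\alpha_1}$ with $\alpha_1=\tfrac12(1+d/4)$. The nonlinearity $f(u,\overline u)=\gamma u-\gamma u^2\overline u$ is cast in the tensorised form~\eqref{nonlin} with $\mathcal{B}=\mathrm{Id}$, for instance $F(v)=(\gamma v,-\gamma v^2)$ and $G(w)=(1,w)$.

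Next I would fix the remaining parameters. Since the error is measured in $L^2$ I set $\alpha_0=0$, take $a_0$ to be any number strictly larger than $d/4$ (so that $H^{2a_0}\subset L^\infty$ by Sobolev embedding, using $d\le 3$) and $a_1=1$, the maximal regularity propagated without imposing compatibility conditions at $\partial\Omega$. With this choice Assumption~\ref{assnon} reduces to the Moser--Gagliardo--Nirenberg product estimate $\||u|^2u\|_{H^{2\alpha}}\lesssim\|u\|_{H^{2a_0}}^2\|u\|_{H^{2\alpha}}$, valid for all $\alpha\ge 0$ whenever $2a_0>d/2$; this supplies both the boundedness and the local Lipschitz bound in~\eqref{picard}.

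The heart of the argument is Assumption~\ref{asscom}, which is where the regularity gain over classical schemes is produced. I would compute the commutator explicitly; the linear part of $f$ commutes with $\mathcal{L}$ and the decisive point is that in the cubic part the Leibniz rule makes the two top-order (second-derivative) contributions cancel, leaving only first-order derivatives, namely $\mathcal{C}[f,\mathcal{L}](v,w)=\gamma D(4v\,\nabla v\cdot\nabla w+2w\,\nabla v\cdot\nabla v)$. Estimating this in $L^2$ then costs only one derivative on each rough factor: bounding $\|\cdot\|_{L^\infty}$ by $H^{2a_0}$ and $\|\nabla(\cdot)\|_{L^4}$ via $H^{d/4}\subset L^4$, one reaches $\|\mathcal{C}[f,\mathcal{L}](v,w)\|_{L^2}\lesssim C(\|v\|_{\alpha_1},\|w\|_{\alpha_1})$ with precisely $\alpha_1=\tfrac12(1+d/4)$, which is strictly below $1$ for $d\le 3$. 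This cancellation, together with the Sobolev bookkeeping ensuring $\alpha_1<1$ (and leaving room to pick $d/4<a_0<\alpha_1$), is the whole improvement and is the step I expect to be the main obstacle. The second estimate~\eqref{C2} comes for free here, since $f$ is affine in its second argument and hence $\mathcal{C}[G,\mathcal{A}]\equiv 0$.

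Finally I would dispatch the technical flux bounds of Assumption~\ref{asstech}. As $\mathcal{B}=\mathrm{Id}$ and $\varphi_1(\tau\mathcal{A})$ is a Fourier multiplier uniformly bounded on every $H^s$ (because $\varphi_1$ is bounded on $i\mathbb{R}$ and $\mathcal{A}$ is a function of $\Delta$), the numerical flux $\Psi^\tau_{\mathrm{num},1}(u)=\gamma u-\gamma u^2\,\varphi_1(\tau\mathcal{A})\overline u$ satisfies the same Moser-type product estimates already used for $f$; hence \eqref{tech0}, \eqref{tech0bis}, \eqref{tech1} and \eqref{tech2} follow verbatim from the bounds established above. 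Collecting these verifications, Theorem~\ref{theo1} applies with $\alpha_0=0$ and $\alpha_1=\tfrac12(1+d/4)$ and yields the stated first-order error bound in $L^2$ for data in $X^{\alpha_1}=H^{1+d/4}\cap H^1_0$, which is exactly the assertion of Corollary~\ref{cori}.
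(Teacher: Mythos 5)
Your proposal is correct and follows essentially the same route as the paper: it instantiates Theorem~\ref{theo1} with $X=L^2(\Omega)$, $\alpha_0=0$, $a_0>d/4$, $a_1=1$, exploits the same cancellation of second-order derivatives in the explicit commutator $\mathcal{C}[f,\mathcal{L}](v,w)=\gamma D\left(4v\,\nabla v\cdot\nabla w+2w\,\nabla v\cdot\nabla v\right)$ to reach $\alpha_1=\tfrac12\left(1+\tfrac{d}{4}\right)<1$, observes $\mathcal{C}[G,\mathcal{A}]=0$ for \eqref{C2}, and checks Assumption~\ref{asstech} via the same Sobolev embeddings and the boundedness of $\varphi_1(\tau\mathcal{A})$. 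The only cosmetic slip is calling $\varphi_1(\tau\mathcal{A})$ a ``Fourier multiplier'' on a general domain $\Omega$; on the scale $X^\alpha$ its boundedness comes from the spectral functional calculus for the Dirichlet Laplacian, which is what the paper (implicitly) uses.
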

          Note that we require only $1+ d/4$ derivatives to get first-order convergence in contrast to classical schemes which require (at least) $2$ derivatives.
           The same result holds for periodic boundary conditions.
           Corollary \ref{cori} does not use any smoothing properties of the PDE, it could be improved by using parabolic smoothing
            if $\mbox{Re}\,  {\red D} >0$. It could also be improved when $\mbox{Re}\, {\red D} = 0$  if $\Omega$ is nice (non-trapping exterior domain), or if $\Omega= \mathbb{T}^d$ by using tools from dispersive PDE in the convergence analysis of the scheme (e.g. discrete Bourgain-space type estimates) see \cite{ORS19}, \cite{ORS20}.
            
            Also note that for periodic boundary conditions  ($x \in \mathbb{T}^d$) or the full space ($x \in \mathbb{R}^d$)
             if we measure the error  in high order Sobolev spaces $H^{r}$ with $r=2 \alpha_{0}>d/2$ we obtain the global error estimate $ \|u(n \tau) - u^n \|_{r} \leq C_{T} \tau$ for solutions in $H^{r+1}$.

            \begin{rem}
            \red In the special case of the periodic Schr\"odinger equation with polynomial nonlinearities $u^p \overline u^m$ the second-order Duhamel integrator \eqref{num2Intro} improves previous  methods (\cite{KOS19}). For periodic boundary conditions it is similar to the second-order scheme developed in  \cite{BS20} based on the combination of a resonance based discretisation (which is so far restricted to periodic boundary conditions) coupled with suitable filter functions. For the favorable error behaviour  at low regularity, see in particular  \cite[Fig. 3]{BS20}.
            \end{rem}
            \begin{rem}\label{rem:splitNLS}
           In case of the  cubic Schr\"odinger equation  the filtered Lie splitting \eqref{numFLieIntroe} takes the form
\begin{equation}\label{numFLieIntro}
u^{\ell+1}=  e^{\tau \mathcal{L}} \e^{  \tau u^\ell \left(\Psi(\tau)\overline {u^\ell}\right) } u^\ell\quad \text{with} \quad  \Psi(\tau) = \varphi_1\big(- 2i \tau \Delta \big).
\end{equation}
In contrast to its unfiltered counterpart ($\Psi(\tau) \equiv 1$), the filtered Lie splitting \eqref{numFLieIntro} only \red involves  one additional derivative of the solution in the local error (i.e, $\nabla u$) instead of two (i.e., $\Delta u$). This can be seen by reformulating the filtered Lie splitting as follows
\begin{align*}
u^{\ell+1}=  e^{\tau \mathcal{L}} \left[\left (1+ \tau u^\ell  \left(\Psi(\tau)\overline {u^\ell}\right)  \right) u^\ell\right]
+\mathcal{R}(\tau,u^\ell)
\end{align*}
where the first term corresponds to the low regularity scheme \eqref{num1Intro}, which only involves first order derivatives in the local error, see \eqref{cNLS}, and the remainder $$\mathcal{R}(\tau,u^\ell) =e^{\tau \mathcal{L}} \left[\left ( \e^{  \tau u^\ell \left(\Psi(\tau)\overline {u^\ell}\right) }
- 1 - \tau u^\ell  \left(\Psi(\tau)\overline {u^\ell}\right)  \right) u^\ell\right]$$ is of order $\mathcal{O}(\tau^2 q(u^\ell))$ for some polynomial $q$.
            \end{rem}

            \subsection{Half-waves}\label{secHW}
            Let us consider the half-wave equation
   \begin{equation} i \partial_t u + \vert \nabla \vert u =  \pm |u|^2 u, \, \mathcal{L}= i \vert \nabla \vert,  \quad u_{/t=0}= u_{0}.
   \label{half}
   \end{equation}
  \red In this situation the  low regularity scheme at first-order takes the form (cf.  \eqref{num1Intro})
   \begin{equation*}
u^{\ell+1}=    e^{i \tau  \vert \nabla \vert} \Big( u^\ell \mp i   \tau (u^\ell)^2 \varphi_1\big(-2i \tau  \vert \nabla \vert \big)  \overline {u^\ell} \Big) \end{equation*}
and at second-order it reads (cf. \eqref{num2Intro})
\begin{equation*}\begin{aligned}
u^{\ell+1}& = e^{i \tau  \vert \nabla \vert} u^\ell \mp i   \tau e^{i \tau \vert \nabla \vert}  \left( (u^\ell)^2  \varphi_1\big(-2i \tau\vert \nabla \vert \big)   \overline u^\ell\right)  
\mp i   \tau 
  \left(e^{i \tau  \vert \nabla \vert} u^\ell\right)^2 \varphi_2\big(-2i \tau  \vert \nabla \vert \big) e^{i \tau  \vert \nabla \vert} \overline u^\ell
\\&\pm i   \tau 
e^{i \tau   \vert \nabla \vert} \left(  \left(  u^\ell\right)^2 \varphi_2\big(-2i \tau  \vert \nabla \vert \big) \overline u^\ell\right) 
 - \frac{\tau^2}{2}e^{i \tau  \vert \nabla \vert}  \left(     \vert u^\ell \vert^4  u^\ell  \right).
  \end{aligned}
\end{equation*}
  In the following we analyze the order of convergence of the above schemes. \black
  \subsubsection{First-order scheme}
  We shall first analyze our first-order scheme in dimension $1$  with periodic boundary conditions thus  for $x \in \mathbb{T}$, $d=1$.
   Here, we have
   $$ f(u, \overline{u})= \red \mp i \black  u^2 \overline{u}$$
   so that again, $\mathcal{B}= Id$.
             
        We   set $X= L^2(\mathbb{T}),$  $\mathcal{L} =  i |\nabla |$ with domain $D(\mathcal{L})= H^1(\mathbb{T}).$
 We thus  have that $X^\alpha = H^{ \alpha}(\mathbb{T})$.
 Assumption \ref{ass1} is then matched.    
    We fix  now  $a_{0}$ any number strictly bigger than $1/2$ arbitrarily close to it, 
    and take $a_{1}= \alpha_{1}= 3/4$ and we will again measure the error in $L^2$ so that $\alpha_{0}=0$.
    Assumptions \ref{assnon}, \ref{asstech}  are matched thanks to the same arguments as above.
     Then, we have that
     $$ \mathcal{C}[f, \mathcal{L}](v,w)=\pm i  \left(  |\nabla| ( v^2 w ) - | \nabla | (v^2)   w - v^2 |\nabla | w  + (|\nabla |( v^2) - 2 v |\nabla v| )w\right).$$
      To estimate this term, we shall use the following estimate
      $$ \| |\nabla | (f g) - |\nabla| f g - f |\nabla |g  \|_{L^2} \lesssim \| f \|_{H^{ {1 \over 2 } + { d \over 4 }}} \| g \|_{H^{ {1 \over 2 } + { d \over 4 }}} $$
     which can be obtained as a consequence of some recent version of the generalized Leibnitz rule
     \red \cite{Dong-Li}\black. We shall provide a direct  proof of the above estimate which is valid for the torus in  Section \ref{sectiontech}. The latter in particular implies that 
   \begin{multline*} \| \mathcal{C}[f, \mathcal{L}](v,w) \|_{L^2}
     \lesssim \|v^2 \|_{H^{ {1 \over 2}+ {d \over 4} }} \|  w \|_{H^{{1 \over 2}+ {d \over 4}}}
      + \|w\|_{L^\infty} \|\nabla |( v^2) - 2 v |\nabla v|\|_{L^2}\\
       \lesssim \|v\|_{L^\infty} \|v \|_{H^{ {1\over 2} + {d \over 4}}}
   \|w \|_{H^{ {1\over 2} + {d \over 4}}} + \|w \|_{L^\infty} \|v \|_{H^{ {1\over 2} + {d \over 4}}}^2.
   \end{multline*}
    Therefore, with   $\alpha_{1}= 3/4$, we get
    $$ | \mathcal{C}[f, \mathcal{L}](v,w) \|_{L^2}
     \lesssim  \|w\|_{H^{\alpha_{1}}} \|v\|_{H^{\alpha_{1}}}^2.$$

     Next, we have that 
     $  F(u) \cdot e^{s \mathcal{A}} \mathcal{C}[G, \mathcal{A}](w)= 0$, since $f$ is linear in $\overline{u}$
      therefore \eqref{C2} is also matched.
      We can check \eqref{picard} and Assumption \ref{asstech} as in the previous example.
      
          We then obtain from Theorem \ref{theo1},  that
      
      \begin{cor}
      For 
       for every $u_{0}  \in H^{3 \over 4}({\mathbb{T}})$. 
       Let $T>0$ and $u \in \mathcal{C}([0, T], H^{3 \over 4}(\mathbb{T})$
  the unique solution of \eqref{GL} given by Theorem \ref{localwp}, let  $u^n$ denote the numerical solution given by \eqref{num1Intro}.
   Then there exists $C_{T}>0$ such that
 $$ \|u(n \tau) - u^n \|_{L^2} \leq C_{T} \tau, \quad 0 \leq n \tau \leq T.$$
          \end{cor}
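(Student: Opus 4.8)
The plan is to obtain the corollary as a direct instance of Theorem \ref{theo1}, so the entire task reduces to checking that Assumptions \ref{ass1}, \ref{assnon}, \ref{asscom} and \ref{asstech} all hold for the concrete half-wave setting with the parameter choices announced in the text: $X = L^2(\mathbb{T})$, $\mathcal{L} = i|\nabla|$ (so that $X^\alpha = H^\alpha(\mathbb{T})$), $f(u,\overline u) = \mp i\, u^2\overline u$ with $\mathcal{B} = \mathrm{Id}$, together with $a_0$ fixed arbitrarily close to and strictly above $1/2$, $a_1 = \alpha_1 = 3/4$, and $\alpha_0 = 0$. Once these are verified, Theorem \ref{theo1} yields exactly the stated bound $\|u(n\tau) - u^n\|_{L^2} \leq C_T\tau$ for data in $H^{3/4}(\mathbb{T})$.

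First I would dispatch the structural hypotheses. Since $|\nabla|$ is a real self-adjoint Fourier multiplier on $\mathbb{T}$, the operator $\mathcal{L} = i|\nabla|$ is skew-adjoint and generates a unitary group; one computes $\overline{\mathcal{L}} = -i|\nabla|$, hence $\mathcal{A} = -\mathcal{L}+\overline{\mathcal{L}} = -2i|\nabla| = i\mathcal{M}$ with $\mathcal{M} = -2|\nabla|$ self-adjoint, and the commutation $[\mathcal{L},\overline{\mathcal{L}}] = 0$ is immediate. This settles Assumption \ref{ass1}. For Assumption \ref{assnon} I would invoke the standard Moser--Gagliardo--Nirenberg product estimate $\|u^2\overline u\|_{H^\alpha} \lesssim \|u\|_{H^{a_0}}^2 \|u\|_{H^\alpha}$, valid for all $\alpha \in [a_0, 3/4]$ because $a_0 > 1/2 = d/2$ ensures $H^{a_0}(\mathbb{T}) \subset L^\infty$; the Lipschitz estimate in \eqref{picard} follows from the same argument applied to the difference.

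The heart of the proof is Assumption \ref{asscom}. The second estimate \eqref{C2} is in fact vacuous here: writing $F(v) = \mp i v^2$ and $G(w) = w$, the factor $G$ is linear, so by \eqref{DlowIntro} one has $\mathcal{C}[G,\mathcal{A}](w) = -\mathcal{A}w + \mathcal{A}w = 0$ and \eqref{C2} holds trivially. The substance is the commutator bound \eqref{C1}. Using the explicit formula for $\mathcal{C}[f,\mathcal{L}]$ recorded in the text, I would observe that every term carrying a full $|\nabla|$ applied to a product is a \emph{fractional Leibniz commutator}: both $|\nabla|(v^2w) - |\nabla|(v^2)\,w - v^2|\nabla|w$ and $|\nabla|(v^2) - 2v|\nabla|v$ have this structure. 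The decisive ingredient is therefore the estimate
\[
\||\nabla|(fg) - |\nabla|f\,g - f\,|\nabla|g\|_{L^2} \lesssim \|f\|_{H^{1/2+d/4}}\|g\|_{H^{1/2+d/4}},
\]
which for $d=1$ reads with exponent $3/4$. Combining this with the product estimate $\|v^2\|_{H^{3/4}} \lesssim \|v\|_{L^\infty}\|v\|_{H^{3/4}}$ and the embedding $H^{3/4}(\mathbb{T}) \subset L^\infty$ yields $\|\mathcal{C}[f,\mathcal{L}](v,w)\|_{L^2} \lesssim \|w\|_{H^{3/4}}\|v\|_{H^{3/4}}^2$, which is \eqref{C1} with $\alpha_1 = 3/4$. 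I expect this fractional Leibniz commutator estimate to be the main obstacle, and I would prove it directly on the torus via a Littlewood--Paley dyadic decomposition (splitting into high--low, low--high and high--high frequency interactions), as announced for Section \ref{sectiontech}, rather than rely on the general result of \cite{Dong-Li}.

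Finally, for Assumption \ref{asstech} (needed since $\alpha_0 = 0 < a_0$), I would note that $\varphi_1(\tau\mathcal{A})$ is a bounded Fourier multiplier uniformly in $\tau$, as $\varphi_1$ is bounded on $i\mathbb{R}$ and $\mathcal{A}$ is skew-adjoint, so $\Psi^\tau_{\text{num},1}(u) = \mp i\, u^2\,\varphi_1(\tau\mathcal{A})\overline u$; the estimates \eqref{tech0}, \eqref{tech0bis}, \eqref{tech1} and \eqref{tech2} then all reduce to the same Sobolev product and $L^\infty$ bounds used above. With all assumptions in place, applying Theorem \ref{theo1} with $\alpha_1 = 3/4$ and $\alpha_0 = 0$ concludes the proof.
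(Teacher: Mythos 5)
Your proposal is correct and takes essentially the same route as the paper: identical parameter choices ($a_0>1/2$ arbitrarily close to it, $a_1=\alpha_1=3/4$, $\alpha_0=0$), the same trivialization of \eqref{C2} via the linearity of $G$, and the same key fractional Leibniz commutator bound $\||\nabla|(fg)-|\nabla|f\,g-f|\nabla|g\|_{L^2}\lesssim \|f\|_{H^{3/4}}\|g\|_{H^{3/4}}$ proved directly on the torus by Littlewood--Paley decomposition (the paper's Lemma \ref{lemtech} in Section \ref{sectiontech}), before concluding with Theorem \ref{theo1}. No gaps to report.
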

Note that classical schemes would require initial data (at least) in $H^1$.

In higher dimensions,  $2$, $3$ for example, at first-order our convergence result  does not improve on the classical regularity assumption.
 The main reason is that the order of the operator $\mathcal{L}$, which is $1$  in this case,  is smaller than $d/2$ which is  the minimal
  $a_{0}$ that we can take to get a local Cauchy Theory without using more sophisticated harmonic analysis tools.
   In this case,  the estimate for the commutator  $\mathcal{C}[f, \mathcal{L}]$ is not better than  the estimate of each
    of the terms alone.
 We could improve the first-order error estimate  by adapting the harmonic analysis  tools developed in \cite{ORS20}. Nevertheless, even without any sophisticated tools our second-order convergence result yields a significant improvement as we shall see below.
   
   \subsubsection{Second-order scheme}
   To illustrate our second-order theory, we shall now consider \eqref{half} for $d = 2$ still with periodic boundary conditions
   i.e $x \in \mathbb{T}^2.$ We still have $X^\alpha = H^\alpha(\mathbb{T}^d)$. We choose
   again to measure the error in $L^2$ so $\alpha_{0}= 0$ and we fix $a_{0}>d/2$ arbitrarily close to it.
    We first check the estimate \eqref{C12} to fix $\alpha_{2}$. Note that again \eqref{C22} and \eqref{C32} are trivially satisfied, 
    $\mathcal{C}[G, \mathcal{A}]= 0$ since $G$ is linear.
    Let us set $T(v_{1}, v_{2}, v_{3})= v_{1}v_{2}v_{3}.$
     We shall estimate $ \|\mathcal{C}^2[T, \mathcal{L}](v_{1}, v_{2}, v_{3})\|_{L^2}$. The estimate of
     $ \|\mathcal{C}^2[f, \mathcal{L}](v, w)\|_{L^2}$ then follows by observing that
      $ f(v, w)= T(v,v, w).$
       By expanding $\mathcal{C}^2[T, \mathcal{L}](v_{1}, v_{2}, v_{3})$, we get that
  \begin{equation}
  \label{comdemi}\mathcal{C}^2[T, \mathcal{L}](v_{1}, v_{2}, v_{3})= |\nabla |^2(v_{1} v_{2} v_{3}) -
   |\nabla|^2 v_{1} v_{2} v_{3} - v_{1}| \nabla |^2 v_{2} v_{3 }- v_{1} v_{2} | \nabla |^2 v_{3}+  \mathcal{R}_{1}+ \mathcal{R}_{2}
   \end{equation}
   where 
   $$\mathcal{R}_{1} = \sum_{i,j,k} v_{i}  |\nabla |v_{j}\, |\nabla | v_{k}, \quad \mathcal{R}_{2}= \sum_{i,j,k}\left( |\nabla | ( v_{i} v_{j}  |\nabla| v_{k}) - v_{i}v_{j}|\nabla|^2v_{k}\right).$$  
    The sums are finite sums, we do not need to explicit them.
   We can easily estimate $\mathcal{R}_{1}$ by using the H\"older inequality and the Sobolev embedding $\red   H^{1 \over 2}\subset  L^4 $
    in dimension $2$. We get
    $$ \|\mathcal{R}_{1}\|_{L^2} \lesssim   \sum_{i,j,k} \|v_{i}\|_{L^\infty} \| |\nabla| v_{j}\|_{L^4} \||\nabla |v_{k}\|_{L^4}
     \lesssim \sum_{i,j,k} \|v_{i}\|_{H^{a_{0}}} \|v_{j }\|_{H^{3 \over 2}} \|v_{k}\|_{H^{3 \over 2}}.$$
      For $\mathcal{R}_{2}$ we use the Kato-Ponce commutator estimate (see for example  \cite{Dong-Li}  Theorem 1.9) to get
      $$ \| \mathcal{R}_{2} \|_{L^2} \lesssim \sum_{i,j,k} \| \nabla ( v_{i} v_{j}) \|_{L^4} \| |\nabla| v_{k}\|_{L^4}
       \lesssim   \sum_{i,j,k} \|v_{i}\|_{L^\infty}   \|v_{j }\|_{H^{3 \over 2}} \|v_{k}\|_{H^{3 \over 2}}.$$ 
      It remains to estimate the leading term in \eqref{comdemi}. Since $|\nabla|^2=- \Delta $ we can expand
       with the classical Leibnitz formula. We obtain
     $$ \mathcal{C}^2[T, \mathcal{L}](v_{1}, v_{2}, v_{3})= - \sum_{i,j,k} v_{i} \nabla  v_{j } \cdot \nabla  v_{k} 
      + \mathcal{R}_{1} + \mathcal{R}_{2}.$$
      Therefore, from the same estimates as previously, we obtain that
      $$ \| \mathcal{C}^2[T, \mathcal{L}](v_{1}, v_{2}, v_{3}) \| \lesssim  \|v_{1 }\|_{H^{3 \over 2}}   \|v_{2 }\|_{H^{3 \over 2}} \|v_{3}\|_{H^{3 \over 2}}.$$
       We thus choose $\alpha_{2}= {3 \over2 }.$
        The other assumptions are then again a consequence of the tame estimate, for $s \geq 0$:
        $$ \| u v w \|_{H^s} \lesssim \|u \|_{H^{a_{0}}} \|v\|_{H^{a_{0}}} \|w\|_{H^s} + \|v \|_{H^{a_{0}}} \|w\|_{H^{a_{0}}} \|u\|_{H^s}
         +  \|w \|_{H^{a_{0}}} \|u\|_{H^{a_{0}}} \|v\|_{H^s}.$$
      
      We thus obtain:
    \begin{cor}
      For 
        every $u_{0}  \in H^{3 \over 2}({\mathbb{T}^2})$. 
       Let $T>0$ and $u \in \mathcal{C}([0, T], H^{3 \over 2}(\mathbb{T}^2))$
  the unique solution of \eqref{half} given by Theorem \ref{localwp}, let  $u^n$ denote the numerical solution given by \eqref{num2Intro}.
   Then there exists $C_{T}>0$ such that
 $$ \|u(n \tau) - u^n \|_{L^2} \leq C_{T} \tau^2, \quad 0 \leq n \tau \leq T.$$
          \end{cor}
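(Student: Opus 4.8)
The plan is to obtain the corollary as a direct application of the abstract second-order result, Theorem \ref{theo2}, so the real work lies in verifying its hypotheses with the choices $\alpha_{0}=0$ (errors measured in $L^2$), $\alpha_{2}=3/2$, and $a_{0}$ any fixed number in $(1,3/2)$ (so that $H^{a_{0}}(\mathbb{T}^2)\subset L^\infty$). First I would fix the functional setting $X=L^2(\mathbb{T}^2)$, $\mathcal{L}=i|\nabla|$ with $\mathcal{D}(\mathcal{L})=H^1$, so that $X^\alpha=H^\alpha(\mathbb{T}^2)$ and $\mathcal{A}=-\mathcal{L}+\oD=-2i|\nabla|$. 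Since $|\nabla|$ is self-adjoint, $e^{t\mathcal{L}}$ is a unitary (hence contraction) group, $\mathcal{A}=i\mathcal{M}$ with $\mathcal{M}=-2|\nabla|$ self-adjoint, and $[\mathcal{L},\oD]=0$; thus Assumption \ref{ass1} holds. The nonlinearity $f(u,\overline u)=\pm u^2\overline u$ is tensorized with $\mathcal{B}=\pm\,\mathrm{Id}$, $F(v)=v^2$, $G(w)=w$, and the cubic estimate $\|u^2\overline u\|_{H^\alpha}\lesssim\|u\|_{H^{a_{0}}}^2\|u\|_{H^\alpha}$ (valid since $2a_{0}>d/2=1$) gives Assumption \ref{assnon}.

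The crux is Assumption \ref{asscom2}. A decisive simplification is that $G$ is \emph{linear}, so $\mathcal{C}[G,\mathcal{A}]=0$, hence also $\mathcal{C}^2[G,\mathcal{A}]=0$ and $\Psi_{s}\equiv 0$; this makes \eqref{C22} and \eqref{C32} trivially true and leaves only the single commutator estimate \eqref{C12} for $\mathcal{C}^2[f,\mathcal{L}]$ in $L^2$. This estimate is the \textbf{main obstacle}, and the only place where the improved regularity threshold is gained. I would handle it by first studying the trilinear map $T(v_{1},v_{2},v_{3})=v_{1}v_{2}v_{3}$, expanding $\mathcal{C}^2[T,\mathcal{L}]$ into a leading block $|\nabla|^2(v_{1}v_{2}v_{3})-|\nabla|^2 v_{1}\,v_{2}v_{3}-v_{1}|\nabla|^2 v_{2}\,v_{3}-v_{1}v_{2}|\nabla|^2 v_{3}$ plus the two lower-order collections $\mathcal{R}_{1}=\sum_{i,j,k} v_{i}\,|\nabla|v_{j}\,|\nabla|v_{k}$ and $\mathcal{R}_{2}=\sum_{i,j,k}\bigl(|\nabla|(v_{i}v_{j}|\nabla|v_{k})-v_{i}v_{j}|\nabla|^2 v_{k}\bigr)$, and then recovering $f$ via $f(v,w)=\pm T(v,v,w)$.

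The heart of the matter is the cancellation in the leading block: because $|\nabla|^2=-\Delta$ is a genuine differential operator, the classical Leibniz rule makes all the \emph{second}-order derivatives cancel, so that the leading block reduces to $-\sum_{i,j,k} v_{i}\,\nabla v_{j}\cdot\nabla v_{k}$, carrying only \emph{first}-order derivatives on two of the three factors. Each such term, as well as $\mathcal{R}_{1}$, is then estimated in $L^2$ by H\"older together with the two-dimensional embedding $H^{1/2}\subset L^4$, which turns $|\nabla|v_{j},|\nabla|v_{k}\in L^4$ into a demand of only $H^{3/2}$ on $v_{j},v_{k}$ (and $H^{a_{0}}\subset L^\infty$ on the undifferentiated factor); for $\mathcal{R}_{2}$ I would invoke a Kato--Ponce fractional Leibniz commutator estimate to the same effect. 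This produces \eqref{C12} with $\alpha_{2}=3/2$, strictly below the threshold $\alpha_{2}=2$ that a rough bound ignoring the commutator structure would force.

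It then remains to check the technical Assumption \ref{asstech2}: the estimates on $f$, on its first and second differentials, and on the numerical flux $\Psi^\tau_{\text{num},2}$, in $H^{3/2}$ and in $L^2$. These are routine consequences of the tame trilinear estimate $\|uvw\|_{H^s}\lesssim\|u\|_{H^{a_{0}}}\|v\|_{H^{a_{0}}}\|w\|_{H^s}+(\text{symmetric terms})$ for $s\ge 0$, combined with the uniform boundedness of $\varphi_{1}(\tau\mathcal{A})$ and $\varphi_{2}(\tau\mathcal{A})$ on every $H^s$ (as $\varphi_{1},\varphi_{2}$ are bounded on $i\mathbb{R}$). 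Finally, Theorem \ref{localwp} provides the solution $u\in\mathcal{C}([0,T],H^{3/2})$ together with the bound $\sup_{[0,T]}\|u(t)\|_{H^{3/2}}\le C_{T}$ required in \eqref{alpha2borne}, so Theorem \ref{theo2} applies and yields $\|u(n\tau)-u^n\|_{L^2}\le C_{T}\tau^2$ for $0\le n\tau\le T$, as claimed.
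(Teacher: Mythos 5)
Your proposal is correct and takes essentially the same route as the paper: the identical choices $\alpha_{0}=0$, $a_{0}>1$, $\alpha_{2}=3/2$, the reduction of Assumption \ref{asscom2} to the single estimate \eqref{C12} via the linearity of $G$ (so $\mathcal{C}[G,\mathcal{A}]=0$ and \eqref{C22}, \eqref{C32} are trivial), the same trilinear expansion of $\mathcal{C}^2[T,\mathcal{L}]$ with the Leibniz cancellation $|\nabla|^2=-\Delta$ reducing the leading block to $-\sum_{i,j,k} v_{i}\,\nabla v_{j}\cdot\nabla v_{k}$, and the same treatment of $\mathcal{R}_{1}$ by H\"older with $H^{1/2}\subset L^4$ and of $\mathcal{R}_{2}$ by Kato--Ponce. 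The verification of Assumption \ref{asstech2} via the tame trilinear estimate and the concluding application of Theorem \ref{theo2} likewise match the paper's argument.
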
 
          Note that   second-order convergence of  classical  schemes for the Half-wave  equation~\eqref{half}   requires (at least) 
  $H^2$ solutions.

   \subsection{Wave and Klein-Gordon type equations}

Our  general  framework can also be applied to Klein--Gordon and wave equations: Let us for instance consider the nonlinear Klein--Gordon equation
\begin{equation}\label{kgrO}
\partial_{tt} z - \Delta z  + m^2 z =  g(z), \quad z(0) = u_{0}, \quad \partial_{t} z(0) =  u_{1}, 
\end{equation}
where for simplicity we assume non zero mass $m \neq 0$ and  real-valued solutions  $z(t,x)\in \mathbb{R}$. However, we may  also deal with the complex setting $z(t,x)\in \mathbb{C}$ and wave equations $m= 0$. 

In a first step let us rewrite the Klein--Gordon equation \eqref{kgrO} as a first-order complex system. We consider 
that $x \in \mathbb{T}^d$ for $d \leq 3$ so that 
\[
 \cnab = \sqrt{-\Delta +m^2}
 \]
 can be defined with Fourier series. We get 
\begin{equation}\label{kgroo}
i \partial_t u = - \cnab u +  \cnab^{-1}g\big((\textstyle \frac12 (u+ \overline u)\big).
\end{equation}
The latter is obtained via the classical transformation
 \begin{equation}
 \label{uz}
u= z - i \cnab^{-1} \partial_t z,
\end{equation}
where
\[
 \cnab = \sqrt{-\Delta +m^2}
 \] is  an  invertible  operator (recall that $m \neq 0$) and $$z = \frac12 (u+\overline u) = \mathrm{Re}(u).$$  
 
 We thus define $\mathcal{L}= i  \cnab$, with $X= L^2(\mathbb{T}^d)$, $D(\mathcal{L})= H^1(\mathbb{T}^d)$ and hence
  $X^\alpha= H^\alpha(\mathbb{T}^d)$.
   Assumption \ref{ass1} is clearly satisfied.
 
To illustrate our theory,  we shall consider two classical nonlinearities:
 \begin{itemize}
 \item  power type:  $g(z)= z^2$
 \item Sine-Gordon: $ g(z)= -  \sin z.$
 \end{itemize}
 We have  
$$f(u, \overline{u})=  - i \cnab^{-1}g\big((\textstyle \frac12 (u+ \overline u)\big)$$
 so that $f$ is under the form \eqref{nonlin} with
 \begin{align}
 \label{cas1} & f(v,w)=  \mathcal{B}( F(v) \cdot G(w))=  - \red\tfrac14\black i \cnab^{-1} \left(  v^2 + w^2 + 2 v w \right), \quad g(z) = z^2, \\
 \label{cas2} &  f(v,w) =   \mathcal{B}( F(v) \cdot G(w))=   i \cnab^{-1} \left(\mathrm{sin}(\textstyle \frac12 v) \mathrm{cos}(\textstyle \frac12 w) + \mathrm{cos}(\textstyle \frac12 v) \mathrm{sin}(\textstyle \frac12 w)\right), \quad g(z) = - \sin z.
 \end{align}
 Note that in this case, $\mathcal{B}$ is non-trivial, we have, $ \mathcal{B}=\cnab^{-1} .$
  We can thus use our schemes for this formulation.  
 A natural space to measure the error for wave type equations in terms of $(z, \partial_{t}z)$ is $H^1 \times L^2.$
  In terms of $u$ with the definition \eqref{uz}  the $H^1$ norm is hence a natural choice. 
   We shall thus choose  $\alpha_{0}= 1$.
  \subsubsection{First-order scheme} 
  We shall study our first-order-scheme \eqref{num1Intro}, in dimension $1$, $d=1$.
   Let us then check our assumptions \ref{assnon}, \ref{asscom}, \ref{asstech} with $\alpha_{0}= 1$.
  
    Let us start with the nonlinearity given by  \eqref{cas1}.
    We can take  $a_{0}= 0$ and we 
      we can take $a_{1}= \alpha_{1} = {1 \over 2} + {1 \over 4}= {3 \over 4}$.
       Indeed, let us first check Assumption \ref{asscom}. 
        We use again  the following inequality
    \begin{multline}\label{eik}
      \| \cnab^{-1}( \cnab (v w) - \cnab v  w - v \cnab w) \|_{H^1} \\ \lesssim  \|\cnab (v w) - \cnab v  w - v \cnab w) \|_{L^2}
   \lesssim  \|v\|_{H^{\alpha_{1}}} \|w\|_{H^{\alpha_{1}}}
      \end{multline}
      where we have used Lemma \ref{lemtech} (see below)  for the last inequality  together 
     with the Sobolev embedding $ H^{d \over 4} \subset  L^4$. 
       The  estimate \eqref{eik} implies \eqref{C1}, \eqref{C2} and motivates the choice of $\alpha_{1}.$
        Let us now check the other assumptions.
       We have \red by duality (since $H^1 \subset L^\infty$) that\black
     $$ \|  \cnab^{-1} \left( v  w \right) \|_{L^2} \lesssim  \|v w \|_{H^{- 1}} \lesssim  \|v w\|_{L^1} \lesssim \|v\|_{L^2} \|w\|_{L^2}$$
      and  
      $$  \| f(u, \overline{u}) \|_{H^{3\over 4 }} \lesssim \| |u|^2 \|_{H^{ - {1 \over 4}}} \lesssim \| |u|^2 \|_{L^2} \lesssim \|u \|_{L^2} \|u\|_{L^\infty}
       \lesssim \|u\|_{L^2} \|u \|_{H^{3\over 4}}.$$ 
       This yields \eqref{picard} (the case $\alpha \in (0, 3/4]$ can be obtained from similar arguments and is actually not needed)
        and \eqref{tech2}.
     The estimates \eqref{tech1},  \eqref{tech0}, \eqref{tech0bis} are a consequence of the following  estimate:  for $ \alpha \leq 1$
    $$ \| \cnab^{-1} \left( v  w \right) \|_{H^\alpha } \lesssim \|v w \|_{L^2} \lesssim \|v \|_{L^2} \| w \|_{L^\infty} \lesssim \|v\|_{L^2} \|w \|_{H^{3\over 4}}.$$

 The  nonlinearity \eqref{cas2} can be handled in a similar way,   the only  new non-trivial estimates to obtain are
  \eqref{C1}, \eqref{C2}. We first observe that 
 $$  \| \cnab^{-1}( \cnab \sin u  -  \cos u \cnab u)) \|_{H^1} \\ \lesssim  \|\cnab \sin u  - \cos u \cnab u \|_{L^2}.$$
  So that we only need to estimate $ \|\cnab \sin u  - \cos u \cnab u \|_{L^2}$.
   Details will be given below in Lemma~\ref{lemtech2}.
    We obtain again that
  $$ \|\cnab \sin u  - \cos u \cnab u \|_{L^2} \lesssim C( \|u\|_{H^{3 \over 4}}).$$
  
  We thus obtain that
  
\begin{cor}
      For 
       for every $u_{0}  \in H^{3 \over 4}({\mathbb{T}})$, $u_{1} \in H^{- {1 \over 4}}(\mathbb{T})$ 
       Let $T>0$ and $z \in \mathcal{C}([0, T], H^{3 \over 4}(\mathbb{T})) \cap \mathcal{C}^1([0, T], H^{- {1 \over 4}}(\mathbb{T})) $
  the unique solution of \eqref{kgrO}, with $g(z)= z^2$ or $g(z)= \sin z$. Let  $u^n$ denote the numerical solution given by \eqref{num1Intro}.
   Then there exists $C_{T}>0$ such that
 $$ \|z(n \tau) - \mbox{Re }u^n \|_{H^1} +  \|\partial_{t}z(n \tau) - \mbox{Im }u^n \|_{L^2}  \leq C_{T} \tau, \quad 0 \leq n \tau \leq T.$$
          \end{cor}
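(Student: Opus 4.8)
The plan is to recast the first-order system \eqref{kgroo} in the abstract form \eqref{ev} and verify the hypotheses of Theorem \ref{theo1} with measurement exponent $\alpha_{0}=1$, then transfer the resulting $H^1$-bound on $u$ to the energy norm of $(z,\partial_t z)$ through the change of variables \eqref{uz}. First I set $X=L^2(\mathbb{T})$ and $\mathcal{L}=i\cnab$, so that $X^\alpha=H^\alpha(\mathbb{T})$; since $\cnab$ has real symbol and commutes with conjugation, $\mathcal{A}=-\mathcal{L}+\oD=-2i\cnab$, which is the genuinely dispersive regime where the filter $\varphi_1(\tau\mathcal{A})$ is active, and Assumption \ref{ass1} holds. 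I then fix $a_{0}=0$, $a_{1}=1$, $\alpha_{1}=3/4$ and $\alpha_{0}=1$, so that the constraints $\alpha_{0}\in[0,a_{1}]$ and $\alpha_{1}\in(a_{0},a_{1}]$ of Theorem \ref{theo1} are met. The structural reason one may take $\alpha_{1}<\alpha_{0}$ here is that $\mathcal{B}=\cnab^{-1}$ provides a gain of one derivative.

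Next I would verify Assumption \ref{assnon}. The gain of $\cnab^{-1}$ reduces the first bound in \eqref{picard} to a product estimate $\|u^2\|_{H^{\alpha-1}}\lesssim C(\|u\|_{L^2})\|u\|_{H^\alpha}$, which in dimension one follows from $H^{1/4}\hookrightarrow L^4$ and interpolation for $\alpha\in[0,1]$; the Lipschitz bound at $a_{0}=0$ comes from $\|u^2-v^2\|_{H^{-1}}\lesssim\|(u+v)(u-v)\|_{L^1}\le\|u+v\|_{L^2}\|u-v\|_{L^2}$. The heart of the matter is Assumption \ref{asscom}. Setting $p=v+w$, a direct computation from \eqref{DlowIntro} shows that the $\cnab^{-1}$ inside $\mathcal{B}$ cancels the leading derivative and leaves
\[
\mathcal{C}[f,\mathcal{L}](v,w)=-\tfrac14\,\cnab^{-1}\!\left(\cnab(p^2)-2p\,\cnab p\right),
\]
so that $\|\mathcal{C}[f,\mathcal{L}](v,w)\|_{H^1}\lesssim\|\cnab(p^2)-2p\,\cnab p\|_{L^2}$ is exactly a fractional Leibniz commutator, controlled by \eqref{eik} (Lemma \ref{lemtech}) by $\|v\|_{H^{3/4}}^2+\|w\|_{H^{3/4}}^2$; this gives \eqref{C1} with $\alpha_{1}=3/4$. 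For \eqref{C2} only the quadratic component of $G$ contributes to $\mathcal{C}[G,\mathcal{A}]$, producing the same type of commutator $\cnab(w^2)-2w\,\cnab w$, and the unitarity of $e^{s\mathcal{A}}$ yields the required uniformity in $s$.

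It then remains to check Assumption \ref{asstech} for the flux $\Psi^\tau_{\mathrm{num},1}$. Since $\varphi_1$ is bounded on $i\mathbb{R}$, the functional calculus makes $\varphi_1(\tau\mathcal{A})$ bounded on every $H^s$ uniformly in $\tau$; combined with the derivative gain of $\mathcal{B}=\cnab^{-1}$ and the quadratic structure of the nonlinearity, the estimates of Assumption \ref{asstech} reduce to the same product and duality bounds as above, using the embedding $H^1\hookrightarrow L^\infty$ in $d=1$ to place the $L^\infty$ factor on the difference $u-v$ (measured in $X^{\alpha_{0}}=H^1$). The Sine--Gordon nonlinearity \eqref{cas2} is handled identically; the only genuinely new ingredient is the nonlinear commutator $\cnab\sin u-\cos u\,\cnab u$, whose $L^2$ bound by a continuous function of $\|u\|_{H^{3/4}}$ is the content of Lemma \ref{lemtech2} and which replaces the fractional Leibniz commutator in the verification of \eqref{C1}--\eqref{C2}.

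With all assumptions in force I would invoke Theorem \ref{theo1}: by \eqref{uz} the initial datum $u(0)=u_{0}-i\cnab^{-1}u_{1}$ lies in $H^{3/4}=X^{\alpha_{1}}$ precisely when $u_{0}\in H^{3/4}$ and $u_{1}\in H^{-1/4}$, Theorem \ref{localwp} furnishes $u\in\mathcal{C}([0,T],H^{3/4})$ so that \eqref{alpha1borne} holds, and the conclusion $\|u(n\tau)-u^n\|_{H^1}\le C_T\tau$ follows. Finally, since $z=\mathrm{Re}\,u$ and $\cnab^{-1}\colon L^2\to H^1$ is an isomorphism, the $H^1$ norm of $u$ is equivalent through \eqref{uz} to the energy norm $\|z\|_{H^1}+\|\partial_t z\|_{L^2}$, so the bound transfers to the stated $H^1\times L^2$ estimate for $(z,\partial_t z)$. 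The main obstacle is the commutator estimate underlying \eqref{C1}: because $\cnab=\sqrt{-\Delta+m^2}$ is not a differential operator, the classical cancellation of top-order symbols in the Leibniz rule is unavailable, and one must instead rely on a Kato--Ponce--type fractional Leibniz rule (Lemma \ref{lemtech}), respectively its nonlinear analogue for the composition with $\sin$ (Lemma \ref{lemtech2}).
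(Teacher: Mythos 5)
Your proposal is correct and follows essentially the same route as the paper: the same functional setting and exponents ($X=L^2(\mathbb{T})$, $\mathcal{L}=i\cnab$, $a_0=0$, $\alpha_1=3/4$, $\alpha_0=1$), the same reduction of $\mathcal{C}[f,\mathcal{L}]$ to the fractional Leibnitz commutator $\cnab(p^2)-2p\cnab p$ controlled by Lemma \ref{lemtech} (and by Lemma \ref{lemtech2} for Sine--Gordon), the same duality/product estimates for Assumptions \ref{assnon} and \ref{asstech}, and the same transfer to $(z,\partial_t z)$ via the change of variables \eqref{uz}. The only deviation is your choice $a_1=1$ instead of the paper's $a_1=\alpha_1=3/4$, which is harmless (and in fact makes the formal constraint $\alpha_0\in[0,a_1]$ of Theorem \ref{theo1} literally satisfied, where the paper instead relies, as you correctly identify, on the smoothing of $\mathcal{B}=\cnab^{-1}$ to allow $\alpha_0>\alpha_1$).
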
 
 Such an error estimate for a classical first-order scheme for the equation \eqref{kgrO} would require
  (at least) $H^1$ solutions. 
  
   In higher dimensions, our second-order scheme will give more significant improvements.
  \subsubsection{Second-order scheme}
  We shall now study \eqref{kgrO} formulated as \eqref{kgroo} for $x \in \mathbb{T}^3$, 
  and $g(z)= z^2$. We shall again measure the error in $H^1$ so that $\alpha_{0}= 1$.
    In this case, we could take  $a_{0}=1/2$, nevertheless, since we will be forced to take $\alpha_{2}= 7/4$, 
    which is now bigger than $\alpha_{0}$, it is convenient to choose
    $a_{0}= \alpha_{0}= 1$, $a_{1}= \alpha_{2}= 7/4.$
     To get  \eqref{picard}, we observe that with our choice,  by using the embedding 
     $H^{3 \over 4} \subset L^4$  in dimension $3$, we can write   
  \begin{equation}
  \label{fin1} \|  \cnab^{-1} \left( v  w \right) \|_{H^{1}} \lesssim  \|v w \|_{L^2} 
   \lesssim \|v\|_{L^4} \|w \|_{L^4} \lesssim \|v\|_{H^{3 \over 4}} \|w\|_{H^{3 \over 4}}.
   \end{equation}
    This yields \eqref{picard} for $\alpha = a_{0}$.
    For $\alpha \in (a_{0}, a_{1}]$,  $a_{1}=\alpha_{1}= 7/4$, we use that
     $$ \|  \cnab^{-1} \left( v  w \right) \|_{H^{a_{1}}} \lesssim  \|v w \|_{H^{ {1 \over 4} } } $$
  and hence
    by using  the generalized Leibnitz rule, and the embeddings $H^1 \subset L^6, $ $ H^{1 \over 2} \subset L^3$
   we get
 $$ \| v w \|_{H^{  {1 \over 4}}} \lesssim  \| \langle \nabla \rangle^{1 \over 4}v \|_{L^6} \|w\|_{L^3} +  \| \langle \nabla \rangle^{1 \over 4}w \|_{L^6} \|v\|_{L^3} \lesssim \|v \|_{H^{5\over 4}} \|w \|_{H^{1 \over 2}} + \|w \|_{H^{5\over 4}} \|v \|_{H^{1 \over 2}}.$$  
    We have thus obtained that 
  \begin{equation}
  \label{fin0}   \|  \cnab^{-1} \left( v  w \right) \|_{H^{a_{1}}} \lesssim \|v\|_{H^{a_{0}}} \|w\|_{H^{a_{1}}} +  \|w\|_{H^{a_{0}}} \|v\|_{H^{a_{1}}}.
  \end{equation}
       This yields   \eqref{picard}.       
  For \eqref{tech02}, we use that 
  \begin{equation}
  \label{fin2}  \|  \cnab^{-1} \left( v  w \right) \|_{H^{1}} \lesssim  \|v w \|_{L^2} \lesssim \| v \|_{L^3} \| w \|_{L^6} \lesssim 
    \|v \|_{H^{1 \over 2}} \|w \|_{H^1},
    \end{equation}
    since $H^1 \subset L^6$, $H^{1 \over 2} \subset L^3$.
    For \eqref{tech0bisbis2}, we have  that for $k=0, \, 1$,  
    $$ \|\cnab^k( \cnab^{-1} u^2 ) \|_{H^1} \lesssim \| u \nabla u \|_{L^2} \lesssim \|u\|_{L^4} \| \nabla u \|_{L^4}
     \lesssim \|u \|_{H^{7 \over 4}}^2,$$
    $$ \| \cnab^{-1}( u \cnab u )\|_{H^1} \lesssim  \| u \nabla u \|_{L^2}    \lesssim \|u \|_{H^{7 \over 4}}^2, $$
    and that
  \begin{multline*}
  \| \cnab^{-1}( u   \cnab^{-1}( v^2) )\|_{H^1} \| \lesssim \|u \cnab^{-1}v^2 \|_{L^2} 
  \lesssim  \|u\|_{L^3} \| \cnab^{-1} v^2 \|_{L^6} \\ \lesssim \|u \|_{H^{1 \over 2}} \|v^2 \|_{L^2} \lesssim \|u \|_{H^{1 \over 2}}
    \|v\|_{L^4}^2 \lesssim \|u \|_{H^{1 \over 2}} \|v \|_{H^{3 \over 4}}^2.
    \end{multline*}
    In a similar way, we get \eqref{tech0bisbis3} from 
  $$ \| \cnab( \cnab^{-1} ( u \cnab^{-1}(v^2) ) \|_{H^1} \lesssim \|u\|_{H^1} \| \cnab^{-1}(v^2) \|_{L^\infty}
   +  \|u\|_{L^6} \|v \|_{L^6}^2 \lesssim \|u\|_{H^1} \|v\|_{H^1}^2,$$
   since $\alpha_{2}= 7/4$.
   
  To get  \eqref{tech12} \eqref{tech22} (since we have chosen $a_{0}= \alpha_{0}=1$, there is only one set of assumptions
   to check) we can use again \eqref{fin1} and \eqref{fin2}, 
   so that it only remains to estimate the terms of the type $D_{i}f \cdot f$, we use that
   $$ \|\cnab^{-1} (u \cnab^{-1}(vw) \|_{H^1}\lesssim \|u\|_{L^3} \| \cnab^{-1} (vw) \|_{L^6}
    \lesssim \|u\|_{H^{1 \over 2}} \|vw\|_{L^2} \lesssim \|u\|_{H^1} \|v\|_{H^1} \|w\|_{H^1}.$$
    To get  \eqref{tech12}, we can use again \eqref{fin0} since $a_{1}= \alpha_{2}$ with
     the following estimate to handle the terms involving  $D_{i}f \cdot f$
    \begin{multline*}
    \|\cnab^{-1} (u \cnab^{-1}(vw) \|_{H^{7 \over 4}} \lesssim \|u\cnab^{-1}(vw) \|_{H^{3 \over 4}}  \\
     \lesssim \|\cnab^{3 \over 4} u\|_{L^3} \|\cnab^{-1}(vw)\|_{L^6} + \|u\|_{L^6} \|\|\cnab^{-{1 \over 4}}(vw)\|_{L^3}
      \lesssim \|u\|_{H^{5\over 4}} \|v\|_{H^{5 \over 4}}\|w\|_{H^{5\over 4}}.
      \end{multline*}
       
   It only remains to check Assumption \ref{asscom2}. 
    Let us set $B(v,w)= \cnab^{-1} (v w)$, we can easily deduce Assumption \ref{asscom2}, if we can prove that
    \begin{equation}
    \label{cestfini2} \| \mathcal{C}^2 [B, \cnab ] (v,w) \|_{H^1} \lesssim \|v\|_{H^{7 \over 4}}  \|w \|_{H^{7 \over 4}}.
    \end{equation}
     By explicit computation we have
    \begin{multline}
    \label{cestfini} 
     \mathcal{C}^2 [B, \cnab ] (v,w)
      = \cnab^{-1} \left( \cnab^2(vw) - \cnab^2 v\,  w - v\,  \cnab^2 w\right)  \\
     +   2 \cnab^{-1}(\cnab v \cnab w) 
        - 2 \cnab^{-1}\left( \cnab(v \cnab w) - v \cnab^2 w  +   \cnab(w \cnab v) - w \cnab^2 v \right).
     \end{multline}
    Since $\cnab^2 = m^2 - \Delta$, we get from the standard Leibnitz formula that
    \begin{multline*}
      \left \|\cnab^{-1} \left( \cnab^2(vw) - \cnab^2 v\,  w - v \, \cnab^2 w\right) \right\|_{H^1}
     \lesssim \| \cnab^2(vw) - \cnab^2 v\,  w - v\,  \cnab^2 w\|_{L^2}\\ \lesssim \|v\|_{L^4}\| w\|_{L^4}
      + \| \nabla v \|_{L^4} \|\nabla w \|_{L^4}
      \end{multline*}
       and hence from the embedding $H^{3\over 4} \subset L^4$ in dimension $3$, we get
      $$  \left \|\cnab^{-1} \left( \cnab^2(vw) - \cnab^2 v w - v \cnab^2 w\right) \right\|_{H^1}\lesssim  \|v\|_{H^{7 \over 4}}  \|w \|_{H^{7 \over 4}}.$$
    For the second term in the right hand side of \eqref{cestfini}, we use again
    $$ \| \cnab^{-1}(\cnab v \cnab w)  \|_{H^1} \lesssim  \|v\|_{H^{7 \over 4}}  \|w \|_{H^{7 \over 4}}.$$
     The most difficult term is  $\cnab^{-1}\left( \cnab(v \cnab w) - v \cnab^2 w\right) $, for this one, we use again the Kato-Ponce
     inequality to write
   \begin{multline*}
   \|\cnab^{-1}\left( \cnab(v \cnab w) - v \cnab^2 w \right)\|_{H^1} \lesssim
     \| \cnab(v \cnab w) - v \cnab^2 w\|_{L^2} \\
    \lesssim   \| \nabla v\|_{L^4} \| \cnab w \|_{L^4} \lesssim  \|v\|_{H^{7 \over 4}}  \|w \|_{H^{7 \over 4}}.
     \end{multline*}
     We thus obtain that
   \begin{cor}
      For 
       for every $u_{0}  \in H^{7 \over 4}({\mathbb{T}^3})$, $u_{1} \in H^{ {3 \over 4}}(\mathbb{T}^3)$ 
       Let $T>0$ and $z \in \mathcal{C}([0, T], H^{7 \over 4}(\mathbb{T}^3)) \cap \mathcal{C}^1([0, T], H^{{3 \over 4}}(\mathbb{T}^3)) $
  the unique solution of \eqref{kgrO}, with $g(z)= z^2$. Let  $u^n$ denote the numerical solution given by \eqref{num1Intro}.
   Then there exists $C_{T}>0$ such that
 $$ \|z(n \tau) - \mbox{Re }u^n \|_{H^1} +  \|\partial_{t}z(n \tau) - \mbox{Im }u^n \|_{L^2}  \leq C_{T} \tau^2, \quad 0 \leq n \tau \leq T.$$
          \end{cor}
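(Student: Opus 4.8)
The plan is to obtain this corollary as a direct application of the abstract second-order result, Theorem \ref{theo2}, to the complex first-order reformulation \eqref{kgroo}. I would first fix the functional setting: $X = L^2(\mathbb{T}^3)$, $\mathcal{L} = i\cnab$ with $\mathcal{D}(\mathcal{L}) = H^1(\mathbb{T}^3)$, so that $X^\alpha = H^\alpha(\mathbb{T}^3)$; since $\cnab$ has a real Fourier multiplier it commutes with conjugation, whence $\oD = -\mathcal{L}$ and $\mathcal{A} = -2\mathcal{L} = -2i\cnab$, and Assumption \ref{ass1} is immediate ($\mathcal{L}$ skew-adjoint generating a unitary group, $\mathcal{A}$ generating a unitary group, $[\mathcal{L},\oD]=0$). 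The nonlinearity is the tensorized quadratic form \eqref{cas1} with $\mathcal{B} = \cnab^{-1}$, so $\mathcal{B}$ recovers one derivative. Because the commutator threshold will force $\alpha_2 = 7/4 > \alpha_0$, I would measure the error in $H^1 = X^1$ and set $a_0 = \alpha_0 = 1$, $a_1 = \alpha_2 = 7/4$; with $a_0 = \alpha_0$ only one family of stability estimates in Assumption \ref{asstech2} must be checked.

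The bulk of the verification consists in checking the hypotheses of Theorem \ref{theo2}. The well-posedness estimates \eqref{picard} of Assumption \ref{assnon} follow from the product estimates \eqref{fin1} and \eqref{fin0}, using the derivative gain of $\mathcal{B}=\cnab^{-1}$ together with the three-dimensional embeddings $H^{3/4}\subset L^4$, $H^1\subset L^6$, $H^{1/2}\subset L^3$ and the fractional Leibniz rule. The technical bounds of Assumption \ref{asstech2}, namely \eqref{tech02}, \eqref{tech0bisbis2}, \eqref{tech0bisbis3} and the numerical-flux estimates \eqref{tech0bis2}, \eqref{tech12}, \eqref{tech22}, reduce to \eqref{fin1}, \eqref{fin0}, \eqref{fin2} and the displayed estimates for the cubic terms of type $D_i f\cdot f$, combined with the boundedness of $\varphi_1(\tau\mathcal{A})$ and $\varphi_2(\tau\mathcal{A})$ on every Sobolev space (by the functional calculus, since $\varphi_1,\varphi_2$ are bounded on $i\mathbb{R}$).

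The heart of the matter, and the step I expect to be the main obstacle, is Assumption \ref{asscom2}. Because $\mathcal{A} = -2\mathcal{L} = -2i\cnab$ and both $f$ and the component map $G$ in \eqref{cas1} are quadratic, every commutator appearing there --- the $\mathcal{L}$-double commutator in \eqref{C12}, the $\mathcal{A}$-double commutator in \eqref{C22}, and the nested commutator in \eqref{C32} --- reduces to double $\cnab$-commutators of the bilinear map $B(v,w)=\cnab^{-1}(vw)$, so the whole assumption follows once the master estimate \eqref{cestfini2}, $\|\mathcal{C}^2[B,\cnab](v,w)\|_{H^1}\lesssim \|v\|_{H^{7/4}}\|w\|_{H^{7/4}}$, is established. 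To prove it I would use the explicit expansion \eqref{cestfini}: the symmetric pieces $\cnab^{-1}(\cnab^2(vw)-\cnab^2 v\,w-v\,\cnab^2 w)$ and $\cnab^{-1}(\cnab v\,\cnab w)$ are disposed of by the ordinary Leibniz rule (using $\cnab^2 = m^2-\Delta$) and $H^{3/4}\subset L^4$, whereas the genuinely delicate term $\cnab^{-1}(\cnab(v\cnab w)-v\cnab^2 w)$ is a true fractional commutator that the Leibniz rule cannot handle; here I would invoke the Kato--Ponce inequality to bound it by $\|\nabla v\|_{L^4}\|\cnab w\|_{L^4}\lesssim \|v\|_{H^{7/4}}\|w\|_{H^{7/4}}$. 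The crucial gain is that the nested commutator structure, together with the derivative recovered by $\mathcal{B}$, leaves at most $7/4$ derivatives to be spread over $v$ and $w$, which is exactly the improvement over the $H^2$ regularity a classical second-order scheme would demand.

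With all assumptions verified, Theorem \ref{theo2} applies to the complex initial datum $u(0) = u_0 - i\cnab^{-1}u_1$ prescribed by \eqref{uz}, which lies in $X^{7/4} = H^{7/4}$ precisely because $u_0\in H^{7/4}$ and $\cnab^{-1}u_1\in H^{7/4}$, i.e. $u_1\in H^{3/4}$, matching the hypotheses. The theorem then yields $\|u(n\tau)-u^n\|_{H^1}\le C_T\tau^2$ for the output $u^n$ of the second-order scheme \eqref{num2Intro}. It remains to translate this back to the physical variables through \eqref{uz}: since $z=\mathrm{Re}\,u$ we get $\|z(n\tau)-\mathrm{Re}\,u^n\|_{H^1}\le \|u(n\tau)-u^n\|_{H^1}$, and since $\partial_t z = -\cnab\,\mathrm{Im}\,u$ the reconstructed velocity error is $\|\cnab\,\mathrm{Im}(u(n\tau)-u^n)\|_{L^2}\lesssim \|u(n\tau)-u^n\|_{H^1}$, the one extra derivative being absorbed by the $H^1$-control on $u$. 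Adding the two contributions gives the claimed $H^1\times L^2$ error bound of order $\tau^2$.
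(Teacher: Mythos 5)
Your proposal is correct and takes essentially the same route as the paper: the same functional setting with $a_0=\alpha_0=1$, $a_1=\alpha_2=7/4$, the same verification of Assumptions \ref{assnon} and \ref{asstech2} via \eqref{fin1}, \eqref{fin0}, \eqref{fin2} and the cubic $D_if\cdot f$ bounds, and the same reduction of Assumption \ref{asscom2} to the master estimate \eqref{cestfini2}, proved through the expansion \eqref{cestfini} with the classical Leibniz rule (using $\cnab^2=m^2-\Delta$) for the symmetric pieces and Kato--Ponce for the delicate term $\cnab^{-1}\left(\cnab(v\cnab w)-v\cnab^2 w\right)$, before invoking Theorem \ref{theo2}. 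Your explicit translation back to $(z,\partial_t z)$ through \eqref{uz}, and your reading of the scheme as \eqref{num2Intro} (the statement's citation of \eqref{num1Intro} is evidently a typo, given the $\tau^2$ rate), are likewise consistent with the paper.
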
 

                               Note that   second-order convergence of  classical  schemes for the Klein--Gordon equation~\eqref{kgrO}   requires (at least) 
  $H^2$ solutions. \red  In a similar way we can treat the wave equation 
$$
\partial_{tt} z - \Delta z  =  h(z) 
$$
by setting $g(z)= h(z) + m^2 z$ for some $m \neq 0$ such that
$
\partial_{tt} z - \Delta z +m^2 z  =  g(z)
$.
 \black 
   
\section{Technical Lemma}
\label{sectiontech}
 In this section, we shall use the Littlewood-Paley decomposition on $\mathbb{T}^d$.
Let us recall some basic facts, we refer to the book \cite{Bahouri-Chemin-Danchin} for example  for the proofs. We take a partition of unity of the form
$$
1={\red  \phi}_{-1}(\xi)+ \sum_{k \geq 0}{\red  \phi}_{k}(\xi)
$$
where ${\red  \phi}_{-1}$ is supported in the ball $\overline{B}(0,1)$ and each ${\red  \phi}_{k}(\xi)= {\red  \phi}(\xi/2^k)$, $k\geq 0$ is supported in the annulus $2^{k-1} \lesssim |\xi | \lesssim 2^{k+1}$. We can then decompose any tempered distribution as
$$
u = \sum_{k \geq -1} \Delta_{k}u , \qquad  \mathcal{F}(\Delta_{k}u) (\xi)= {\red  \phi}_{k}(\xi) \hat u(\xi),
$$
note that we can use the Fourier transform on $\mathbb{R}^d$ or on $\mathbb{T}^d$, in this case
 $\xi \in \mathbb{Z}^d.$
We shall  use the following facts
  (Bernstein inequality):
For every $\sigma \geq 0$ and every $p \in [1,\infty]$, there exist constants $c>0$ and $C>0$ such that for every $k \geq 0$, we have
\beq
\label{bernstein}
c2^{\sigma k}\| ( {\red  \phi}_{k}(-i\nabla)) u\|_{L^p} \leq \| \left|-i\nabla\right|^\sigma ( {\red  \phi}_{k}(-i\nabla) u )\|_{L^p} \leq C 2^{\sigma k} \| ( {\red  \phi}_{k}(-i\nabla) u)\|_{L^p}.
\eeq
For two functions $u, \,  v$, we have Bony's decomposition
\begin{equation}
\label{Bony} uv = T_{u}v + T_{v} u + R(u,v)
\end{equation}
 where we set
 \begin{align*}
 &  T_{a}b= \sum_{q} S_{q-1}a \Delta_{q} b, \quad   S_{q-1}a= \sum_{k \leq q-2} \Delta_{k}a, \quad R(u,v)= \sum_{|p-q| \leq 2} \Delta_{p} a \Delta_{q}b.
 \end{align*}
 We shall first prove
 \begin{lemma}
 \label{lemtech}
  For $u, \, v \in H^{{1 \over 2 } + {d \over 4}}$, we have the estimate
  \begin{equation}
  \label{estlemtech}
   \|| \nabla |(u v) - |\nabla |u v - u \red |\nabla | v\black  \|_{L^2} \lesssim \| u \|_{H^{{1 \over 2} + {d \over 4} }} \| v \|_{H^{{1 \over 2} + {d \over 4} }}.
   \end{equation}
   This estimate also holds with $|\nabla|$ replaced by $\cnab$. 
 \end{lemma}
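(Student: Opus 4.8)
The plan is to view the left-hand side of \eqref{estlemtech} as a bilinear Fourier multiplier and to exploit the algebraic gain in its symbol through the Littlewood--Paley decomposition recalled above. Writing $Q(u,v) = |\nabla|(uv) - |\nabla|u\,v - u\,|\nabla|v$, a computation on the Fourier side gives $\widehat{Q(u,v)}(\xi) = \sum_{\eta}\hat u_\eta\,\hat v_{\xi-\eta}\,\sigma(\eta,\xi-\eta)$ with symbol $\sigma(\eta,\zeta) = |\eta+\zeta|-|\eta|-|\zeta|$. The key elementary observation, which I would record first, is that the triangle inequality yields $-2\min(|\eta|,|\zeta|)\le\sigma(\eta,\zeta)\le 0$, hence
\[
|\sigma(\eta,\zeta)|\le 2\min(|\eta|,|\zeta|).
\]
Thus $Q$ behaves like one derivative falling on the factor of \emph{lower} frequency, and it is precisely this gain that lowers the required regularity from $1$ to $\tfrac12+\tfrac{d}{4}$.

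Next I would decompose $u=\sum_j\Delta_j u$, $v=\sum_k\Delta_k v$ and split $Q(u,v)=\sum_{j,k}Q(\Delta_j u,\Delta_k v)$ into the three standard regions coming from \eqref{Bony} (low--high $j\le k-2$, high--low $k\le j-2$, and diagonal $|j-k|\le1$). Since $\sigma$ is symmetric in its two arguments, the first two regions coincide after exchanging $u$ and $v$, so it suffices to treat the low--high and diagonal pieces. For a fixed pair $(j,k)$ with $j\le k-2$ I would use the reformulation $Q(\Delta_j u,\Delta_k v)=[\,|\nabla|,\Delta_j u\,]\Delta_k v - (|\nabla|\Delta_j u)\Delta_k v$: the second term is bounded directly by $2^{j}\|\Delta_j u\|_{L^\infty}\|\Delta_k v\|_{L^2}$, while for the commutator the symbol bound $|\sigma|\lesssim 2^{j}$ on the relevant frequency support gives the same control. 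Combining with Bernstein's inequality \eqref{bernstein} in the form $\|\Delta_j u\|_{L^\infty}\lesssim 2^{jd/2}\|\Delta_j u\|_{L^2}$ yields
\[
\|Q(\Delta_j u,\Delta_k v)\|_{L^2}\lesssim 2^{j(1+d/2)}\|\Delta_j u\|_{L^2}\,\|\Delta_k v\|_{L^2}.
\]

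I would then sum, setting $U_j=2^{js}\|\Delta_j u\|_{L^2}$ and $V_k=2^{ks}\|\Delta_k v\|_{L^2}$ with $s=\tfrac12+\tfrac{d}{4}$. In the low--high region each $Q(\Delta_j u,\Delta_k v)$ is frequency-localised in an annulus of size $2^k$, so the outputs are almost orthogonal in $k$; the exponent identity $1+\tfrac{d}{2}-s=s$ turns the bound into $2^{js}U_j\cdot 2^{-ks}V_k$, and a Cauchy--Schwarz summation over $j\le k-2$ (using $s>0$) followed by orthogonality in $k$ gives $\lesssim\|u\|_{H^s}\|v\|_{H^s}$. In the diagonal region the output is only supported in a ball of radius $2^{k}$, so I would instead pair against a test function in $L^2$ and use that each diagonal block contributes only at frequencies $\lesssim 2^k$; here the identity $1+\tfrac{d}{2}-2s=0$ reduces the sum to $\sum_k U_kV_k\le\|u\|_{H^s}\|v\|_{H^s}$. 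The value $\tfrac12+\tfrac{d}{4}$ is exactly the threshold at which all three sums close. Finally, for the operator $\cnab$ the symbol $\sqrt{|\eta+\zeta|^2+m^2}-\sqrt{|\eta|^2+m^2}-\sqrt{|\zeta|^2+m^2}$ is smooth and satisfies $|\sigma_m(\eta,\zeta)|\lesssim\langle\min(|\eta|,|\zeta|)\rangle$ (the mass perturbs only the low-frequency blocks, which are harmless since $s>0$), so the identical decomposition applies verbatim.

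I expect the main obstacle to be the rigorous justification, for each dyadic pair, that the pointwise symbol bound $|\sigma|\lesssim 2^{\min(j,k)}$ upgrades to the operator bound $\|Q(\Delta_j u,\Delta_k v)\|_{L^2}\lesssim 2^{\min(j,k)}\|\Delta_j u\|_{L^\infty}\|\Delta_k v\|_{L^2}$, that is, the control of the commutator $[\,|\nabla|,\Delta_j u\,]\Delta_k v$. The cleanest route is to represent the multiplier via $|\eta+\zeta|-|\zeta|=\int_0^1 \eta\cdot\tfrac{\zeta+t\eta}{|\zeta+t\eta|}\,dt$, which is smooth on the support $|\eta|\sim2^j\ll|\zeta|\sim2^k$, and to estimate the resulting kernel by a Coifman--Meyer type argument; the remaining summations are then routine.
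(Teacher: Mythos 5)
Your proposal is correct, and it rests on the same two pillars as the paper's proof: a Littlewood--Paley/Bony decomposition combined with the symbol cancellation $\bigl||\eta+\zeta|-|\eta|-|\zeta|\bigr|\le 2\min(|\eta|,|\zeta|)$, with the exponent numerology $1+\tfrac d2=2s$ closing all sums exactly at $s=\tfrac12+\tfrac d4$; your diagonal region is the paper's remainder $R(u,v)$ (handled there by citing Theorem 2.85 of \cite{Bahouri-Chemin-Danchin}, while your crude term-by-term bound works just as well, since no gain beyond $2^{\min(j,k)}$ is needed there). Where you genuinely diverge is at the low--high block estimate, which you rightly identify as the crux. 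You reformulate it as the commutator $[\,|\nabla|,\Delta_j u\,]\Delta_k v$ and appeal to a Coifman--Meyer-type kernel bound via the representation $|\eta+\zeta|-|\zeta|=\int_0^1\eta\cdot\tfrac{\zeta+t\eta}{|\zeta+t\eta|}\,dt$; this is viable, with two minor caveats: at $j=k-2$ your cutoffs allow $|\zeta+t\eta|$ to degenerate (shift to $j\le k-3$ or absorb that case into the diagonal), and on $\mathbb{T}^d$ one needs a periodic version of the multiplier estimate. The paper instead avoids any smoothness of the symbol altogether: writing the paraproduct block on the Fourier side as a convolution sum over the low frequencies $l$, it applies Cauchy--Schwarz against the Sobolev weight $1+|l|^{1+\frac d2}=1+|l|^{2s}$, so that only the pointwise bound $|m(\xi,l)|\le|l|$ enters, at the price of the elementary counting estimate $\sup_{|\xi|\sim 2^q}\sum_{|l|\lesssim 2^q}|m(\xi,l)|^2/(1+|l|^{1+\frac d2})\lesssim 2^{q(1+\frac d2)}$. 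This Schur/Cauchy--Schwarz step dissolves exactly the obstacle you flag as the main difficulty --- the pointwise symbol bound \emph{does} upgrade to an operator bound once you pay with the $H^{s}$ weight on $u$ rather than with kernel decay --- and it works verbatim for discrete frequencies on the torus, keeping the proof self-contained; you could adopt it to remove your reliance on bilinear multiplier machinery. Finally, for the $\cnab$ variant your re-derivation of the symbol bound $|\sigma_m(\eta,\zeta)|\lesssim\langle\min(|\eta|,|\zeta|)\rangle_m$ is correct but more than is needed: the paper simply observes that $\cnab-|\nabla|$ is a bounded operator, so the two trilinear expressions differ by terms controlled by product estimates alone.
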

 \begin{proof}
  The fact that the same estimate holds for   $|\nabla|$ and  $\cnab$ just follows from the observation that
   $ \cnab - |\nabla|$ is a bounded operator. We shall thus prove \eqref{estlemtech}
 We can use the decomposition \eqref{Bony}. The result then follows if we can estimate the following terms
 $$ I= |\nabla| (T_{u}v) - T_{ |\nabla| u} v - T_{u} |\nabla| v, \quad II= R(u, |\nabla |v), \, III= |\nabla| R(u,v)$$
  and symmetric terms.
 
 By using Theorem 2.85 of \cite{Bahouri-Chemin-Danchin}, we get that 
 $$ \| II \|_{L^2} + \|III\|_{L^2} \lesssim \| u \|_{H^{{1 \over 2} + {d \over 4} }} \| v \|_{H^{{1 \over 2} + {d \over 4} }}$$
  so that we only need to estimate $I$.
   By using the frequency localization, we get that
   $$ \|I\|_{L^2}^2 \lesssim \sum_{q} \left\|  |\nabla| (\red S_{q-1}\black  u \Delta_{q} v) - |\nabla| \red S_{q-1}\black u \Delta_{q} v -  \red S_{q-1}\black u |\nabla| \Delta_{q} v\right\|^2_{L^2}.$$
   By using the fourier Transform we need to estimate
 $$ I_{q}= \sum_{k} \left| \sum_{l} m(k-l, l) \widehat{\Delta_{q}v}( k-l) \widehat{\red S_{q-1} u}(l) \right|^2$$
 where $|m(\xi, \eta)|=|  |\xi + \eta| - |\xi| - |\eta | | \lesssim |\xi| |\eta|/ ( |\xi + \eta| + |\xi|+ |\eta|).$
  We remind that due to the frequency localisation in the Littlewood-Paley decomposition, we have
  $  |l| \lesssim 2^{q-1} $ and $ 2^{q-1} \leq |k-l| \leq 2^{q+1}.$
   By using Cauchy-Schwarz and Fubini, we then get that
   $$ I_{q} \lesssim  \sup_{ 2^{q-1} \leq |\xi| \leq 2^{q+1} }  \sum_{|l| \leq 2^{q-2}} {|m(\xi, l)|^2 \over 
    1 + |l|^{ 1 + {d \over 2}}}  \|u\|_{H^{{1 \over 2}+{d \over 4}}}^2 \|\Delta_{q}v \|_{L^2}^2.$$
     Since $|m(\xi,l)| \leq |l|,$ we have that
     $$   \sum_{|l| \leq 2^{q-2}} {|m(\xi, l)|^2 \over 
    1 + |l|^{ 1 + {d \over 2}}} \lesssim    2^{q( 1 + {d \over 2})}.$$
   Therefore, we get that
   $$   \|I\|_{L^2}^2 \lesssim \sum_{q} I_{q} \lesssim 2^{q( 1 + {d \over 2})}  \|u\|_{H^{{1 \over 2}+{d \over 4}}}^2 \|\Delta_{q}v \|_{L^2}^2
    \lesssim  \|u\|_{H^{{1 \over 2}+{d \over 4}}}^2 \|v\|_{H^{{1 \over 2}+{d \over 4}}}^2$$
    where the last inequality comes from the Bernstein inequality and the almost orthogonality of the terms.
    This concludes the proof.
 
 \end{proof} 
 
 \begin{lemma}
 \label{lemtech2}
 For $d=1$, 
  $u \in H^{ {3 \over 4}}(\mathbb{T})$, we have
  $$ \|\cnab \sin u  - \cos u \cnab u \|_{L^2} \lesssim C( \|u\|_{H^{{3 \over 4}}}).$$
 \end{lemma}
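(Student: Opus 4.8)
The plan is to reduce the statement to a paradifferential estimate that exploits the chain-rule structure of the defect $\cnab \sin u - \cos u\,\cnab u$. First I would replace $\cnab$ by $|\nabla|$: since the symbol $\sqrt{|\xi|^2+m^2}-|\xi|$ is of order $-1$, the operator $\cnab - |\nabla|$ maps $H^s$ into $H^{s+1}$, so that $(\cnab-|\nabla|)\sin u$ and $(\cnab-|\nabla|)u$ lie in $H^{7/4}\subset L^2$ with norm $\lesssim C(\|u\|_{H^{3/4}})$ (using $\sin u\in H^{3/4}$); multiplying the second by $\cos u\in L^\infty$ keeps it in $L^2$. Hence it suffices to prove $\||\nabla|\sin u-\cos u\,|\nabla|u\|_{L^2}\lesssim C(\|u\|_{H^{3/4}})$. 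Throughout I would use the embeddings $H^{3/4}(\mathbb{T})\subset C^{1/4}\cap L^\infty$ (valid since $3/4>1/2=d/2$), which guarantee that $\cos u$ is well defined and that $\|\cos u\|_{H^{3/4}}+\|\cos u\|_{C^{1/4}}\le C(\|u\|_{H^{3/4}})$ by the standard composition estimates for smooth functions in Sobolev and H\"older spaces.

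The key step is paralinearization. By Bony's paralinearization theorem (see \cite{Bahouri-Chemin-Danchin}), applied with $s=3/4$ and $d=1$ so that the gain $s-d/2=1/4$ produces a remainder in $H^{2s-d/2}=H^1$, one has
\[
\sin u = T_{\cos u}u + R, \qquad \|R\|_{H^1}\le C(\|u\|_{H^{3/4}}).
\]
Decomposing the product with Bony's formula \eqref{Bony} gives $\cos u\,|\nabla|u = T_{\cos u}|\nabla|u + T_{|\nabla|u}\cos u + R(\cos u,|\nabla|u)$. Subtracting and recognising the commutator, I obtain the identity
\[
|\nabla|\sin u-\cos u\,|\nabla|u = \bigl([|\nabla|,T_{\cos u}]u\bigr) + |\nabla|R - T_{|\nabla|u}\cos u - R(\cos u,|\nabla|u),
\]
so that it remains to bound the four terms on the right in $L^2$.

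Then I would estimate each term using standard paradifferential calculus from \cite{Bahouri-Chemin-Danchin}. For the commutator I use that $T_{\cos u}$ is a paraproduct with symbol in $C^{1/4}$ and that commuting it with the order-one multiplier $|\nabla|$ gains $1/4$ derivative, i.e. $[|\nabla|,T_{\cos u}]$ maps $H^{3/4}$ into $H^{3/4-1+1/4}=L^2$ with norm $\lesssim\|\cos u\|_{C^{1/4}}\|u\|_{H^{3/4}}$. For the paralinearization remainder, $\||\nabla|R\|_{L^2}\lesssim\|R\|_{H^1}\le C(\|u\|_{H^{3/4}})$. For the paraproduct $T_{|\nabla|u}\cos u$, I use $|\nabla|u\in H^{-1/4}\subset C^{-3/4}$ together with the negative-index bound $\|T_a b\|_{H^{\sigma+\tau}}\lesssim\|a\|_{C^{\tau}}\|b\|_{H^{\sigma}}$ for $\tau=-3/4$, $\sigma=3/4$, landing in $L^2$. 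Finally the resonant term obeys $\|R(\cos u,|\nabla|u)\|_{L^2}\lesssim\|\cos u\|_{H^{3/4}}\||\nabla|u\|_{H^{-1/4}}$, since the two indices sum to $3/4-1/4=1/2>0$, which is exactly the condition for the remainder operator to map into $H^{3/4-1/4-d/2}=L^2$.

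The main obstacle is that every exponent is borderline: the paralinearization remainder lands in $H^1$ with no room to spare (so $|\nabla|R$ is exactly in $L^2$), the commutator gains exactly the $1/4=s-d/2$ H\"older regularity available to $\cos u$, and the resonant indices sum to exactly $1/2>0$. Consequently the argument has no slack at regularity $3/4$ in $d=1$, and one must be careful that the composition estimates for $\cos u$ hold precisely at the thresholds $C^{1/4}$ and $H^{3/4}$. An alternative, fully self-contained route would replace the cited paralinearization theorem by the telescoping identity $\sin u=\sum_q m_q\Delta_q u$ with $m_q=\int_0^1\cos(S_q u+t\Delta_q u)\,dt$, and reproduce the requisite symbol estimates by hand as in the proof of Lemma \ref{lemtech}.
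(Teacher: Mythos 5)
Your proof is correct and follows essentially the same route as the paper: paralinearization $\sin u = T_{\cos u}u + R$ with $R\in H^{2s-d/2}=H^1$, Bony decomposition of $\cos u\,\cnab u$, the paraproduct and remainder estimates of \cite{Bahouri-Chemin-Danchin} (Theorems 2.82 and 2.85) for $T_{\cnab u}\cos u$ and $R(\cos u,\cnab u)$, and a gain of $1/4$ derivative on the commutator term. The only (harmless) divergences are that you first trade $\cnab$ for $|\nabla|$ and then cite the standard paradifferential commutator estimate $[|\nabla|,T_{\cos u}]\colon H^{3/4}\to L^2$ for $\cos u\in C^{1/4}$, whereas the paper keeps $\cnab$ and proves this bound by hand via frequency localization and its Lemma \ref{lemtech} --- precisely the self-contained alternative you sketch in your closing paragraph.
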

 \begin{proof}  
 By using for \red example \black \cite{Metivier} Theorem 5.2.5 (still valid on the torus), we have that for $u \in H^s$, 
   $s>d/2$ (we take $s=3/4$), $ \sin u - T_{\cos u } u \in  H^{2s - {d \over 2}}$.
    We thus get that
    $$  \cnab \sin u = \cnab ( T_{\cos u } u )+ \mathcal{R}_{1}$$
     with $\mathcal{R}_{1} \in H^{2s- { 3 \over 2}}.$
  Next, we can write that
  $$ \cos u \cnab u= T_{\cos u} \cnab u  + T_{\cnab u} \cos u + R(\cos u, \cnab u).$$
   By using  again Theorem 2.85  of \cite{Bahouri-Chemin-Danchin}, we have
    that
    $$ \|R(\cos u, \cnab u ) \|_{L^2} \lesssim \|\cos u \|_{H^{3 \over 4}} \| \cnab u \|_{H^{-{1 \over 4}}} \lesssim  C( \|u\|_{H^{{3 \over 4}}}).$$
     By using also   Theorem 2.82 of \cite{Bahouri-Chemin-Danchin}, we also have 
     $$ \|T_{\cnab u} \cos u\|_{L^2} \lesssim  \| \cnab^{1\over 4} u \|_{L^\infty} \| \cos u \|_{H^{3 \over 4}}
 \lesssim C( \|u\|_{H^{{3 \over 4}}}).$$
 Therefore, it only remains to estimate
 $$ \cnab ( T_{\cos u } u ) - T_{\cos u} \cnab u.$$
 By frequency localization, we have that
 $$  \|\cnab ( T_{\cos u } u ) - T_{\cos u} \cnab u\|_{L^2}^2
   \lesssim \sum_{k}  \| \cnab (S_{k-1} \cos u \, \Delta_{k} u)- S_{k-1} \cos u \cnab \Delta_{k} u \|_{L^2}^2, $$
   therefore, by using Lemma \ref{lemtech}, we get that
 \begin{multline*}
  \|\cnab ( T_{\cos u } u ) - T_{\cos u} \cnab u\|_{L^2}^2
    \lesssim \sum_{k} \|S_{k-1} \cos u \|_{H^{3\over 4}}^2 \| \Delta_{k} u \|_{H^{3\over 4}}^2\\
     \lesssim  \|\cos u \|_{H^{3 \over 4}}^2 \sum_{k} \| \Delta_{k} u \|_{H^{3\over 4}}^2
      \lesssim    \|\cos u \|_{H^{3 \over 4}}^2 \|u\|_{H^{3\over 4}}^2.
    \end{multline*}
    We conclude by using again that $$ \|\cos u \|_{H^{3 \over 4}} \lesssim 1 +  \|u\|_{H^{3\over 4}}.$$ 
    This concludes the proof.  
\end{proof}

\section*{Acknowledgement}
This project has received funding from the European Research Council (ERC) under the European Unions Horizon 2020 research and innovation programme (grant agreement No. 850941).

\end{document}